\newtheorem{theorem}{Theorem}
\newtheorem{proposition}{Proposition}[section]
\newtheorem{claim}{Claim}
\newtheorem{lemma}[proposition]{Lemma}
\newtheorem{corollary}[proposition]{Corollary}
\theoremstyle{definition}
\numberwithin{equation}{section}
\newenvironment{proofclaim}
                [1]
                [Proof of the claim]
                {\begin{proof}[#1]}
                {\end{proof}}
\newcommand{\N}{{\mathbb N}}
\newcommand{\Rset}{{\mathbb R}}
\newcommand{\Bset}{{\mathbb B}}
\newcommand{\st}{\;:\;}
\newcommand{\abs}[1]{\lvert #1 \rvert}
\newcommand{\bigabs}[1]{\bigl\lvert #1 \bigr\rvert}
\newcommand{\Bigabs}[1]{\Bigl\lvert #1 \Bigr\rvert}
\newcommand{\dualprod}[2]{\langle #1, #2 \rangle}
\newcommand{\bigscalprod}[2]{#1\cdot #2}
\newcommand{\scalprod}[2]{#1\cdot #2}
\newcommand{\dif}{\,\mathrm{d}}
\DeclareMathOperator{\supp}{supp}
\DeclareMathOperator{\dist}{dist}
\title{Nodal solutions for the Choquard equation}
\author{Marco Ghimenti}
\address{Universit\`a di Pisa\\
Dipartimento di Matematica\\
Largo B. Pontecorvo 5\\
56100 Pisa\\
Italy}
\email{marco.ghimenti@dma.unipi.it}
\author{Jean Van Schaftingen}
\address{Universit\'e Catholique de Louvain\\ 
Institut de Recherche en Math\'ematique et Phy\-sique\\ 
Chemin du Cyclotron 2 
bte L7.01.01\\ 1348 Louvain-la-Neuve \\ 
Belgium}
\email{Jean.VanSchaftingen@uclouvain.be}
\keywords{Stationary nonlinear Schr\"odinger--Newton equation; 
stationary Hartree equation;
nodal Nehari set;
concentration-compactness.}
\subjclass[2010]{35J91 (35J20)}
\date{January 29, 2016}
\begin{document}

\begin{abstract}
We consider the general Choquard equations
\begin{equation*}
 -\Delta u + u  = \bigl(I_\alpha \ast \abs{u}^p\bigr) \abs{u}^{p - 2} u 
\end{equation*}
where \(I_\alpha\) is a Riesz potential.
We construct minimal action odd solutions for \(p \in (\frac{N + \alpha}{N}, 
\frac{N + \alpha}{N - 2})\) and minimal action nodal solutions for \(p 
\in (2,\frac{N + \alpha}{N - 2})\).
We introduce a new minimax principle for least action nodal solutions and 
we develop new concentration-compactness lemmas for sign-changing Palais--Smale sequences.
The nonlinear Schr\"odinger equation, 
which is the nonlocal counterpart of the Choquard equation,
does not have such solutions.
\end{abstract}

\maketitle

\tableofcontents

\section{Introduction}
We study the general Choquard equation
\begin{equation}
\label{eqChoquard}
\tag{$\mathcal{C}$}
 -\Delta u + u  = \bigl(I_\alpha \ast \abs{u}^p\bigr) \abs{u}^{p - 2} u \quad\text{in \(\Rset^N\)},
\end{equation}
where \(N \ge 1\), \(\alpha \in (0, N)\) and \(I_\alpha : \Rset^N \to \Rset\) is the Riesz potential 
defined at each point \(x \in \Rset^N \setminus \{0\}\) by 
\begin{align*}
I_\alpha (x) &= \frac{A_\alpha}{\abs{x}^{N - \alpha}}, &
&\text{where }&
&A_\alpha = \frac{\Gamma(\tfrac{N-\alpha}{2})}
                   {\Gamma(\tfrac{\alpha}{2})\pi^{N/2}2^{\alpha} }.
\end{align*}
When \(N = 3\), \(\alpha = 2\) and \(p = 2\), the equation \eqref{eqChoquard} has appeared in several contexts of quantum physics and is known as the \emph{Choquard--Pekar equation} \citelist{\cite{Pekar1954}\cite{Lieb1977}}, the \emph{Schr\"odin\-ger--Newton equation} \cites{KRWJones1995gravitational,KRWJones1995newtonian,Moroz-Penrose-Tod-1998} and the \emph{stationary Hartree equation}.

The action functional \(\mathcal{A}\) associated to the Choquard equation 
\eqref{eqChoquard} is defined for each function \(u\) in the Sobolev space 
\(H^1 (\Rset^N)\) by 
\[
 \mathcal{A} (u) = \frac{1}{2} \int_{\Rset^N} \abs{\nabla u}^2 + \abs{u}^2 - \frac{1}{2 p} \int_{\Rset^N} \bigl(I_\alpha \ast \abs{u}^p\bigr) \abs{u}^p.
\]
In view of the Hardy--Littlewood--Sobolev inequality, which states that if \(s 
\in (1, \frac{N}{\alpha})\) then for 
every \(v \in L^s (\Rset^N)\), \(I_\alpha \ast v\in L^\frac{N s}{N - \alpha s} 
(\Rset^N)\) and
\begin{equation}
\label{eqHLS}
 \int_{\Rset^N} \abs{I_\alpha \ast v}^\frac{N s}{N - \alpha s} \le C 
\Bigl(\int_{\Rset^N} \abs{v}^s \Bigr)^\frac{N}{N - \alpha s},
\end{equation} (see for example \cite{LiebLoss2001}*{theorem 4.3}), and of the 
classical Sobolev embedding, the action functional \(\mathcal{A}\) is 
well-defined and continuously differentiable whenever
\[
 \frac{N - 2}{N + \alpha} \le \frac{1}{p} \le \frac{N}{N + \alpha}.
\]
A natural constraint for the equation is the \emph{Nehari constraint} \(\dualprod{\mathcal{A}' (u)}{u} = 0\) which leads to search for solutions by minimizing the action functional on the \emph{Nehari manifold}
\[
 \mathcal{N}_0 = \bigl\{u \in H^1 (\Rset^N) \setminus \{0\} \st \dualprod{\mathcal{A}' (u)}{u} = 0\}.
\]
The existence of such a solution has been proved when
\[
 \frac{N - 2}{N + \alpha} < \frac{1}{p} < \frac{N}{N + \alpha};
\]
these assumptions are optimal
\citelist{\cite{Lieb1977}\cite{Lions1980}\cite{MorVanSchaft13}}.

We are interested in the construction of \emph{nodal solutions} to 
\eqref{eqChoquard}, that is, solutions to \eqref{eqChoquard} that change sign.
The easiest way to construct such solutions is to impose an odd symmetry 
constraint. More precisely we consider the Sobolev space of odd functions
\begin{multline*}
 H^1_\mathrm{odd}(\Rset^N) 
 = \bigl\{u \in H^1 (\Rset^N)\st \text{for almost every \((x', x_N) \in 
\Rset^N\)},\\
    u(x', -x_N) = - u (x', x_N)\bigr\},
\end{multline*}
we define the odd Nehari manifold
\[
 \mathcal{N}_{\mathrm{odd}}
 = \mathcal{N}_{0} \cap H^1_\mathrm{odd} (\Rset^N)
\]
and the corresponding level 
\[
 c_\mathrm{odd} = \inf_{\mathcal{N}_{\mathrm{odd}}} \mathcal{A}.
\]
Our first result is that this level \(c_\mathrm{odd}\) is achieved.

\begin{theorem}
\label{theoremOdd}
If \(\frac{N - 2}{N + \alpha} < \frac{1}{p}< \frac{N}{N + \alpha}\), then there exists a weak solution \(u \in H^1_\mathrm{odd} (\Rset^N) \cap C^2 (\Rset^N)\) to the Choquard equation \eqref{eqChoquard} such that \(\mathcal{A} (u) = c_\mathrm{odd}\).
Moreover, \(u\) has constant sign on each of the half-spaces \(\Rset^N_+\) and \(\Rset^N_-\) and \(u\) is axially symmetric with respect to an axis perpendicular to \(\partial \Rset^N_+ = \Rset^{N - 1} \times \{0\}\).
\end{theorem}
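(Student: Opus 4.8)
The plan is to obtain $u$ as a minimiser of $\mathcal{A}$ on $\mathcal{N}_{\mathrm{odd}}$; the whole difficulty is compactness, which I would recover by symmetrising a minimising sequence and by establishing a strict binding inequality below twice the ground state level. I first record the Nehari reduction. Writing $D(u) = \int_{\Rset^N}(I_\alpha \ast \abs{u}^p)\abs{u}^p$, which depends only on $\abs{u}$, for every $u \in H^1_{\mathrm{odd}}(\Rset^N)\setminus\{0\}$ the map $t \mapsto \mathcal{A}(tu)$ has a unique maximiser $t_u > 0$ with $t_u u \in \mathcal{N}_{\mathrm{odd}}$, and on $\mathcal{N}_{\mathrm{odd}}$ one has $\mathcal{A}(u) = \frac{p-1}{2p}\int_{\Rset^N}(\abs{\nabla u}^2 + \abs{u}^2)$. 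Hence minimising $\mathcal{A}$ is equivalent to minimising the scale invariant quotient $\mathcal{Q}(u) = \bigl(\int_{\Rset^N}(\abs{\nabla u}^2+\abs{u}^2)\bigr)^p/D(u)$ over $H^1_{\mathrm{odd}}(\Rset^N)\setminus\{0\}$; I set $S = \inf_{H^1_{\mathrm{odd}}}\mathcal{Q}$ and $S_0 = \inf_{H^1}\mathcal{Q}$, so that $S \ge S_0$.

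Given a minimising sequence $(u_n)$, I would symmetrise in two steps, each lowering $\mathcal{Q}$. Since $D$ depends only on $\abs{u}$ and $\int_{\Rset^N}\abs{\nabla\abs{u}}^2 \le \int_{\Rset^N}\abs{\nabla u}^2$, replacing each $u_n$ by the odd extension of $\abs{u_n}$ restricted to $\Rset^N_+$ makes $u_n \ge 0$ on $\Rset^N_+$ without raising $\mathcal{Q}$. Next, rearranging $u_n(\,\cdot\,,x_N)$ on $\Rset^{N-1}$ by symmetric decreasing rearrangement for each height $x_N$ preserves the $L^2$ norm, does not increase the Dirichlet integral (P\'olya--Szeg\H{o}), and, because for each fixed $x_N - y_N$ the kernel $z' \mapsto I_\alpha(z',x_N - y_N)$ is symmetric decreasing on $\Rset^{N-1}$, does not decrease $D$ by the Riesz rearrangement inequality applied slicewise. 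Oddness is preserved, so I may assume the $u_n$ are axially symmetric about the $x_N$ axis, radially decreasing in $x'$, and nonnegative on $\Rset^N_+$.

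The heart of the matter is the strict inequality $c_{\mathrm{odd}} < 2c_0$, where $c_0 = \inf_{\mathcal{N}_0}\mathcal{A}$ is the ground state level, attained by a positive ground state $w$ that decays exponentially; equivalently $S < 2^{p-1}S_0$. I would test with the odd function $u_R$ equal to $w(\,\cdot\, - R e_N)$ on $\Rset^N_+$. As $R \to \infty$ the overlap of the two bumps in the $H^1$ inner product is exponentially small, whereas the attractive cross interaction $\int_{\Rset^N_+}\int_{\Rset^N_-} I_\alpha(x-y)\abs{u_R(x)}^p\abs{u_R(y)}^p \dif x \dif y$ is strictly positive and decays only like $R^{-(N-\alpha)}$. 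Thus $\int_{\Rset^N}(\abs{\nabla u_R}^2+\abs{u_R}^2) = 2\int_{\Rset^N}(\abs{\nabla w}^2+\abs{w}^2) + o(R^{-(N-\alpha)})$ while $D(u_R) = 2D(w) + c\,R^{-(N-\alpha)} + o(R^{-(N-\alpha)})$ with $c > 0$, and feeding this into $\mathcal{Q}$ gives $\mathcal{A}(t_{u_R}u_R) < 2c_0$ for $R$ large. Making these interaction estimates quantitative, through the exponential decay of $w$ and a sharp lower bound on the cross term, is the step I expect to be the main obstacle.

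For the passage to the limit I normalise $\int_{\Rset^N}(\abs{\nabla u_n}^2+\abs{u_n}^2) = 1$, so $D(u_n) \to 1/S$, and extract $u_n \weakto u$. Here the radial monotonicity in $x'$ is decisive: it pins the profiles to the axis and removes the invariance under translations in $x'$, the one symmetry that would otherwise cause a loss of compactness at the level $S$ itself. A Brezis--Lieb splitting of both $\int_{\Rset^N}(\abs{\nabla\,\cdot\,}^2+\abs{\,\cdot\,}^2)$ and of $D$ then yields, with $\lambda = \int_{\Rset^N}(\abs{\nabla u}^2+\abs{u}^2) \in [0,1]$, the inequality $1/S \le \lambda^p/S + (1-\lambda)^p/(2^{p-1}S_0)$, where the factor $2^{p-1}$ reflects that by oddness any escaping mass forms a mirror pair of noninteracting free bumps. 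Since the binding inequality is exactly $S < 2^{p-1}S_0$, this forces $\lambda = 1$; hence $u_n \to u$ strongly and $u$ attains $c_{\mathrm{odd}}$. Finally, by the principle of symmetric criticality $u$ is a weak solution of \eqref{eqChoquard} on $\Rset^N$, a Hardy--Littlewood--Sobolev bootstrap upgrades it to $C^2(\Rset^N)$, and $u$ inherits axial symmetry and $u \ge 0$ on $\Rset^N_+$ from the sequence; the strong maximum principle applied to $-\Delta u + u = (I_\alpha \ast \abs{u}^p)\,u^{p-1} \ge 0$ on $\Rset^N_+$ then gives $u > 0$ there and, by oddness, $u < 0$ on $\Rset^N_-$.
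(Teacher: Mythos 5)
Your overall architecture (minimize the quotient \(\mathcal{Q}\) over odd functions, symmetrize the minimizing sequence, prove the binding inequality \(c_{\mathrm{odd}} < 2c_0\), i.e.\ \(S < 2^{p-1}S_0\), and use it to exclude dichotomy) is coherent and genuinely different from the paper's route (Palais--Smale sequence from a minimax principle, concentration--compactness with translations along \(\Rset^{N-1}\times\{0\}\), and sign/axial symmetry recovered \emph{a posteriori} by polarization). However, your plan has a genuine gap at its acknowledged ``main obstacle'': you assume the ground state \(w\) decays exponentially, and this is \emph{false} on part of the theorem's range. For \(\frac{N+\alpha}{N} < p < 2\) the ground state decays only polynomially, \(w(x) = O\bigl(\abs{x}^{-(N-\alpha)/(2-p)}\bigr)\) (this is the decay result of Moroz and Van Schaftingen used in the paper), because \(I_\alpha \ast \abs{w}^p\) itself decays like \(\abs{x}^{-(N-\alpha)}\). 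Consequently the overlap/truncation errors in your two-bump test function are only polynomially small, and the entire content of the paper's proposition~\ref{propositionStrictInequality} is to check that they are nevertheless \(o(R^{-(N-\alpha)})\), which amounts to the inequality \(\frac{Np-2\alpha}{2-p} > N-\alpha\), i.e.\ \(p > \frac{2N}{2N-\alpha}\) --- true, but only because \(p > \frac{N+\alpha}{N}\). Without this computation your argument does not cover \(p<2\). A smaller defect in the same step: the odd extension of \(w(\cdot - Re_N)\vert_{\Rset^N_+}\) is not in \(H^1(\Rset^N)\), since \(w>0\) on \(\{x_N=0\}\) produces a jump across the hyperplane; you must first truncate (the paper uses \(\eta_R v\) supported in \(B_{2R}\), placed at distance \(2R\)) or take \(w(\cdot - Re_N) - w(\cdot + Re_N)\), whose bump overlaps are again only polynomially small when \(p<2\), so the issue above reappears.

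The second step that needs more than assertion is the splitting inequality \(1/S \le \lambda^p/S + (1-\lambda)^p/(2^{p-1}S_0)\). Brezis--Lieb gives \(D(u_n) = D(u) + D(u_n-u) + o(1)\), but the bound \(\limsup_n D(u_n - u) \le (1-\lambda)^p/(2^{p-1}S_0)\) requires proving that the interaction between the upper and lower half-space portions of the escaping mass vanishes; this is exactly the paper's estimate \eqref{eqCrossIntegralVanishing} in the proof of proposition~\ref{propositionOddPalaisSmaleCondition}, and it does not follow from oddness alone. Note also that your monotonicity argument pins \(u_n\) to the axis, but \(v_n = u_n - u\) is \emph{not} \(x'\)-monotone (a difference of monotone profiles need not be monotone), so ``the escaping mass forms a mirror pair of non-interacting bumps'' needs a genuine concentration argument (a slab decomposition as in the paper, or a profile decomposition adapted to oddness). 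If you repair these two points, your route does work, and it has a real payoff the paper's does not: the sign on \(\Rset^N_+\) and the axial symmetry of the limit are inherited directly from the symmetrized sequence, so the paper's separate polarization argument (proposition~\ref{propositionQualitative}) becomes unnecessary.
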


Nodal solutions with higher level of symmetries and thus larger action have 
already been constructed 
\citelist{\cite{CingolaniClappSecchi2012}\cite{CingolaniClappSecchi2013}\cite{
ClapSalazar2013}}.

The proof of theorem~\ref{theoremOdd} relies on two ingredients: a compactness 
property up to translation under the strict inequality \(c_\mathrm{odd} < 2 
c_0\) obtained by a concentration--compactness argument 
(proposition~\ref{propositionOddPalaisSmaleCondition}) and the proof of the 
latter strict inequality (proposition~\ref{propositionStrictInequality}).

\bigbreak

Another notion of solution is that of \emph{least action nodal solution}, which has been well studied for local problems \citelist{\cite{CeramiSoliminiStruwe1986}\cite{CastroCossioNeuberger1997}\cite{CastorCossioNeuberger1998}}. 
As for these local problems, we define the constrained Nehari nodal set (as in the local case,
in contrast with \(\mathcal{N}_0\) and \(\mathcal{N}_\mathrm{odd}\), 
the set \(\mathcal{N}_{\mathrm{nod}}\) \emph{is not a manifold}),
\begin{multline*}
 \mathcal{N}_{\mathrm{nod}} 
 =\bigl\{ u \in H^1 (\Rset^N) \st u^+ \ne 0 \ne u^-,\,\\
           \dualprod{\mathcal{A}'(u)}{u^+} = 0
           \text{ and } \dualprod{\mathcal{A}'(u)}{u^-} = 0\bigr\},
\end{multline*}
where \(u^+ = \max (u, 0) \ge 0\) and \(u^- = \min (u, 0) \le 0\). (In contrast 
with the local case, we have for every \(u \in \mathcal{N}_{\mathrm{nod}}\), 
\(\dualprod{\mathcal{A}'(u)}{u^+} < \dualprod{\mathcal{A}'(u^+)}{u^+}\), and thus \(u^+ \not \in \mathcal{N}_0\) and \(u^- \not \in \mathcal{N}_0\).)
We prove that when \(p > 2\), the associated level
\[
 c_\mathrm{nod} = \inf_{\mathcal{N}_{\mathrm{nod}}} \mathcal{A}
\]
is achieved.

\begin{theorem}
\label{theoremNod}
If \(\frac{N - 2}{N + \alpha} <  \frac{1}{p} < 
\frac{1}{2}\), then there exists a weak solution \(u \in H^1 (\Rset^N)\cap C^2 (\Rset^N)\) to the 
Choquard equation \eqref{eqChoquard}  such that \(\mathcal{A} (u) = 
c_\mathrm{nod}\), and \(u\) changes sign.
\end{theorem}

The restriction on the exponent \(p\) can only be satisfied when \(\alpha > N - 
4\).
We understand that \(u\) changes sign if the sets \(\{x \in \Rset^N \st u (x) > 
0 \}\) and \(\{x \in \Rset^N \st u(x) < 0 \}\) have both positive measure.


We do not know whether the solutions  constructed in theorem~\ref{theoremNod} are odd and coincide thus with those of theorem~\ref{theoremOdd} or even whether the solutions of theorem~\ref{theoremNod} have axial symmetry as those of theorem~\ref{theoremOdd}. We leave these questions as open problems.

The proof of theorem~\ref{theoremNod} is based on a new reformulation of the 
minimization problem as a minimax problem that allows to apply a minimax 
principle with location information 
(proposition~\ref{propositionSimpleVariational}) and a new compactness property 
up to translations under the condition \(c_{\mathrm{nod}} < 2 c_0\) proved by 
concentration--compactness 
(proposition~\ref{propositionNodalPalaisSmaleCondition}), in the proof of which 
we introduce suitable methods and estimates (see lemma~\ref{lemmaNewEstimate}).
The latter strict inequality is deduced from the inequality \(c_\mathrm{nod} 
\le c_{\mathrm{odd}}\).

Compared to theorem~\ref{theoremOdd}, theorem~\ref{theoremNod} introduces the additional restriction \(p > 2\). This assumption is almost optimal: in the locally sublinear case \(p < 2\), the level \(c_\mathrm{nod}\) is not achieved.

\begin{theorem}
\label{theoremDegeneracy}
If \(\max (\frac{N - 2}{N + \alpha}, \frac{1}{2}) < \frac{1}{p} < \frac{N}{N 
+ \alpha}\), then \(c_\mathrm{nod} = c_0\) is not achieved in 
\(\mathcal{N}_{\mathrm{nod}}\).
\end{theorem}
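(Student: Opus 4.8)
The plan is to prove the two inequalities $c_{\mathrm{nod}}\ge c_0$ and $c_{\mathrm{nod}}\le c_0$ separately, and then to rule out attainment. The first inequality is immediate: summing the two defining relations of $\mathcal N_{\mathrm{nod}}$ gives $\dualprod{\mathcal A'(u)}{u}=\dualprod{\mathcal A'(u)}{u^+}+\dualprod{\mathcal A'(u)}{u^-}=0$ for every $u\in\mathcal N_{\mathrm{nod}}$, so that $\mathcal N_{\mathrm{nod}}\subset\mathcal N_0$ and hence $\mathcal A(u)\ge c_0$ on $\mathcal N_{\mathrm{nod}}$, that is $c_{\mathrm{nod}}\ge c_0$.

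For the reverse inequality I would build an explicit minimizing sequence. Let $w$ be a positive ground state, so $w\in\mathcal N_0$, $\mathcal A(w)=c_0$, and $w$ is regular and decays exponentially. For $R$ large set $U_R=s_R\,w$ and $V_R=\tau_R\,w(\cdot-Re_1)$ with parameters $s_R,\tau_R>0$ to be chosen, and let $u_R=U_R-V_R$. Writing $m=\norm{w}^2=\int_{\Rset^N}\abs{\nabla w}^2+w^2=\int_{\Rset^N}(I_\alpha\ast w^p)w^p$ and the interaction
\[
  c(R)=\int_{\Rset^N}(I_\alpha\ast w^p)\,w^p(\cdot-Re_1)\sim R^{-(N-\alpha)},
\]
the requirement $u_R\in\mathcal N_{\mathrm{nod}}$, that is $\dualprod{\mathcal A'(u_R)}{u_R^{+}}=0=\dualprod{\mathcal A'(u_R)}{u_R^{-}}$, reduces, up to exponentially small corrections coming from the overlap of the two bumps (which is negligible against $c(R)$), to the system
\begin{align*}
  s_R^2 m &= s_R^{2p} m + s_R^p\tau_R^p\,c(R),\\
  \tau_R^2 m &= \tau_R^{2p} m + s_R^p\tau_R^p\,c(R).
\end{align*}
Dividing the second relation by $\tau_R^p$ yields $\tau_R^{2-p}m = \tau_R^p m + s_R^p\,c(R)$; here the sublinearity $p<2$ is decisive, since it forces $\tau_R\to0$ with $\tau_R^{2-p}\sim c(R)/m\to0$, whereas in the superlinear range $p>2$ the amplitude $\tau_R$ would stay bounded away from $0$ and the two separated bumps would produce action close to $2c_0$. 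Feeding this back into the first relation gives $s_R\to1$. A fixed point or degree argument near $(s,\tau)=(1,0)$ provides such a solution $(s_R,\tau_R)$ for all large $R$, and $u_R$ genuinely changes sign because $\tau_R$ is only polynomially small while $w(Re_1)$ is exponentially small. Since $u_R\in\mathcal N_0$ we have $\mathcal A(u_R)=(\tfrac12-\tfrac1{2p})\norm{u_R}^2=(\tfrac12-\tfrac1{2p})(s_R^2+\tau_R^2)m\to(\tfrac12-\tfrac1{2p})m=c_0$, whence $c_{\mathrm{nod}}\le c_0$ and therefore $c_{\mathrm{nod}}=c_0$.

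It remains to show that $c_0$ is not attained on $\mathcal N_{\mathrm{nod}}$. Suppose $u\in\mathcal N_{\mathrm{nod}}$ with $\mathcal A(u)=c_0$. By the inclusion above $u$ is then a minimizer of $\mathcal A$ on $\mathcal N_0$, hence a ground state and a weak solution of \eqref{eqChoquard}. Since $\norm{\abs u}=\norm{u}$ and $\int_{\Rset^N}(I_\alpha\ast\abs{\abs u}^p)\abs{\abs u}^p=\int_{\Rset^N}(I_\alpha\ast\abs u^p)\abs u^p$, the function $\abs u$ also lies in $\mathcal N_0$ with $\mathcal A(\abs u)=c_0$, so $\abs u$ is a nonnegative ground state and thus a weak, and by the regularity theory classical, solution. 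As $I_\alpha\ast\abs u^p>0$ everywhere, $\abs u$ satisfies $-\Delta\abs u+\abs u\ge0$, so the strong maximum principle gives $\abs u>0$ on $\Rset^N$; being continuous and nowhere zero, $u$ has constant sign, contradicting $u^+\ne0\ne u^-$. Hence $c_{\mathrm{nod}}=c_0$ is not achieved.

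The main obstacle is the construction of the second paragraph: one must verify that the two Nehari--nodal constraints can be solved simultaneously, controlling the genuinely nonlocal cross terms and the overlap of the two exponentially decaying bumps, and thereby isolate the balance $\tau_R^{2-p}\sim c(R)$ that is the analytic heart of the statement and that crucially uses $p<2$.
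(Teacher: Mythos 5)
Your first inequality (\(c_{\mathrm{nod}}\ge c_0\)) and your non-attainment argument are both correct and essentially the paper's (where you spell out a maximum-principle argument, the paper simply cites \cite{MorVanSchaft13}*{proposition 5.1} for the constant sign of groundstates). The genuine gap is in your second paragraph, the proof of \(c_{\mathrm{nod}}\le c_0\): you assume that the groundstate \(w\) ``decays exponentially'', but this theorem lives precisely in the regime \(p<2\), where groundstates decay only \emph{polynomially}, \(w(x)\sim\abs{x}^{-(N-\alpha)/(2-p)}\) as \(\abs{x}\to\infty\); the paper itself records this dichotomy (exponential decay only for \(p\ge 2\)) in the proof of proposition~\ref{propositionStrictInequality}, citing \cite{MorVanSchaft13}. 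This destroys the two pillars of your asymptotics. First, your balance gives \(\tau_R\sim(c(R)/m)^{1/(2-p)}\sim R^{-(N-\alpha)/(2-p)}\), which is \emph{exactly} the order of \(w(Re_1)\); thus whether \(u_R=s_Rw-\tau_Rw(\cdot-Re_1)\) is negative anywhere near \(Re_1\) (i.e.\ whether \(\tau_R w(0)>s_R w(Re_1)\)) is a comparison of two quantities of identical polynomial order, decided by constants you never estimate, so it is not even clear that \(u_R^-\ne0\), i.e.\ that \(u_R\) can lie in \(\mathcal{N}_{\mathrm{nod}}\) at all.

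Second, the ``overlap corrections'' you discard are not negligible: testing the equation for \(w\) against \(w(\cdot-Re_1)\),
\[
 \int_{\Rset^N}\nabla w\cdot\nabla w(\cdot-Re_1)+w\,w(\cdot-Re_1)
 =\int_{\Rset^N}\bigl(I_\alpha\ast w^p\bigr)\,w^{p-1}\,w(\cdot-Re_1)
 = O\Bigl(R^{-\frac{N-\alpha}{2-p}}\Bigr),
\]
and this order is sharp (split the integral according to \(\abs{x}<R/2\) and \(\abs{x-Re_1}<R/2\); the condition \(p>\frac{N+\alpha}{N}\) makes \(\int(I_\alpha\ast w^p)w^{p-1}\) and \(\int w\) finite). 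Hence the \(H^1\) cross term \(s_R\tau_R\,O(R^{-(N-\alpha)/(2-p)})\sim\tau_R^2\) enters the constraint \(\dualprod{\mathcal{A}'(u_R)}{u_R^-}=0\) at the \emph{same} order as the terms \(\tau_R^2m\) and \(s_R^p\tau_R^p\,c(R)\) that you keep, so your reduced system is wrong at leading order (and the identification \(u_R^-\approx-\tau_Rw(\cdot-Re_1)\) fails for the same reason). The paper avoids all of this with a different construction: by density it replaces the groundstate by a nonnegative, \emph{compactly supported} \(u\in\mathcal{N}_0\cap C^1_c(\Rset^N)\), and instead of translating a far-away bump it plants a \emph{concentrating} bump \(-\delta^{2/(2-p)}\varphi((\cdot-a)/\delta)\) at a fixed point \(a\notin\supp u\). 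The supports are then exactly disjoint, so \(u_\delta^+=u\) and \(u_\delta^-\) is exactly the bump, the two Nehari-nodal constraints become an exact \(2\times2\) system solvable by the implicit function theorem for small \(\delta\), and \(t_{+,\delta}u_\delta^++t_{-,\delta}u_\delta^-\to u\) strongly in \(H^1(\Rset^N)\) because \((N-2)(2-p)>-4\). If you insist on the two-bump route, you must redo every estimate with polynomial tails and verify the sign change — precisely the difficulties the compact-support trick eliminates.
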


Theorem~\ref{theoremDegeneracy} shows that minimizing the action on the Nehari nodal set does not provide a nodal solution; there might however exist a minimal action nodal solution that would be constructed in another fashion.

We do not answer in the present work whether \(c_\mathrm{nod}\) is achieved when 
\(p = 2\) and \(\alpha > N - 4\). In a forthcoming manuscript in collaboration with V.\thinspace{} Moroz, we extend theorem~\ref{theoremDegeneracy} to the case \(p = 2\) by taking the limit \(p \searrow 2\) \cite{GhimentiMorozVanSchaftingen}.

\bigbreak

If we compare the results in the present paper to well-established features of  
the \emph{stationary nonlinear Sch\"odinger equation}
\begin{equation}
\label{NLS}
 -\Delta u + u  = \abs{u}^{2p - 2} u,
\end{equation}
which is the local counterpart of the Choquard equation~\eqref{eqChoquard}, theorems~\ref{theoremOdd} and \ref{theoremNod} are quite surprising.
The action functional associated to \eqref{NLS} is defined by 
\[
 \mathcal{A} (u) 
 = \frac{1}{2} \int_{\Rset^N} \abs{\nabla u}^2 + \abs{u}^2 
   - \frac{1}{2 p} \int_{\Rset^N} \abs{u}^{2 p},
\]
which is well-defined and continuously differentiable when \(\frac{1}{2} - \frac{1}{N} < \frac{1}{p} < \frac{1}{2}\).
Since in this case \(\mathcal{A} (u) = \mathcal{A} (u^+) + \mathcal{A} (u^-)\), 
it can be easily proved by a density argument that 
\[
 c_\mathrm{odd} = c_{\mathrm{nod}} = 2 c_0.
\]
Therefore if one of the infimums \( c_\mathrm{odd} \) or \(c_{\mathrm{nod}}\) is 
achieved at \(u\), then both \(u^+\) and \(u^-\) should achieve \(c_0\) in 
\(\mathcal{N}_0\). This is impossible, since by the strong maximum principle 
\(u^+> 0\) and \(u^- > 0\) almost everywhere on the space \(\Rset^N\).
This nonexistence of minimal action nodal solutions also contrasts with 
theorem~\ref{theoremDegeneracy}: for the nonlocal problem \(c_{\mathrm{nod}}\) is too 
small to be achieved whereas for the local one this level is too large.

\section{Minimal action odd solution}

In this section we prove theorem~\ref{theoremOdd} about the existence of solutions under an oddness constraint. 

\subsection{Variational principle}

We first observe that the corresponding level \(c_{\mathrm{odd}}\) is positive.
\begin{proposition}[Nondegeneracy of the level]
\label{propositionOddNondegenerate}
If  \(\frac{N - 2}{N + \alpha} \le \frac{1}{p} \le \frac{N}{N + \alpha}\), then 
\[
  c_{\mathrm{odd}} > 0.
\]
\end{proposition}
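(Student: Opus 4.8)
The plan is to reduce the statement to a uniform lower bound on the \(H^1\)-norm of the elements of the Nehari manifold. Since \(\mathcal{N}_{\mathrm{odd}} \subseteq \mathcal{N}_0\), it suffices to argue on \(\mathcal{N}_0\). For \(u \in \mathcal{N}_0\) the Nehari constraint \(\dualprod{\mathcal{A}'(u)}{u} = 0\) reads
\[
 \int_{\Rset^N} \abs{\nabla u}^2 + \abs{u}^2 = \int_{\Rset^N} \bigl(I_\alpha \ast \abs{u}^p\bigr)\abs{u}^p,
\]
and substituting this identity into the action gives
\[
 \mathcal{A}(u) = \Bigl(\frac{1}{2} - \frac{1}{2p}\Bigr)\int_{\Rset^N} \abs{\nabla u}^2 + \abs{u}^2.
\]
Because the hypothesis \(\frac{1}{p} \le \frac{N}{N+\alpha} < 1\) forces \(p > 1\), the prefactor \(\frac{1}{2} - \frac{1}{2p}\) is strictly positive, so it is enough to bound \(\int_{\Rset^N} \abs{\nabla u}^2 + \abs{u}^2\) below by a positive constant on \(\mathcal{N}_0\).

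First I would estimate the nonlocal term from above. Applying the Hardy--Littlewood--Sobolev inequality \eqref{eqHLS} with \(v = \abs{u}^p\) and \(s = \frac{2N}{N+\alpha}\) (which satisfies \(s \in (1, \tfrac{N}{\alpha})\) since \(\alpha < N\)), together with H\"older's inequality in the symmetric exponent this choice produces, yields a constant \(C > 0\) with
\[
 \int_{\Rset^N} \bigl(I_\alpha \ast \abs{u}^p\bigr)\abs{u}^p \le C \Bigl(\int_{\Rset^N} \abs{u}^{\frac{2Np}{N+\alpha}}\Bigr)^{\frac{N+\alpha}{N}}.
\]
The two admissibility hypotheses \(\frac{N-2}{N+\alpha} \le \frac{1}{p} \le \frac{N}{N+\alpha}\) are exactly equivalent to \(2 \le \frac{2Np}{N+\alpha} \le \frac{2N}{N-2}\), so the exponent \(\frac{2Np}{N+\alpha}\) lies in the range of the Sobolev embedding \(H^1(\Rset^N) \hookrightarrow L^q(\Rset^N)\). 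Combining with Sobolev and using \(\frac{2Np}{N+\alpha}\cdot\frac{N+\alpha}{N} = 2p\), I obtain a constant \(C' > 0\) such that
\[
 \int_{\Rset^N} \bigl(I_\alpha \ast \abs{u}^p\bigr)\abs{u}^p \le C'\Bigl(\int_{\Rset^N} \abs{\nabla u}^2 + \abs{u}^2\Bigr)^p.
\]

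Finally I would combine the two estimates. Writing \(t = \int_{\Rset^N} \abs{\nabla u}^2 + \abs{u}^2 > 0\), the Nehari identity and the last display give \(t \le C' t^p\), hence \(t^{p-1} \ge 1/C'\); since \(p > 1\) this forces \(t \ge (1/C')^{1/(p-1)} > 0\). Therefore
\[
 c_{\mathrm{odd}} \ge \Bigl(\frac{1}{2} - \frac{1}{2p}\Bigr)\Bigl(\frac{1}{C'}\Bigr)^{\frac{1}{p-1}} > 0.
\]
I do not expect a genuine obstacle here: the argument is the standard observation that the Nehari manifold is bounded away from the origin, and the only point demanding care is matching the Hardy--Littlewood--Sobolev and Sobolev exponents, which is precisely what the admissibility range of \(p\) is designed to guarantee. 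I note also that oddness plays no role in this bound, so the same computation simultaneously shows \(c_0 > 0\).
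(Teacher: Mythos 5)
Your proof is correct, but it takes a different route from the paper's. The paper's entire proof is the inclusion argument you open with --- since \(\mathcal{N}_{\mathrm{odd}} \subset \mathcal{N}_0\) one has \(c_{\mathrm{odd}} \ge c_0\) --- followed by a citation of Moroz--Van Schaftingen for the known fact \(c_0 > 0\); it does not reprove that fact. You instead unpack it: the Nehari identity turns \(\mathcal{A}\) into \(\bigl(\tfrac12 - \tfrac1{2p}\bigr)\norm{u}_{H^1}^2\), and the chain Hardy--Littlewood--Sobolev (with \(s = \tfrac{2N}{N+\alpha}\)) plus H\"older plus Sobolev gives \(\int_{\Rset^N}(I_\alpha \ast \abs{u}^p)\abs{u}^p \le C' \norm{u}_{H^1}^{2p}\), so the constraint forces \(\norm{u}_{H^1}^2 \ge (1/C')^{1/(p-1)}\), a uniform positive bound on all of \(\mathcal{N}_0\). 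This is the standard ``Nehari set is bounded away from the origin'' argument, and every exponent check is right; it is essentially the argument behind the cited fact, and the same HLS-plus-Sobolev pairing reappears later in the paper (e.g.\ in the proofs of propositions~\ref{propositionOddPalaisSmaleCondition} and \ref{propositionNodalPalaisSmaleCondition}). What your version buys is self-containedness and an explicit quantitative lower bound \(c_{\mathrm{odd}} \ge \bigl(\tfrac12 - \tfrac1{2p}\bigr)(1/C')^{1/(p-1)}\); what the paper's version buys is brevity and consistency with its practice of quoting groundstate facts from the earlier work. One pedantic caveat: your claim that the hypotheses are ``exactly equivalent'' to \(2 \le \tfrac{2Np}{N+\alpha} \le \tfrac{2N}{N-2}\) only makes literal sense for \(N \ge 3\); for \(N \in \{1,2\}\) the left hypothesis is vacuous and the critical exponent \(\tfrac{2N}{N-2}\) is meaningless, but then \(H^1(\Rset^N)\) embeds into every \(L^q\) with \(q \in [2,\infty)\), so your argument still goes through unchanged.
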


\begin{proof}
Since \(\mathcal{N}_{\mathrm{odd}} = \mathcal{N}_0 \cap 
H^1_{\mathrm{odd}} 
(\Rset^N) \subset \mathcal{N}_0\) we have \(c_{\mathrm{odd}} \ge c_0\). The 
conclusion follows then from the fact that \(c_0 > 0\) \cite{MorVanSchaft13}.
\end{proof}

A first step in the construction of our solution is the existence
a Palais--Smale sequence.

\begin{proposition}[Existence of a Palais-Smale sequence]
\label{propositionOddPalaisSmaleExistence}
If \(\frac{N - 2}{N + \alpha} \le \frac{1}{p} \le \frac{N}{N + \alpha}\), then 
there exists a sequence \((u_n)_{n \in \N}\) in \(H^1_{\mathrm{odd}} 
(\Rset^N)\) such that, as \(n \to \infty\),
\begin{align*}
  \mathcal{A} (u_n) & \to c_{\mathrm{odd}} &
  &\text{ and} &
  \mathcal{A}' (u_n) & \to 0 \quad \text{in \(H^1_{\mathrm{odd}} (\Rset^N)'\)}.
\end{align*}
\end{proposition}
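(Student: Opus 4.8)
The plan is to obtain the sequence from Ekeland's variational principle applied to \(\mathcal{A}\) on the constraint \(\mathcal{N}_{\mathrm{odd}}\), and then to convert the resulting constrained Palais--Smale sequence into a free one by controlling the associated Lagrange multiplier. Write \(\norm{u}^2 = \int_{\Rset^N} \abs{\nabla u}^2 + \abs{u}^2\) and \(D (u) = \int_{\Rset^N} (I_\alpha \ast \abs{u}^p) \abs{u}^p\), so that \(G (u) := \dualprod{\mathcal{A}'(u)}{u} = \norm{u}^2 - D(u)\) and \(\mathcal{N}_{\mathrm{odd}} = \{u \in H^1_{\mathrm{odd}}(\Rset^N) \setminus \{0\} : G(u) = 0\}\). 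Since \(D\) is homogeneous of degree \(2p\), one has \(\dualprod{D'(u)}{u} = 2p\, D(u)\), whence on \(\mathcal{N}_{\mathrm{odd}}\), where \(\norm{u}^2 = D(u)\),
\[
 \dualprod{G'(u)}{u} = 2 \norm{u}^2 - 2p\, D(u) = 2 (1 - p) \norm{u}^2 .
\]
Because \(\tfrac{1}{p} < \tfrac{N}{N + \alpha} \le 1\) forces \(p > 1\), this quantity is strictly negative; in particular \(0\) is a regular value of \(G\) away from the origin, so that \(\mathcal{N}_{\mathrm{odd}}\) is a \(C^1\) manifold.

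First I would record two elementary bounds. Combining the Hardy--Littlewood--Sobolev inequality \eqref{eqHLS} with the Sobolev embedding gives \(D(u) \le C \norm{u}^{2p}\); on \(\mathcal{N}_{\mathrm{odd}}\) this reads \(\norm{u}^2 = D(u) \le C \norm{u}^{2p}\), and since \(2p > 2\) we obtain \(\norm{u} \ge \rho\) for some \(\rho > 0\) independent of \(u\). Thus \(\mathcal{N}_{\mathrm{odd}}\) is bounded away from the origin and is a complete metric space. Moreover, on \(\mathcal{N}_{\mathrm{odd}}\) the action reduces to \(\mathcal{A}(u) = (\tfrac{1}{2} - \tfrac{1}{2p}) \norm{u}^2\), so that \(\mathcal{A}\) is bounded below on \(\mathcal{N}_{\mathrm{odd}}\) by \(c_{\mathrm{odd}} > 0\) (proposition~\ref{propositionOddNondegenerate}) and, conversely, any sequence along which \(\mathcal{A}\) stays bounded has bounded \(H^1\) norm.

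With these preliminaries, Ekeland's variational principle applied to the \(C^1\) functional \(\mathcal{A}\) on the complete \(C^1\) manifold \(\mathcal{N}_{\mathrm{odd}}\) produces a sequence \((u_n)\) in \(\mathcal{N}_{\mathrm{odd}}\) with \(\mathcal{A}(u_n) \to c_{\mathrm{odd}}\) and with the restricted differential \(\mathcal{A}'(u_n)|_{T_{u_n} \mathcal{N}_{\mathrm{odd}}} \to 0\). By the two bounds above, \(\rho \le \norm{u_n} \le R\) for some \(R\). The main point is then to pass from the tangential estimate to the full estimate \(\mathcal{A}'(u_n) \to 0\) in \(H^1_{\mathrm{odd}}(\Rset^N)'\). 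The Lagrange multiplier rule provides \(\lambda_n \in \Rset\) with \(\mathcal{A}'(u_n) - \lambda_n G'(u_n) \to 0\) in \(H^1_{\mathrm{odd}}(\Rset^N)'\). Testing this relation against \(u_n\) and using \(\dualprod{\mathcal{A}'(u_n)}{u_n} = 0\) yields \(\lambda_n \dualprod{G'(u_n)}{u_n} = o(\norm{u_n}) = o(1)\); since \(\dualprod{G'(u_n)}{u_n} = 2(1 - p)\norm{u_n}^2 \le 2 (1 - p) \rho^2 < 0\) is bounded away from zero, we conclude \(\lambda_n \to 0\). As \(G'(u_n)\) stays bounded in \(H^1_{\mathrm{odd}}(\Rset^N)'\) (because \(\norm{u_n} \le R\)), it follows that \(\mathcal{A}'(u_n) = \lambda_n G'(u_n) + o(1) \to 0\), as required.

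The step I expect to be delicate is precisely this last upgrade: the conclusion hinges on the quantitative nondegeneracy \(\dualprod{G'(u_n)}{u_n} = 2(1-p)\norm{u_n}^2\) being bounded away from zero, which is exactly where the superquadratic condition \(p > 1\) enters, together with the two-sided control \(\rho \le \norm{u_n} \le R\). Finally, one should note that no invocation of the principle of symmetric criticality is needed here, since the statement only asserts smallness of \(\mathcal{A}'(u_n)\) as an element of the dual \(H^1_{\mathrm{odd}}(\Rset^N)'\); all the computations above are carried out intrinsically within the odd subspace, where \(\mathcal{N}_{\mathrm{odd}}\) is defined and where the Lagrange multiplier rule applies.
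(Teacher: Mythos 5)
Your proof is correct, but it follows a genuinely different route from the paper. The paper does not touch the Nehari manifold structure at all: it first rewrites \(c_{\mathrm{odd}}\) as a mountain-pass minimax level over paths \(\gamma \in C([0,1], H^1_{\mathrm{odd}}(\Rset^N))\) with \(\gamma(0) = 0\) and \(\mathcal{A}(\gamma(1)) < 0\) (citing \cite{Willem1996}*{theorem 4.2}, which exploits the homogeneity of the nonlinear term), and then invokes the general minimax principle \cite{Willem1996}*{theorem 2.8} to produce the Palais--Smale sequence in two lines. You instead work intrinsically on \(\mathcal{N}_{\mathrm{odd}}\): the identity \(\dualprod{G'(u)}{u} = 2(1-p)\norm{u}^2\) together with the lower bound \(\norm{u} \ge \rho\) (from Hardy--Littlewood--Sobolev and Sobolev) shows the constraint is a \(C^1\) manifold, complete and with \(\mathcal{A}\) bounded below on it, so Ekeland's principle applies; the same two quantitative facts then kill the Lagrange multiplier, since \(\dualprod{\mathcal{A}'(u_n)}{u_n} = 0\) exactly on the constraint while \(\dualprod{G'(u_n)}{u_n}\) stays away from zero. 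Your approach is more self-contained (no appeal to the equivalence of the Nehari and mountain-pass levels) and yields the slightly stronger conclusion that the sequence lies exactly on \(\mathcal{N}_{\mathrm{odd}}\) rather than merely having \(\mathcal{A}(u_n) \to c_{\mathrm{odd}}\); the paper's approach is shorter and sidesteps any discussion of manifold regularity and multipliers by quoting two ready-made theorems. Two cosmetic points: the hypothesis is the non-strict inequality \(\frac{1}{p} \le \frac{N}{N+\alpha}\), not \(\frac{1}{p} < \frac{N}{N+\alpha}\) as you wrote — harmless, since \(\frac{N}{N+\alpha} < 1\) still forces \(p > 1\) — and the passage from Ekeland's tangential estimate to the multiplier form \(\mathcal{A}'(u_n) - \lambda_n G'(u_n) \to 0\) deserves a word of justification (it uses that the splitting \(H^1_{\mathrm{odd}}(\Rset^N) = \ker G'(u_n) \oplus \Rset u_n\) is uniformly transversal, which is exactly what your bounds \(\rho \le \norm{u_n} \le R\) and \(\abs{\dualprod{G'(u_n)}{u_n}} \ge 2(p-1)\rho^2\) provide). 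You are also right that the symmetric criticality principle is not needed here; the paper only uses it in the proof of the Palais--Smale condition (proposition~\ref{propositionOddPalaisSmaleCondition}), where the conclusion \(\mathcal{A}'(u) = 0\) must hold in the full dual \(H^1(\Rset^N)'\).
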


\begin{proof}
We first recall that the level \(c_{\mathrm{odd}}\) can be rewritten as a mountain pass
minimax level:
\[
  c_{\mathrm{odd}} 
  = \inf_{\gamma \in \Gamma} \sup_{[0, 1]} \mathcal \mathcal{A} \circ \gamma,
\]
where the class of paths \(\Gamma\) is defined by 
\[
 \Gamma = \bigl\{ \gamma \in C \bigl([0, 1], H^1_\mathrm{odd} (\Rset^N)\bigr) \st \gamma (0)= 
0 \text{ and } \mathcal{A} \bigl(\gamma (1)\bigr) < 0\bigr\}
\]
(see for example \cite{Willem1996}*{theorem 4.2}).
By the general minimax principle 
\cite{Willem1996}*{theorem 2.8}, there exists a sequence \((u_n)_{n \in \N}\) 
in \(H^1_{\mathrm{odd}} (\Rset^N)\) such that the sequence
\(\bigl(\mathcal{A} (u_n)\bigr)_{n \in \N}\) converges to \(c_\mathrm{odd}\) 
and the 
sequence
\(\bigl(\mathcal{A}' (u_n)\bigr)_{n \in \N}\) converges strongly to \(0\) in 
the dual space \( H^1_{\mathrm{odd}} (\Rset^N) '\).
\end{proof}

\subsection{Palais--Smale condition}
We would now like to construct out of the Palais--Smale sequence of 
proposition~\ref{propositionOddPalaisSmaleExistence} a solution to our problem.
We shall prove that the functional \(\mathcal{A} \vert_{H^1_{\mathrm{odd}} 
(\Rset^N)}\) satisfies the Palais--Smale condition up to translations at the 
level \(c_{\mathrm{odd}}\) if the strict inequality \(c_\mathrm{odd} < 2 c_0\) 
holds.

\begin{proposition}[Palais--Smale condition]
\label{propositionOddPalaisSmaleCondition}
Assume that \(\frac{N - 2}{N + \alpha} < \frac{1}{p} < \frac{N}{N + 
\alpha}\).
Let \((u_n)_{n \in \N}\) be a sequence in \(H^1_{\mathrm{odd}} (\Rset^N)\) such 
that, as \(n \to \infty\),
\begin{align*}
  \mathcal{A} (u_n) & \to c_{\mathrm{odd}} &
  &\text{ and} &
  \mathcal{A}' (u_n) & \to 0 \quad \text{in \(H^1_{\mathrm{odd}} (\Rset^N)'\)}.
\end{align*}
If
\[
  c_{\mathrm{odd}} < 2 c_0,
\]
then there exists a sequence of points \((a_n)_{n \in \N}\) in \(\Rset^{N - 1} \times \{0\}\subset \Rset^N\) such 
that the subsequence
\((u_{n_k} \bigl(\cdot - a_{n_k})\bigr)_{k \in \N}\) converges strongly in 
\(H^1 
(\Rset^N)\) to \(u \in H^1_{\mathrm{odd}} (\Rset^N)\).
Moreover
\begin{align*}
  \mathcal{A} (u) &= c_{\mathrm{odd}} &
  &\text{ and }&
  \mathcal{A}'(u) &= 0 \quad \text{in \(H^1 (\Rset^N)'\)}.
\end{align*}
\end{proposition}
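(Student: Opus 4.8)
The plan is to run a concentration--compactness argument adapted to the odd symmetry and to the nonlocal nonlinearity. First I would establish that the sequence is bounded. Combining the two Palais--Smale conditions through the identity
\[
  \mathcal{A}(u_n) - \frac{1}{2p}\dualprod{\mathcal{A}'(u_n)}{u_n}
  = \Bigl(\frac{1}{2} - \frac{1}{2p}\Bigr)\int_{\Rset^N}\abs{\nabla u_n}^2 + \abs{u_n}^2,
\]
and using \(\mathcal{A}(u_n)\to c_{\mathrm{odd}}\), \(\mathcal{A}'(u_n)\to 0\) together with \(p>1\), one bounds \(\norm{u_n}_{H^1}\). Up to a subsequence \(u_n \weakto u\) in \(H^1(\Rset^N)\), and \(u\) is odd as a weak limit of odd functions. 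Passing to the limit in the equation is then a matter of controlling the nonlocal term: local compactness of the embedding \(H^1 \hookrightarrow L^q_{\mathrm{loc}}\) for subcritical \(q\), combined with the Hardy--Littlewood--Sobolev inequality \eqref{eqHLS}, lets me identify the weak limit of \((I_\alpha \ast \abs{u_n}^p)\abs{u_n}^{p-2}u_n\), so that \(u\) is a weak solution on the odd test functions; by Palais's principle of symmetric criticality (the action being invariant under \(u \mapsto -u(\cdot', -\cdot_N)\)) this upgrades to \(\mathcal{A}'(u)=0\) in the full dual \(H^1(\Rset^N)'\).

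Second, I would exclude vanishing. If \(\sup_{a \in \Rset^N}\int_{B_1(a)}\abs{u_n}^2 \to 0\), then Lions's lemma forces \(u_n \to 0\) in \(L^q(\Rset^N)\) for every \(q \in (2, 2^*)\), hence, again via \eqref{eqHLS}, the nonlinear energy \(\int_{\Rset^N}(I_\alpha \ast \abs{u_n}^p)\abs{u_n}^p \to 0\); testing against \(u_n\) then gives \(\norm{u_n}_{H^1}\to 0\) and \(\mathcal{A}(u_n)\to 0\), contradicting \(c_{\mathrm{odd}}>0\) (proposition~\ref{propositionOddNondegenerate}). So there are \(a_n\) and \(\delta>0\) with \(\int_{B_1(a_n)}\abs{u_n}^2 \ge \delta\). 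Since oddness gives \(\abs{u_n(x',-x_N)} = \abs{u_n(x',x_N)}\), the concentration function is even in the last coordinate of the center; this is what allows the translations to be taken in \(\Rset^{N-1}\times\{0\}\): setting \(\bar a_n = ((a_n)',0)\) and \(v_n = u_n(\cdot - \bar a_n)\) keeps the sequence odd and Palais--Smale.

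Third comes the crux, where the hypothesis \(c_{\mathrm{odd}} < 2c_0\) enters. By evenness in \(x_N\), the mass of \(v_n\) concentrating near \((0,(a_n)_N)\) is mirrored by equal mass near \((0,-(a_n)_N)\). I would distinguish two cases according to whether \((a_n)_N\) stays bounded. If \((a_n)_N \to \infty\), the two mirror bumps separate, and a decoupling estimate on the Choquard energy shows that the cross interaction through \(I_\alpha\) between the distant bumps is negligible while each bump carries action at least \(c_0 + o(1)\); thus \(\mathcal{A}(u_n) \ge 2c_0 + o(1)\), contradicting \(c_{\mathrm{odd}} < 2c_0\). Hence \((a_n)_N\) is bounded, \(v_n \weakto v \neq 0\), and \(v\) is a nonzero odd weak solution, so \(v \in \mathcal{N}_{\mathrm{odd}}\) and \(\mathcal{A}(v) \ge c_{\mathrm{odd}}\).

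Finally, strong convergence follows from a Brezis--Lieb splitting \(\mathcal{A}(v_n) = \mathcal{A}(v) + \mathcal{A}(v_n - v) + o(1)\), with \(v_n - v\) again a Palais--Smale sequence at level \(c_{\mathrm{odd}} - \mathcal{A}(v) \le 0\). A nonvanishing remainder would, after a further translation and the same mirror argument, produce additional positive action bounded below by \(c_0\) (or a symmetric pair worth \(2c_0\)), which is incompatible with a nonpositive limiting level; therefore \(v_n - v \to 0\) strongly in \(H^1(\Rset^N)\). This yields \(\mathcal{A}(v)=c_{\mathrm{odd}}\) and \(\mathcal{A}'(v)=0\). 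I expect the main obstacle to be precisely the decoupling step for the nonlocal term: unlike in the local case the energy does not split additively over disjoint supports, so quantifying that well-separated bumps interact negligibly through \(I_\alpha\) and each retains action at least \(c_0\) requires careful decay estimates, and the strict inequality \(c_{\mathrm{odd}} < 2c_0\) is exactly the margin that rules out such splitting.
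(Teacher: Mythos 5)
Your plan follows the same concentration--compactness strategy as the paper in outline: boundedness via \(\mathcal{A}(u_n) - \frac{1}{2p}\dualprod{\mathcal{A}'(u_n)}{u_n} = \bigl(\frac12 - \frac{1}{2p}\bigr)\norm{u_n}_{H^1}^2\), exclusion of vanishing, the observation that mass escaping from the symmetry hyperplane produces a mirror pair forcing level at least \(2c_0\), and identification of the limit through symmetric criticality, Rellich--Kondrashov and \eqref{eqHLS}. Your bounded-height case is correct, and it does deliver translations in \(\Rset^{N-1}\times\{0\}\) as the statement requires. Two remarks on your endgame: the paper does not need a nonlocal Brezis--Lieb splitting nor the (true but nontrivial for the Choquard term) claim that \(v_n - v\) is again Palais--Smale; it simply combines weak lower semicontinuity of \(\norm{\cdot}_{H^1}^2\) with the fact that the nonzero odd critical point \(u\) lies in \(\mathcal{N}_{\mathrm{odd}}\), so \(\mathcal{A}(u) \ge c_{\mathrm{odd}}\), and equality forces norm convergence, hence strong convergence.

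The genuine gap is exactly where you locate your ``main obstacle'': in the case \((a_n)_N \to \infty\) you assert that the cross interaction is negligible \emph{and} that ``each bump carries action at least \(c_0 + o(1)\)''. No decay estimate gives the second assertion as stated. A bump \(\eta(\cdot - a_n)u_n\) is not in \(\mathcal{N}_0\); to convert it into an action lower bound \(c_0\) you must rescale it onto the Nehari manifold with a factor \(t_n \to 1\), and that requires the bump to interact through \(I_\alpha\) essentially only with itself. But \(u_n\) may carry further mass adjacent to the bump, and this bump--rest interaction is not controlled by the separation \(2\abs{(a_n)_N} \to \infty\), which only handles the bump--mirror-bump term. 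The paper's device is to use no bumps at all: arguing by contradiction from vanishing on \emph{every} slab \(D_R = \Rset^{N-1}\times[-R,R]\), it splits \(u_n\) into the half-space restrictions \(v_n = \chi_{\{x_N>0\}}u_n\) and \(\Tilde{v}_n = \chi_{\{x_N<0\}}u_n\), which capture all of the mass so that there is no ``rest''; their cross term is bounded by a slab contribution (vanishing by hypothesis) plus a far-field contribution controlled by the pointwise bound \(\chi_{\Rset^N\setminus B_{2R}}I_\alpha \le C R^{\alpha - \beta} I_\beta\) with \(\beta \in (\alpha, (p-1)N)\) together with \eqref{eqHLS}; once the cross term vanishes, \(t_n v_n \in \mathcal{N}_0\) with \(t_n \to 1\) is legitimate and yields \(c_{\mathrm{odd}} \ge 2c_0\). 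This slab formulation has a second payoff: the Lions-type inequality localized to \(D_{3R/2}\) produces translation points lying in the hyperplane directly. Alternatively, your far case can be closed with no nonlocal decoupling whatsoever: translate fully, \(u_n(\cdot - a_n) \weakto w \ne 0\); by symmetric criticality \(\mathcal{A}'(u_n) \to 0\) in \(H^1(\Rset^N)'\) and this survives translation, so \(w\) is a critical point and \(\bigl(\frac12 - \frac{1}{2p}\bigr)\norm{w}_{H^1}^2 = \mathcal{A}(w) \ge c_0\); by oddness the mirror translates \(u_n\bigl(\cdot - ((a_n)', -(a_n)_N)\bigr)\) converge weakly to the reflected profile, and superadditivity of \(\norm{\cdot}_{H^1}^2\) under diverging translations gives \(c_{\mathrm{odd}} = \lim \bigl(\frac12 - \frac{1}{2p}\bigr)\norm{u_n}_{H^1}^2 \ge 2 c_0\), the desired contradiction -- the point being that the Palais--Smale identity lets the action be computed through the \emph{local} quantity \(\norm{u_n}_{H^1}^2\), which splits where the Choquard energy does not.
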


\begin{proof}
First, we observe that, as \(n \to \infty\),
\begin{equation}
\label{eqNorm}
\begin{split}
  \Bigl(\frac{1}{2} - \frac{1}{2 p}\Bigr)\int_{\Rset^N} \abs{\nabla u_n}^2 + 
\abs{u_n}^2 &= \mathcal{A} (u_n) - \frac{1}{2 p} \dualprod{\mathcal{A}' 
(u_n)}{u_n}\\
  &= \mathcal{A} (u_n) + o \biggl( \Bigl(\int_{\Rset^N} \abs{\nabla u_n}^2 + 
\abs{u_n}^2 \Bigr)^\frac{1}{2}\biggr)\\
& = c_{\mathrm{odd}} + o \biggl( \Bigl(\int_{\Rset^N} \abs{\nabla u_n}^2 + 
\abs{u_n}^2 \Bigr)^\frac{1}{2}\biggr).
\end{split}
\end{equation}
In particular, the sequence \((u_n)_{n \in \N}\) is bounded in the space \(H^1 (\Rset^N)\).

We now claim that there exists \(R > 0\) such that 
\begin{equation}
\label{eqDR}
  \liminf_{n \to \infty} \int_{D_R} \abs{u_n}^\frac{2 N p}{N + \alpha} > 0,
\end{equation}
where the set \(D_R \subset \Rset^N\) is the infinite slab
\[
 D_R = \Rset^{N - 1} \times [-R, R].
\]
We assume by contradiction that for each \(R > 0\),
\[
 \liminf_{n \to \infty} \int_{D_R} \abs{u_n}^\frac{2 N p}{N + \alpha} = 0.
\]
We define for each \(n \in \N\) the functions \(v_n = \chi_{\Rset^{N - 1} 
\times (0, \infty)} u_n\) and \(\Tilde{v}_n = \chi_{\Rset^{N - 1} \times 
(-\infty, 0)}u_n\). 
Since \(u_n \in H^1_{\mathrm{odd}}(\Rset^N)\), we have \(v_n \in H^1_0 (\Rset^{N - 1} 
\times (0, \infty))\subset H^1(\Rset^N)\) and \(\Tilde{v}_n  \in H^1_0 (\Rset^{N - 1}
\times (-\infty, 0)) \subset H^1(\Rset^N)\).
We now compute
\[
\begin{split}
  \int_{\Rset^N} \bigl(I_\alpha \ast \abs{v_n}^p\bigr) \abs{\Tilde{v}_n}^p
  \le &2 \int_{\Rset^N} \int_{D_R} I_\alpha (x - y) \abs{v_n (y)}^p 
\abs{\Tilde{v}_n (x)}^p \dif y\dif x\\
  &+\int_{\Rset^N \setminus D_R} \int_{\Rset^N \setminus D_R} I_\alpha (x - y) 
\abs{v_n (y)}^p \abs{\Tilde{v}_n (x)}^p \dif y\dif x\\
\end{split}
\]
By definition of the region \(D_R\) we have, if 
\(\beta \in (\alpha, N)\),
\begin{multline*}
  \int_{\Rset^N} \bigl(I_\alpha \ast \abs{v_n}^p\bigr) \abs{\Tilde{v}_n}^p\\
  \le 2 \int_{D_R} \bigl( I_\alpha \ast  
 \abs{u_n}^p\bigr) \abs{u_n}^p +\int_{\Rset^N} \bigl((\chi_{\Rset^N \setminus 
B_{2R}} I_\alpha) \ast  \abs{u_n}^p\bigr) \abs{u_n}^p\\
\le 2 \int_{D_R} \bigl( I_\alpha \ast  
 \abs{u_n}^p\bigr) \abs{u_n}^p +\frac{C}{R^{\beta - \alpha}} \int_{\Rset^N} 
\bigl((\chi_{\Rset^N \setminus B_{2R}} 
I_{\beta}) \ast  \abs{u_n}^p\bigr) \abs{u_n}^p
\end{multline*}
Since by assumption \(p > \frac{N + \alpha}{N}\), we can take \(\beta\) such that moreover \(\beta < (p - 1) N\), and then by the 
Hardy--Littlewood--Sobolev inequality \eqref{eqHLS} and the classical Sobolev 
inequality, we 
obtain that 
\begin{multline*}
 \int_{\Rset^N} \bigl(I_\alpha \ast \abs{v_n}^p\bigr) \abs{\Tilde{v}_n}^p
  \le C' \Bigl(\int_{\Rset^N} \abs{\nabla u_n}^2 + 
\abs{u_n}^2\Bigr)^\frac{p}{2} \Bigl(\int_{D_R} \abs{u_n}^\frac{2 N p}{N + 
\alpha}\Bigr)^\frac{N + \alpha}{2 N}\\ + 
\frac{C''}{R^{\beta - \alpha}} \Bigl(\int_{\Rset^N} \abs{\nabla u_n}^2 + 
\abs{u_n}^2\Bigr)^p,
\end{multline*}
from which we deduce that
\begin{equation}
\label{eqCrossIntegralVanishing}
  \lim_{n \to \infty}  
    \int_{\Rset^N} \bigl(I_\alpha \ast \abs{v_n}^p\bigr)
                 \abs{\Tilde{v}_n}^p 
   = 0.
\end{equation}
For each \(n \in \N\), we fix  \(t_n \in (0, \infty)\) so that 
\(t_n v_n \in \mathcal{N}_0\) or, equivalently, 
\begin{equation}
\label{eqTaun}
\begin{split}
  t_n^{2 p - 2} &= \frac{\displaystyle \int_{\Rset^N} \abs{\nabla v_n}^2 + 
\abs{v_n}^2}{\displaystyle \int_{\Rset^N} \bigl(I_\alpha \ast \abs{v_n}^p\bigr) 
\abs{v_n}^p}\\
  &= \frac{\displaystyle \int_{\Rset^N} \abs{\nabla u_n}^2 + 
\abs{u_n}^2}{\displaystyle \int_{\Rset^N} \bigl(I_\alpha \ast \abs{u_n}^p\bigr) 
\abs{u_n}^p - 2 \int_{\Rset^N} \bigl(I_\alpha \ast \abs{v_n}^p\bigr) 
\abs{\Tilde{v}_n}^p}.
\end{split}
\end{equation}
For every \(n \in \N\), we have 
\[
 \mathcal{A} (t_n u_n) = 2 \mathcal{A} (t_n v_n) 
- \frac{t_n^{2 p}}{p} \int_{\Rset^N} \bigl(I_\alpha \ast \abs{v_n}^p\bigr) 
\abs{\Tilde{v}_n}^p
\]
By \eqref{eqNorm}, \eqref{eqCrossIntegralVanishing} and \eqref{eqTaun}, in view of proposition~\ref{propositionOddNondegenerate}, we note that \(\lim_{n \to \infty} t_n = 1\) and thus in view 
of \eqref{eqCrossIntegralVanishing} again we conclude that
\[
  c_{\mathrm{odd}} = \lim_{n \to \infty} \mathcal{A} (u_n)  = \lim_{n \to 
\infty} \mathcal{A} (t_n u_n) = 2 \lim_{n \to \infty} \mathcal{A} (t_n 
v_n) \ge 2 c_0,
\]
in contradiction with the assumption \(c_{\mathrm{odd}} < 2 c_0\) of the 
proposition. 

We can now fix \(R > 0\) such that \eqref{eqDR} holds. We take a function \(\eta \in 
C^\infty (\Rset^N)\) such that \(\supp \eta \subset D_{3 R /2}\),  \(\eta = 1\) 
on \(D_R\), \(\eta \le 1\) on \(\Rset^N\) and \(\nabla 
\eta \in L^\infty (\Rset^N)\). 
We have the inequality \citelist{\cite{Lions1984CC2}*{lemma I.1}\cite{Willem1996}*{lemma 
1.21}\cite{MorVanSchaft13}*{lemma 2.3}\cite{VanSchaftingen2014}*{(2.4)}}
\[
\begin{split}
  \int_{D_R} \abs{u_n}^{\frac{2 N p}{N + \alpha}} &\le \int_{\Rset^N} \abs{\eta 
u_n}^{\frac{2 N p}{N + \alpha}}\\
  &\le C \Bigl(\sup_{a \in \Rset^N} \int_{B_{R/2} (a)} \abs{\eta u_n}^{\frac{2 
N p}{N + \alpha}} \Bigr)^{1 - \frac{N + \alpha}{N p}}
   \int_{\Rset^N} \abs{\nabla (\eta u_n)}^2 + \abs{\eta u_n}^2\\
  &\le C' \Bigl(\sup_{a \in \Rset^{N - 1} \times \{0\}} \int_{B_{2R} (a)} 
\abs{u_n}^{\frac{2 N p}{N + \alpha}} \Bigr)^{1 - \frac{N + \alpha}{N p}} 
\int_{\Rset^N} \abs{\nabla u_n}^2 + \abs{u_n}^2.
 \end{split}
\]
Since the sequence \((u_n)_{n \in \N}\) is bounded in the space \(H^1 (\Rset^N)\) we 
deduce from \eqref{eqDR} that there exists
a sequence of points \((a_n)_{n \in \N}\) in the hyperplane \(\Rset^{N - 1} \times \{0\}\) such that 
\[
  \liminf_{n \to \infty} \int_{B_{2 R} (a_n)} \abs{u_n}^\frac{2 N p}{N + 
\alpha} > 0.
\]
Up to translations and a subsequence, we can assume that the sequence 
\((u_n)_{n \in \N}\) converges weakly in \(H^1 
(\Rset^N)\) to a function \(u \in H^1 
(\Rset^N)\). 

Since the action functional \(\mathcal{A}\) is invariant under odd reflections, 
we note that for every \(n \in \N\), \(\mathcal{A} (u_n) = 0\) on 
\(H^1_{\mathrm{odd}} (\Rset^N)^\perp\) by the symmetric criticality principle 
\cite{Palais1979} (see also~\cite{Willem1996}*{theorem 1.28}). This allows to 
deduce from the strong convergence of the sequence \((\mathcal{A}' (u_n))_{n 
\in \N}\) to \(0\) in \(H^1_{\mathrm{odd}} (\Rset^N)'\) the strong 
convergence to \(0\) of the sequence \((\mathcal{A}' (u_n))_{n \in \N}\) in 
\(H^1 (\Rset^N)'\).

For any test function \(\varphi \in C^1_c (\Rset^N)\), by the weak 
convergence of the sequence \((u_n)_{n \in \N}\), we first have
\[
  \lim_{n \to \infty} \int_{\Rset^N} \scalprod{\nabla u_n}{\nabla \varphi} + 
u_n \varphi = \int_{\Rset^N} \scalprod{\nabla u}{\nabla \varphi} + u \varphi.
\]
By the classical Rellich--Kondrashov compactness theorem, the sequence 
\((\abs{u_n}^p)_{n \in \N}\) converges locally in measure to \(\abs{u}^p\) and by the 
Sobolev inequality, this sequence is bounded in \(L^\frac{2 N}{N + \alpha} 
(\Rset^N)\). Therefore, the sequence \((\abs{u_n}^p)_{n \in \N}\) converges 
weakly to \(\abs{u}^p\) in \(L^\frac{2 N}{N + \alpha} (\Rset^N)\) (see for 
example 
\citelist{\cite{Bogachev2007}*{proposition~4.7.12}\cite{Willem2013}*{
proposition 5.4.7}}), and, by the Hardy--Littlewood--Sobolev inequality \eqref{eqHLS}, the 
sequence \((I_\alpha \ast \abs{u_n}^p)_{n \in \N}\) converges weakly in 
\(L^{\frac{2 N}{N - \alpha}} (\Rset^N)\) to \(I_\alpha \ast \abs{u}^p\).
By the Rellich--Kondrashov theorem again, the sequence \(((I_\alpha \ast 
\abs{u_n}^p)\abs{u_n}^{p - 2} 
u_n)_{n \in \N}\) converges weakly in \(L^\frac{2 N}{N + 2} (K)\) for every 
compact set \(K \subset \Rset^N\).
Therefore we have
\[
  \lim_{n \to \infty} \int_{\Rset^N} (I_\alpha \ast \abs{u_n}^p)\abs{u_n}^{p - 
2} u_n \varphi = \int_{\Rset^N} \bigl(I_\alpha \ast \abs{u}^p\bigr)\abs{u}^{p - 2} u 
\varphi. 
\]
We have thus proved that  
\[
   \mathcal{A}' (u) = 0 = \lim_{n \to \infty} \mathcal{A}' (u_n).
\]
Finally, we have 
\[
\begin{split}
\lim_{n \to \infty} \mathcal{A} (u_n) &= \lim_{n \to \infty} \mathcal{A} (u_n) 
- \frac{1}{2 p} \dualprod{\mathcal{A}' (u_n)}{u_n}\\
& =\lim_{n \to \infty} \Bigl(\frac{1}{2} - \frac{1}{2 p} \Bigr)\int_{\Rset^N} 
\abs{\nabla u_n}^2 + \abs{u_n}^2\\
&\ge \Bigl(\frac{1}{2} - \frac{1}{2 p} \Bigr)\int_{\Rset^N} \abs{\nabla u}^2 + 
\abs{u}^2\\
&=\mathcal{A} (u) - \frac{1}{2 p} \dualprod{\mathcal{A}' (u)}{u} = \mathcal{A} (u),
\end{split}
\]
from which we conclude that \(\mathcal{A} (u) = c_{\mathrm{odd}}\) and that 
the sequence \((u_n)_{n \in \N}\) converges strongly to \(u\) in \(H^1 
(\Rset^N)\).
\end{proof}

\subsection{Strict inequality}

It remains now to establish the strict inequality \(c_\mathrm{odd} < 2 c_0\).

\begin{proposition}
\label{propositionStrictInequality}
If \(\frac{N - 2}{N + \alpha} < \frac{1}{p} < \frac{N}{N + \alpha}\), then 
\[
  c_{\mathrm{odd}} < 2 c_0.
\]
\end{proposition}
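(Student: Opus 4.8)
\section*{Proof proposal}

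The plan is to bound \(c_{\mathrm{odd}}\) from above by a one-parameter family of odd test functions built from two copies of the positive ground state translated in opposite directions along the \(x_N\)-axis, and to show that the \emph{nonlocal} interaction between the two bumps pushes the action strictly below \(2 c_0\). First I would recall that \(c_0\) is attained by a ground state \(w \in H^1 (\Rset^N)\) which may be taken positive, radially symmetric, and exponentially decaying (this is classical; see \cite{MorVanSchaft13}); since \(w \in \mathcal{N}_0\) we have the identity \(\int_{\Rset^N} \abs{\nabla w}^2 + w^2 = \int_{\Rset^N} (I_\alpha \ast w^p) w^p =: m\), so that \(c_0 = (\frac{1}{2} - \frac{1}{2p}) m\). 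For \(\ell > 0\), writing \(e_N = (0, \dots, 0, 1)\), I would set
\[
  u_\ell (x) = w (x - \ell e_N) - w (x + \ell e_N).
\]
Because \(w\) is even in \(x_N\), one checks that \(u_\ell \in H^1_{\mathrm{odd}} (\Rset^N)\), and \(t u_\ell\) is odd for every \(t > 0\); choosing \(t_\ell > 0\) with \(t_\ell u_\ell \in \mathcal{N}_0\) yields \(t_\ell u_\ell \in \mathcal{N}_{\mathrm{odd}}\), whence \(c_{\mathrm{odd}} \le \max_{t > 0} \mathcal{A} (t u_\ell)\).

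Next I would reduce everything to two scalar quantities. Setting \(Q (v) = \int_{\Rset^N} \abs{\nabla v}^2 + v^2\) and \(D (v) = \int_{\Rset^N} (I_\alpha \ast \abs{v}^p) \abs{v}^p\), a one-variable maximization of \(t \mapsto \mathcal{A} (tv) = \frac{t^2}{2} Q (v) - \frac{t^{2p}}{2p} D (v)\) gives the explicit value
\[
  \max_{t > 0} \mathcal{A} (t v) = \Bigl( \frac{1}{2} - \frac{1}{2p} \Bigr) \frac{Q (v)^{\frac{p}{p - 1}}}{D (v)^{\frac{1}{p - 1}}},
\]
so that it suffices to estimate \(Q (u_\ell)\) and \(D (u_\ell)\). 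Writing \(w_1 = w (\cdot - \ell e_N)\) and \(w_2 = w (\cdot + \ell e_N)\), I would use the exponential decay of \(w\) together with the equation satisfied by \(w\) to show that the quadratic cross terms \(\int_{\Rset^N} \scalprod{\nabla w_1}{\nabla w_2} + w_1 w_2\) are positive but of order \(e^{-\mu \ell}\) for some \(\mu > 0\), so that \(Q (u_\ell) = 2 m - O (e^{-\mu \ell})\).

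For the nonlocal term I would invoke the pointwise bound \(\bigabs{\abs{w_1 - w_2}^p - w_1^p - w_2^p} \le C (w_1^{p - 1} w_2 + w_1 w_2^{p - 1})\) to reduce to
\[
  D (u_\ell) = \int_{\Rset^N} \bigl(I_\alpha \ast (w_1^p + w_2^p)\bigr)(w_1^p + w_2^p) + O (e^{-\mu \ell}) = 2 m + 2 \int_{\Rset^N} (I_\alpha \ast w_1^p) w_2^p + O (e^{-\mu \ell}).
\]
The decisive point is that, although \(w^p\) decays exponentially, the Riesz kernel has the long-range behaviour \(I_\alpha (x - y) \sim A_\alpha (2 \ell)^{-(N - \alpha)}\) when \(y\) lies near \(\ell e_N\) and \(x\) near \(-\ell e_N\), so the cross interaction is strictly positive and only polynomially small,
\[
  2 \int_{\Rset^N} (I_\alpha \ast w_1^p) w_2^p = \frac{2 A_\alpha}{(2 \ell)^{N - \alpha}} \Bigl( \int_{\Rset^N} w^p \Bigr)^2 (1 + o (1)) > 0.
\]

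Inserting these expansions into the explicit formula and using \(Q (w) = D (w) = m\), I would obtain \(\max_{t > 0} \mathcal{A} (t u_\ell) = 2 c_0 - \kappa\, \ell^{-(N - \alpha)} + o (\ell^{-(N - \alpha)})\) for some \(\kappa > 0\), so that \(c_{\mathrm{odd}} \le \max_{t > 0} \mathcal{A} (t u_\ell) < 2 c_0\) once \(\ell\) is large enough. The heart of the argument, and the step I expect to be most delicate, is exactly the asymptotic analysis of \(D (u_\ell)\): one must verify that the polynomially small, \emph{positive} cross interaction coming from the long-range tail of \(I_\alpha\) genuinely dominates the exponentially small errors produced both by the quadratic cross terms and by the discrepancy between \(\abs{w_1 - w_2}^p\) and \(w_1^p + w_2^p\). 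This long-range attraction is precisely the nonlocal feature absent from the local equation \eqref{NLS}, and it is what forces the strict inequality \(c_{\mathrm{odd}} < 2 c_0\) rather than the equality \(c_{\mathrm{odd}} = 2 c_0\) of the local case.
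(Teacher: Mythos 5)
Your test function and overall strategy are essentially those of the paper: two far-apart copies of the ground state with opposite signs along the \(x_N\)-axis, projection onto the odd Nehari manifold via the explicit formula for \(\max_{t>0}\mathcal{A}(tv)\), and the key point that the long-range Riesz interaction contributes a positive term of order \(\ell^{-(N-\alpha)}\) which must dominate all errors. The genuine gap is your blanket use of exponential decay of the ground state \(w\). By the decay asymptotics of \cite{MorVanSchaft13}*{theorem 4}, which the paper invokes at precisely this step, \(w\) decays exponentially only when \(p \ge 2\); in the range \(\frac{N+\alpha}{N} < p < 2\), which is nonempty and allowed by the hypotheses of the proposition, the ground state decays only polynomially, \(w(x) = O\bigl(\abs{x}^{-(N-\alpha)/(2-p)}\bigr)\) as \(\abs{x} \to \infty\), because the long-range term \(I_\alpha \ast \abs{w}^p\) governs the behaviour of the equation at infinity. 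Hence your \(O(e^{-\mu\ell})\) bounds are false in that range. For the quadratic form this is harmless: writing \(w_1 = w(\cdot - \ell e_N)\) and \(w_2 = w(\cdot + \ell e_N)\), the cross term \(\int_{\Rset^N} \scalprod{\nabla w_1}{\nabla w_2} + w_1 w_2 = \int_{\Rset^N} \bigl(I_\alpha \ast w_1^p\bigr) w_1^{p-1} w_2\) is positive, so \(Q(u_\ell) \le 2m\) comes for free. But your expansion of \(D(u_\ell)\) genuinely needs the error terms \(\int_{\Rset^N} \bigl(I_\alpha \ast (w_1^p + w_2^p)\bigr)\bigl(w_1^{p-1} w_2 + w_1 w_2^{p-1}\bigr)\) to be \(o(\ell^{-(N-\alpha)})\), and with only polynomial decay this becomes a quantitative comparison of exponents (of the type \(\frac{N-\alpha}{2-p}\), \(\frac{(N-\alpha)(p-1)}{2-p}\), \(\frac{Np - 2\alpha}{2-p}\) against \(N - \alpha\)) that your proposal does not carry out; it is exactly where inequalities hidden in the hypothesis, such as \(p > \frac{N+\alpha}{N} > \frac{2N}{2N - \alpha}\), must enter. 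As written, your argument establishes the proposition only for \(p \ge 2\).

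You could close the gap in two ways. Either keep your ansatz and redo the error analysis with the correct polynomial rate (this can be made to work, since \(\frac{N-\alpha}{2-p} > N - \alpha\) for \(p > 1\) and \(\frac{N-\alpha}{2-p} > N\) for \(p > \frac{N+\alpha}{N}\), but it requires care near the lower critical exponent), or adopt the paper's device, which avoids overlap errors in the nonlocal term altogether: truncate first, replacing \(v\) by \(\eta_R v\) with \(\supp \eta_R \subset B_{2R}\), and translate the two truncated bumps by \(\pm 2R e_N\) so that their supports are disjoint up to a null set. Then \(\abs{u_R}^p\) is exactly the sum of the two translated copies of \(\abs{\eta_R v}^p\), no pointwise inequality for \(\abs{w_1 - w_2}^p\) is needed, the nonlocal cross term is a pure gain of order \(R^{-(N-\alpha)}\), and the only errors are truncation tails such as \(\int_{\Rset^N \setminus B_R} \abs{v(x)}^p \abs{x}^{\alpha - N} \dif x\) and \(R^{-2} \int_{B_{2R} \setminus B_R} \abs{v}^2\), which the paper checks are \(o(R^{-(N-\alpha)})\) separately in the polynomial (\(p < 2\)) and exponential (\(p \ge 2\)) decay regimes.
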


\begin{proof}
It is known that the Choquard equation has a least action solution 
\cite{MorVanSchaft13}.
More precisely, there exists \(v \in H^1 (\Rset^N) \setminus \{0\}\) such that 
\(\mathcal{A}' (v)= 0\)
and 
\[
 \mathcal{A}(v) = \inf_{\mathcal{N}_0} \mathcal{A}.
\]

We take a function \(\eta \in C^2_c (\Rset^N)\) such that \(\eta = 1\) on 
\(B_1\), \(0 \le \eta \le 1\) on \(\Rset^N\) and \(\supp \eta \subset B_2\) and 
we define 
for each \(R > 0\) the function \(\eta_R \in C^2_c (\Rset^N)\) for every 
\(x \in \Rset^N\) by 
\(\eta_R (x) = \eta (x/R)\).
We define now the function \(u_R : \Rset^N \to \Rset\) for each \(x = (x', x_N) 
\in \Rset^N\) by
\[
  u_{R} (x) =(\eta_R  v) (x', x_N - 2R) - (\eta_R  v) (x', - x_N - 2R).
\]
It is clear that \(u_R \in H^1_{\mathrm{odd}} (\Rset^N)\).

We observe that \(\dualprod{\mathcal{A}'(t_R u_R)}{t_R u_R}=0\) if and only if 
\(t_R \in (0, \infty)\) satisfies
\[
{\displaystyle t_{R}^{2p-2}=\frac{\displaystyle \int_{\Rset^{N}}\abs{\nabla 
u_{R}}^{2}+\abs{u_{R}}^{2}}{\displaystyle 
\int_{\Rset^{N}}\bigl(I_{\alpha}\ast\abs{ u_ { R } } ^ { p } 
\bigr)\abs{u_{R}}^{p}}}.
\]
Such a \(t_R\) always exists and 
\[
 \mathcal{A} (t_R u_R) = 
 \Bigl(\frac{1}{2} - \frac{1}{p}\Bigr) \frac{\displaystyle \Bigl(\int_{\Rset^N} 
\abs{\nabla u_R}^2 + \abs{u_R}^2 \Bigr)^\frac{p}{p - 1}}
 {\displaystyle \Bigl(\int_{\Rset^N} (I_\alpha \ast \abs{u_R}^p) \abs{u_R}^p 
\Bigr)^\frac{1}{p - 1}}.
\]
The proposition will follow once we have established that for some \(R >0\)
\begin{equation}
\label{eqReformulatedStrict}
 \frac{\displaystyle \Bigl(\int_{\Rset^N} \abs{\nabla u_R}^2 + \abs{u_R}^2 
\Bigr)^\frac{p}{p - 1}}
 {\displaystyle \Bigl(\int_{\Rset^N} (I_\alpha \ast \abs{u_R}^p) \abs{u_R}^p 
\Bigr)^\frac{1}{p - 1}}
 < 2 \frac{\displaystyle \Bigl(\int_{\Rset^N} \abs{\nabla v}^2 + \abs{v}^2 
\Bigr)^\frac{p}{p - 1}}
 {\displaystyle \Bigl(\int_{\Rset^N} (I_\alpha \ast \abs{v}^p) \abs{v}^p 
\Bigr)^\frac{1}{p - 1}}.
\end{equation}

We begin by estimating the denominator in the 
left-hand side of \eqref{eqReformulatedStrict}. We 
first observe that, by construction of the function \(u_R\)
\[
\int_{\Rset^{N}}\bigl(I_{\alpha}\ast\abs{u_{R}}^{p}\bigr)\abs{u_{R}}^{p}\ge 
2\int_{\Rset^{N}}\bigl(I_{\alpha}\ast\abs{\eta_{R}v}^{p}\bigr)\abs{\eta_{R}v}^{p
}+ 2\frac{A_\alpha}{(4R)^{N - \alpha}} \Bigl(\int_{\Rset^N} \abs{\eta_R v 
}^p\Bigr)^2.
\]
For the first term, we have 
\[
\begin{split}
\int_{\Rset^{N}}\bigl(I_{\alpha}\ast\abs{\eta_{R}v}^{p}\bigr)\abs{\eta_{R}v}^{p}
&=\int_{\Rset^{N}}\bigl(I_{\alpha}\ast\abs{v}^{p}\bigr)\abs{v}^{p} - 
2\int_{\Rset^{N}}\bigl(I_{\alpha}\ast\abs{v}^{p}\bigr)(1 - 
\eta_R^p)\abs{v}^{p}\\
&\qquad \qquad + \int_{\Rset^{N}}\bigl(I_{\alpha}\ast(1 - 
\eta_R^p)\abs{v}^{p}\bigr)(1 - \eta_R^p)\abs{v}^{p}\\
&\ge \int_{\Rset^{N}}\bigl(I_{\alpha}\ast\abs{v}^{p}\bigr)\abs{v}^{p} - 2
\int_{\Rset^{N}}\bigl(I_{\alpha}\ast \abs{v}^{p}\bigr)(1 - \eta_R^p) 
\abs{v}^{p}.
\end{split}
\]
By the asymptotic properties of \(I_\alpha \ast 
\abs{v}^p\) \cite{MorVanSchaft13}*{theorem 4}, we have 
\[
 \lim_{\abs{x} \to \infty} \frac{\bigl(I_{\alpha}\ast 
\abs{v}^{p}\bigr)}{I_{\alpha} (x)} = \int_{\Rset^N} \abs{v}^p,
\]
so that 
\[
 2\int_{\Rset^{N}}\bigl(I_{\alpha}\ast \abs{v}^{p}\bigr)(1 - \eta_R^p) 
\abs{v}^{p}
 \le C \int_{\Rset^N \setminus B_R} \frac{\abs{v (x)}^p}{\abs{x}^{N - 
\alpha}}\dif x.
\]
We have thus 
\begin{multline*}
 \int_{\Rset^{N}}\bigl(I_{\alpha}\ast\abs{u_{R}}^{p}\bigr)\abs{u_{R}}^{p}\\
 \ge 2 \int_{\Rset^N} 
\bigl(I_{\alpha}\ast\abs{v}^{p}\bigr)\abs{v}^{p}
 + \frac{2 A_\alpha}{(4R)^{N - \alpha}} \Bigl(\int_{B_R} \abs{v }^p\Bigr)^2
 - C \int_{\Rset^N \setminus B_R} \frac{\abs{v (x)}^p}{\abs{x}^{N - 
\alpha}}\dif x.
\end{multline*}
We now use the information that we have on the decay of the least action 
solution \(v\) \cite{MorVanSchaft13}.
If \(p < 2\), then \(v (x) = O (\abs{x}^{-(N - \alpha)/(2 - p)})\) as \(\abs{x} 
\to \infty\) and 
\[
  \int_{\Rset^N \setminus B_R}  \frac{\abs{v (x)}^p}{\abs{x}^{N - 
\alpha}}\dif x = O \biggl(\frac{1}{R^\frac{Np 
- 2 \alpha}{2 - p}} \biggr) = o \Bigl(\frac{1}{R^{N - \alpha}} \Bigr),
\]
since \(p > \frac{N + \alpha}{N} >  \frac{2N}{2N - \alpha}\).
If \(p \ge 2\), then \(v\) decays exponentially at infinity. We have thus the 
asymptotic lower bound
\begin{multline}
\label{eqDenominator}
 \int_{\Rset^{N}}\bigl(I_{\alpha}\ast\abs{u_{R}}^{p}\bigr)\abs{u_{R}}^{p}\\ \ge 
2 \int_{\Rset^N} 
\bigl(I_{\alpha}\ast\abs{v}^{p}\bigr)\abs{v}^{p} + \frac{2 
A_\alpha}{(4R)^{N - \alpha}} \Bigl(\int_{\Rset^N} \abs{v }^p\Bigr)^2 + o 
\Bigl(\frac{1}{R^{N - \alpha}}\Bigr).
\end{multline}

For the numerator in \eqref{eqReformulatedStrict}, we compute by integration 
by parts
\[
\begin{split}
  \int_{\Rset^N} \abs{\nabla u_R}^2 + \abs{u_R}^2 
  & = 2 \int_{\Rset^N} \abs{\nabla (\eta_R v)}^2 + \abs{\eta_R v}^2 \\
  & = 2 \int_{\Rset^N} \eta_R^2 (\abs{\nabla v}^2 + \abs{v}^2) 
     - 2\int_{\Rset^N} \eta_R (\Delta \eta_R) \abs{v}^2\\
  & \le 2 \int_{\Rset^N} \bigl(\abs{\nabla v}^2 + \abs{v}^2\bigr) + \frac{C}{R^2} 
\int_{B_{2R} \setminus B_R} \abs{v}^2.
\end{split}
\]
If \(p < 2\), we have by the decay of the solution \(v\)
\[
 \frac{1}{R^2} \int_{B_{2R} \setminus B_R} \abs{v}^2
 = O \biggl(\frac{1}{R^{{\frac{Np - 2\alpha}{2 - p} + 2}}} \biggr) 
 = o \Bigl(\frac{1}{R^{N - \alpha}} \Bigr).
\]
In the case where \(p \ge 2\), the solution \(v\) decays exponentially. We 
conclude thus that 
\begin{equation}
\label{eqNumerator}
  \int_{\Rset^N} \abs{\nabla u_R}^2 + \abs{u_R}^2 
  =2 \int_{\Rset^N} \abs{\nabla v}^2 + \abs{v}^2 + o \Bigl(\frac{1}{R^{N - 
\alpha}} \Bigr).
\end{equation}

We derive from the asymptotic bounds \eqref{eqDenominator} and 
\eqref{eqNumerator}, an asymptotic bound on the quotient:
\begin{multline*}
 \frac{\displaystyle \Bigl(\int_{\Rset^N} \abs{\nabla u_R}^2 + \abs{u_R}^2 
\Bigr)^\frac{p}{p - 1}}
      {\displaystyle \Bigl(\int_{\Rset^N} (I_\alpha \ast \abs{u_R}^p) 
\abs{u_R}^p \Bigr)^\frac{1}{p - 1}}\\
 \le 2 \frac{\displaystyle \Bigl(\int_{\Rset^N} \abs{\nabla v}^2 + \abs{v}^2 
\Bigr)^\frac{p}{p - 1}}
 {\displaystyle \Bigl(\int_{\Rset^N} (I_\alpha \ast \abs{v}^p) \abs{v}^p 
\Bigr)^\frac{1}{p - 1}}
 \Biggl(1 
   - \frac{p A_\alpha\displaystyle \Bigl(\int_{B_R} \abs{v}^p\Bigr)^2}
          {(p - 1)(4R)^{N - \alpha}\displaystyle 
              \int_{\Rset^N} (I_\alpha \ast \abs{v}^p) \abs{v}^p}\\ 
              + o \Bigl(\frac{1}{R^{N - \alpha}}\Bigr)\Biggr).
\end{multline*}
The inequality \eqref{eqReformulatedStrict} holds thus when \(R\) is large 
enough, and the conclusion follows.
\end{proof}

\subsection{Existence of a minimal action odd solution}

We have now developped all the tools to prove the existence of a least action 
odd solution to the Choquard equation, corresponding to the existence part of theorem~\ref{theoremOdd}.

\begin{proposition}
\label{propositionExistence}
If \(\frac{N - 2}{N + \alpha} < \frac{1}{p}< \frac{N}{N + \alpha}\), then there exists solution \(u \in H^1_\mathrm{odd} (\Rset^N) \cap C^2 (\Rset^N)\) to the Choquard equation \eqref{eqChoquard} such that \(\mathcal{A} (u) = c_\mathrm{odd}\).
\end{proposition}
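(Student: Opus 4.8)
The plan is to assemble Proposition~\ref{propositionExistence} directly from the three results already established in this section, which were designed precisely for this purpose. First I would invoke Proposition~\ref{propositionOddPalaisSmaleExistence} to obtain a Palais--Smale sequence \((u_n)_{n \in \N}\) in \(H^1_{\mathrm{odd}}(\Rset^N)\) at the level \(c_{\mathrm{odd}}\), that is, a sequence with \(\mathcal{A}(u_n) \to c_{\mathrm{odd}}\) and \(\mathcal{A}'(u_n) \to 0\) in \(H^1_{\mathrm{odd}}(\Rset^N)'\). Next, Proposition~\ref{propositionStrictInequality} provides the strict inequality \(c_{\mathrm{odd}} < 2 c_0\), which is exactly the hypothesis required to apply the compactness result. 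Feeding both of these into Proposition~\ref{propositionOddPalaisSmaleCondition} yields, up to translations by a sequence \((a_n)_{n \in \N}\) in \(\Rset^{N-1} \times \{0\}\) and passage to a subsequence, a function \(u \in H^1_{\mathrm{odd}}(\Rset^N)\) with \(\mathcal{A}(u) = c_{\mathrm{odd}}\) and \(\mathcal{A}'(u) = 0\) in \(H^1(\Rset^N)'\). This already gives a weak solution achieving the level.

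The one genuine step of content that remains is the \emph{regularity} claim \(u \in C^2(\Rset^N)\), since the variational machinery only produces a weak \(H^1\) solution. I would handle this by a bootstrap argument typical for Choquard-type equations. The strategy is to treat the nonlocal term \(I_\alpha \ast \abs{u}^p\) as a coefficient: starting from \(u \in H^1(\Rset^N)\), the Hardy--Littlewood--Sobolev inequality \eqref{eqHLS} and the Sobolev embedding control the integrability of \(\abs{u}^p\) and hence of the convolution \(I_\alpha \ast \abs{u}^p\), which then improves the integrability of the right-hand side \((I_\alpha \ast \abs{u}^p)\abs{u}^{p-2}u\). Applying \(L^q\) elliptic estimates to \(-\Delta u + u = f\) with this right-hand side raises the integrability of \(u\), and iterating this scheme pushes \(u\) into \(W^{2,q}_{\mathrm{loc}}\) for large \(q\), hence into \(C^{1,\gamma}_{\mathrm{loc}}\) by Morrey's embedding; a further application of Schauder estimates with the now-Hölder-continuous right-hand side delivers \(u \in C^2(\Rset^N)\). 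This is by now standard for the Choquard equation, so I would simply cite the regularity theory from \cite{MorVanSchaft13} rather than reproduce the iteration.

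Finally, the structural conclusions of Theorem~\ref{theoremOdd} beyond mere existence---that \(u\) has constant sign on each half-space and is axially symmetric---are not part of Proposition~\ref{propositionExistence} itself, which asserts only existence of an odd solution at the level \(c_{\mathrm{odd}}\); these refinements would be addressed separately. Thus the proof of this proposition is essentially a bookkeeping combination of the preceding three propositions together with a citation to elliptic regularity. I do not anticipate any real obstacle: the delicate analytic work (the concentration--compactness argument ruling out vanishing and splitting, and the construction of the competitor \(u_R\) proving \(c_{\mathrm{odd}} < 2c_0\)) has already been carried out in the earlier propositions, and what remains here is to chain them in the correct logical order and to record the regularity upgrade.
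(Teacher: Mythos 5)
Your proposal is correct and follows exactly the paper's own argument: the paper likewise chains proposition~\ref{propositionOddPalaisSmaleExistence} (Palais--Smale sequence), proposition~\ref{propositionStrictInequality} (the strict inequality \(c_\mathrm{odd} < 2c_0\)), and proposition~\ref{propositionOddPalaisSmaleCondition} (compactness up to translations), then cites the regularity theory of \cite{MorVanSchaft13}*{proposition 4.1} for \(u \in C^2(\Rset^N)\). Your sketch of the bootstrap behind that citation is a correct account of what the cited result does, and your remark that sign and symmetry properties belong to a separate proposition matches the paper's structure.
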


\begin{proof}
Let \((u_n)_{n \in \N}\) be the sequence given by 
proposition~\ref{propositionOddPalaisSmaleExistence}. In view of 
proposition~\ref{propositionStrictInequality}, 
proposition~\ref{propositionOddPalaisSmaleCondition} is applicable and gives 
the required weak solution \(u \in H^1 (\Rset^N)\).
By the regularity theory for the Choquard equation \cite{MorVanSchaft13}*{proposition 4.1} (see also \cite{CingolaniClappSecchi2012}*{lemma A.10}), \(u \in C^2 (\Rset^N)\).
\end{proof}

\subsection{Sign and symmetry properties}
We complete the proof of theorem~\ref{theoremOdd} by showing that such solutions have a simple sign and symmetry structure.

\begin{proposition}
\label{propositionQualitative}
If \(\frac{N - 2}{N + \alpha} <  \frac{1}{p} <
\frac{1}{2}\) and if \(u \in H^1_\mathrm{odd} (\Rset^N)\) is a solution to the Choquard equation \eqref{eqChoquard} such that \(\mathcal{A} (u) = c_\mathrm{odd}\),
then \(u\) has constant sign on \(\Rset^N_+\) and \(u\) is axially symmetric with respect to an axis perpendicular to \(\partial \Rset^N_+\).
\end{proposition}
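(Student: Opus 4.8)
The plan is to reformulate the constrained minimization as the minimization of a scale-invariant quotient. As in the computation of \(\mathcal{A}(t_R u_R)\) in the proof of proposition~\ref{propositionStrictInequality}, for every \(w \in H^1_{\mathrm{odd}}(\Rset^N)\setminus\{0\}\) the value of \(\mathcal{A}\) at the unique Nehari rescaling of \(w\) equals \((\frac{1}{2} - \frac{1}{p}) Q(w)\), where
\[
  Q(w) = \frac{\bigl(\int_{\Rset^N} \abs{\nabla w}^2 + \abs{w}^2\bigr)^\frac{p}{p-1}}
              {\bigl(\int_{\Rset^N} (I_\alpha \ast \abs{w}^p)\abs{w}^p\bigr)^\frac{1}{p-1}}.
\]
Since \(p > 2\), the prefactor is positive, so \(c_{\mathrm{odd}} = (\frac{1}{2}-\frac{1}{p})\inf Q\) and the given \(u\) minimizes \(Q\) over odd functions. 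The decisive observation is that \(Q(w)\) depends on \(w\) only through \(\abs{w}\): both the Dirichlet part (because \(\abs{\nabla\abs{w}} = \abs{\nabla w}\) almost everywhere) and the nonlocal part involve \(w\) only via \(\abs{w}\). I will therefore modify \(u\) by operations that rearrange \(\abs{u}\) while keeping the function odd.

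For the sign, set \(v = u\vert_{\Rset^N_+}\) and let \(w\) be the odd extension of \(\abs{v}\); then \(\abs{w} = \abs{u}\) pointwise, so \(Q(w) = Q(u)\), \(w \in \mathcal{N}_{\mathrm{odd}}\) and \(\mathcal{A}(w) = c_{\mathrm{odd}}\). Thus \(w\) is again a minimizer, hence, by a standard Nehari argument and the principle of symmetric criticality, a weak solution of \eqref{eqChoquard}, so \(w \in C^2(\Rset^N)\) by regularity. On the connected open half-space \(\Rset^N_+\) the function \(w\) is nonnegative, does not vanish identically, and solves \(-\Delta w + (1 - (I_\alpha \ast \abs{w}^p)\abs{w}^{p-2})w = 0\) with a coefficient that is continuous precisely because \(p > 2\). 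The strong maximum principle then gives \(w > 0\) in the interior of \(\Rset^N_+\), so \(\abs{v} > 0\) there; as \(v\) is continuous and nowhere zero on a connected set, it has constant sign. Hence \(u\) has constant sign on \(\Rset^N_+\), and by oddness on \(\Rset^N_-\).

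For the axial symmetry I will use polarization. Let \(f = \abs{u}\), which is even in \(x_N\) and vanishes on \(\partial\Rset^N_+\). For a half-space \(H\) whose reflection \(\sigma_H\) acts only on the \(x'\) variables — so that \(\sigma_H\) commutes with the odd reflection \((x', x_N) \mapsto (x', -x_N)\) — the polarization \(f^H\) is again even in \(x_N\) and vanishes on \(\{x_N = 0\}\); let \(u^\star\) be the odd extension of \(f^H\vert_{\Rset^N_+}\). Polarization preserves the full \(H^1\) norm (it merely swaps the values, and the gradients, of \(f\) between \(x\) and \(\sigma_H x\)), so the numerator of \(Q(u^\star)\) equals that of \(Q(u)\); meanwhile the rearrangement inequality for the Riesz kernel gives \(\int(I_\alpha \ast (f^H)^p)(f^H)^p \ge \int(I_\alpha \ast f^p)f^p\). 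Hence \(Q(u^\star) \le Q(u)\), and since \(u\) minimizes \(Q\) equality holds. The equality case of this inequality, valid because \(I_\alpha\) is strictly radially decreasing, forces \(f^H = f\) or \(f^H = f\circ\sigma_H\) for every admissible \(H\). A standard characterization of symmetry through polarizations then yields that \(f\), and therefore \(u\), is axially symmetric about an axis perpendicular to \(\partial\Rset^N_+\).

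The main obstacle is the symmetry step. The naive polarization destroys oddness, which is why I polarize the even function \(\abs{u}\) and re-extend it oddly; the two nontrivial inputs that I will have to invoke with care are the rearrangement inequality for the nonlocal energy \emph{together with its equality case}, and the lemma turning the dichotomy \(f^H \in \{f, f\circ\sigma_H\}\), as \(H\) ranges over all half-spaces whose reflection leaves \(x_N\) fixed, into genuine axial symmetry about the \(x_N\)-direction. Both are available from the rearrangement analysis of Choquard ground states.
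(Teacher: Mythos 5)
Your proposal is correct and takes essentially the same route as the paper: for the sign, replace \(u\) by the odd extension of \(\abs{u}\vert_{\Rset^N_+}\), observe it is still a minimizer (hence a solution) and apply the strong maximum principle on the half-space; for the symmetry, polarize with respect to half-spaces whose reflections fix \(x_N\), using conservation of the \(H^1\) norm, the Riesz polarization inequality with its equality cases, and the polarization characterization of axial symmetry. The only difference is packaging: you minimize the scale-invariant quotient \(Q\) over \(H^1_{\mathrm{odd}}(\Rset^N)\), so equality follows directly from minimality, whereas the paper rewrites the problem as a ground-state problem on \(\Rset^N_+\) with the reflected kernel \(K\) and derives a contradiction via a Nehari rescaling — an equivalent bookkeeping choice.
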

The proof takes profit of the structure of the problem to rewrite it as a groundstate
of a problem on the halfspace where quite fortunately the strategy for proving similar properties of groundstates of the Choquard equation still works \cite{MorVanSchaft13}*{Propositions 5.1 and 5.2} (see also \cite{MorVanSchaft15}*{propositions 5.2 and 5.3}.

\begin{proof}[Proof of proposition~\ref{propositionQualitative}]
We first rewrite the problem of finding odd solutions on \(\Rset^N\) as a groundstate problem on \(\Rset^N_+\) whose nonlocal term has a more intricate structure.

\begin{claim}
\label{claimRewrite}
For every \(v \in H^1_{\mathrm{odd}} (\Rset^N)\),
\[
  \mathcal{A} (v) = \Tilde{\mathcal{A}} (v\vert_{\Rset^N_+}),
\]
where \(\Rset^N_+ = \Rset^{N - 1} \times (0, \infty)\) and the functional \(\Tilde{\mathcal{A}} : H^1_0 (\Rset^N_+) \mapsto \Rset\) is defined for \(w \in H^1_0 (\Rset^N_+)\) by 
\[
  \Tilde{\mathcal{A}} (w) = \int_{\Rset^N_+} \abs{\nabla w}^2 + \abs{w}^2 - 
  \frac{1}{p} \int_{\Rset^N_+} \int_{\Rset^N_+} K (\abs{x' - y'},x_N, y_N)
  \abs{u (x)}^p \abs{u (y)}^p \dif x \dif y,
\]
with \(x= (x', x_N)\), \(y = (y', y_N)\) and the kernel \(K : (0, \infty)^3 \to \Rset\)
defined for each \((r, s,t) \in (0, \infty)^3\) by 
\[
 K (r, s, t) = \frac{A_\alpha}{\bigl(r^2 + (s - t)^2\bigr)^\frac{N - \alpha}{2}}
 + \frac{A_\alpha}{\bigl(r^2 + (s + t)^2\bigr)^\frac{N - \alpha}{2}}.
\]
In particular, \(u \in \Tilde{\mathcal{N}}_{\mathrm{nod}}\), where 
\[
  \Tilde{\mathcal{N}}_{\mathrm{nod}}
  =\bigl\{ w \in H^1_0 (\Rset^N_+) \st \dualprod{\Tilde{\mathcal{A}}' (w)}{w} = 0\bigr\}
\]
and
\[
 \Tilde{\mathcal{A}} (u) = \inf_{\Tilde{\mathcal{N}}_{\mathrm{nod}}} \Tilde{\mathcal{A}}.
\]

\end{claim}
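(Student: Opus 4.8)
The plan is to verify the pointwise functional identity $\mathcal{A}(v) = \Tilde{\mathcal{A}}(v\vert_{\Rset^N_+})$ by treating the local quadratic term and the nonlocal term separately, and then to read off the characterization of $u\vert_{\Rset^N_+}$ as a minimizer from the fact that odd reflection sets up a bijection between the two constraint sets.

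For the local part, write $w = v\vert_{\Rset^N_+}$. Since $v$ is odd in $x_N$, both $\abs{v}^2$ and $\abs{\nabla v}^2$ are even in $x_N$: the tangential derivatives $\partial_i v$ ($i<N$) are odd and the normal derivative $\partial_N v$ is even, so the squared gradient is even. Hence $\int_{\Rset^N}\abs{\nabla v}^2+\abs{v}^2 = 2\int_{\Rset^N_+}\abs{\nabla w}^2+\abs{w}^2$, and the factor $\tfrac12$ in $\mathcal{A}$ becomes the coefficient $1$ in $\Tilde{\mathcal{A}}$. I would also record here, for use below, that odd reflection is a linear homeomorphism of $H^1_0(\Rset^N_+)$ onto $H^1_{\mathrm{odd}}(\Rset^N)$: an odd $H^1$ function has vanishing trace on $\Rset^{N-1}\times\{0\}$, so $w$ indeed lies in $H^1_0(\Rset^N_+)$, and conversely every element of $H^1_0(\Rset^N_+)$ extends oddly. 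This is what legitimizes the reduction to the half-space.

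The heart of the claim is the nonlocal term. Writing $\bar x = (x',-x_N)$ for the reflection, I would split the double integral $\int_{\Rset^N}\int_{\Rset^N} I_\alpha(x-y)\abs{v(x)}^p\abs{v(y)}^p\dif x\dif y$ over the four products $\Rset^N_\pm\times\Rset^N_\pm$. Because $\abs{v}^p$ is even in $x_N$, one has $\abs{v(\bar x)}^p = \abs{w(x)}^p$ for $x\in\Rset^N_+$, so after reflecting the arguments lying in $\Rset^N_-$ all four pieces become integrals over $\Rset^N_+\times\Rset^N_+$ weighted by $\abs{w(x)}^p\abs{w(y)}^p$. The $(+,+)$ and $(-,-)$ pieces each contribute the kernel $A_\alpha/(\abs{x'-y'}^2+(x_N-y_N)^2)^{(N-\alpha)/2}$, the first term of $K$, while the $(+,-)$ and $(-,+)$ pieces each contribute $A_\alpha/(\abs{x'-y'}^2+(x_N+y_N)^2)^{(N-\alpha)/2}$, the second term of $K$, since reflecting $y\mapsto\bar y$ turns $(x_N-y_N)$ into $(x_N+y_N)$. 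Summing, the integral equals $2\int_{\Rset^N_+}\int_{\Rset^N_+} K(\abs{x'-y'},x_N,y_N)\abs{w(x)}^p\abs{w(y)}^p\dif x\dif y$, and division by $2p$ yields the coefficient $\tfrac1p$ of $\Tilde{\mathcal{A}}$. The one delicate point is the bookkeeping of the four regions and the factors of $2$; this is the step most prone to error, and everything else is symmetry.

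Finally, for the \emph{in particular} assertion: from $\mathcal{A}(tv)=\Tilde{\mathcal{A}}(tw)$ for all $t$, differentiating at $t=1$ gives $\dualprod{\mathcal{A}'(v)}{v}=\dualprod{\Tilde{\mathcal{A}}'(w)}{w}$. Taking $v=u$ and using that $u$ solves \eqref{eqChoquard}, so $\mathcal{A}'(u)=0$ and a fortiori $\dualprod{\mathcal{A}'(u)}{u}=0$, gives $\dualprod{\Tilde{\mathcal{A}}'(w)}{w}=0$, that is $w\in\Tilde{\mathcal{N}}_{\mathrm{nod}}$. For minimality, odd extension maps $\Tilde{\mathcal{N}}_{\mathrm{nod}}$ (with $w\neq0$) bijectively onto $\mathcal{N}_{\mathrm{odd}}$ and preserves the value of the functional by the identity just established; hence $\inf_{\Tilde{\mathcal{N}}_{\mathrm{nod}}}\Tilde{\mathcal{A}} = \inf_{\mathcal{N}_{\mathrm{odd}}}\mathcal{A} = c_{\mathrm{odd}} = \mathcal{A}(u) = \Tilde{\mathcal{A}}(w)$, so $w$ attains the infimum.
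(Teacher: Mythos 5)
Your proposal is correct and takes essentially the same approach as the paper: the paper's entire proof of this claim is the observation that \(v\vert_{\Rset^N_+} \in H^1_0(\Rset^N_+)\) for odd \(v\) together with a ``direct computation of the integrals,'' which is precisely the four-region reflection bookkeeping (with the correct factors of two) that you carry out, and your derivation of the ``in particular'' part via the value-preserving bijection between \(\mathcal{N}_{\mathrm{odd}}\) and \(\Tilde{\mathcal{N}}_{\mathrm{nod}}\setminus\{0\}\) is the intended argument. Your parenthetical insistence on excluding \(w = 0\) from \(\Tilde{\mathcal{N}}_{\mathrm{nod}}\) correctly repairs a small oversight in the paper's own definition of that set.
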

\begin{proofclaim}
This follows from the fact that if \(u \in H^1_\mathrm{odd} (\Rset^N)\), then \(u \vert_{\Rset^N_+} \in H^1_0 (\Rset^N_+)\) and by direct computation of the integrals.
\end{proofclaim}

\begin{claim}
\label{claimPositive}
One has either \(u > 0\) almost everywhere on \(\Rset^N_+\) or \(u < 0\) almost everywhere on \(\Rset^N_+\).
\end{claim}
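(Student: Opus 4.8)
The plan is to follow the strategy that establishes positivity of groundstates of the Choquard equation \cite{MorVanSchaft13}, transported to the half-space functional \(\Tilde{\mathcal{A}}\) through claim~\ref{claimRewrite}. The structural fact that makes this work is that the nonlocal term of \(\Tilde{\mathcal{A}}\) depends on its argument only through \(\abs{u}^p\), so it is left unchanged when \(u\) is replaced by \(\abs{u}\), while the quadratic term cannot increase because \(\abs{\nabla \abs{u}} = \abs{\nabla u}\) almost everywhere for functions in \(H^1\).

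Writing \(w = u\vert_{\Rset^N_+} \in H^1_0 (\Rset^N_+)\), the first step is to check that \(\abs{w}\) is again a minimizer. Indeed \(\abs{w} \in H^1_0 (\Rset^N_+)\), the double integral against \(K\) coincides for \(w\) and \(\abs{w}\), and \(\int_{\Rset^N_+} \abs{\nabla \abs{w}}^2 + \abs{w}^2 = \int_{\Rset^N_+} \abs{\nabla w}^2 + \abs{w}^2\). Hence the Nehari identity \(\dualprod{\Tilde{\mathcal{A}}' (\abs{w})}{\abs{w}} = 0\) still holds, so \(\abs{w} \in \Tilde{\mathcal{N}}_{\mathrm{nod}}\), and \(\Tilde{\mathcal{A}}(\abs{w}) = \Tilde{\mathcal{A}}(w) = \inf_{\Tilde{\mathcal{N}}_{\mathrm{nod}}} \Tilde{\mathcal{A}}\) by claim~\ref{claimRewrite}.

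The second step is to upgrade the minimizer \(\abs{w}\) to a free critical point. Since \(\Tilde{\mathcal{N}}_{\mathrm{nod}}\) is cut out by the single constraint \(\dualprod{\Tilde{\mathcal{A}}'(w)}{w} = 0\), it is a smooth manifold (unlike \(\mathcal{N}_{\mathrm{nod}}\)), and it is a natural constraint: the radial derivative of the constraint at \(\abs{w}\) is a nonzero multiple of \(\int_{\Rset^N_+} \abs{\nabla \abs{w}}^2 + \abs{w}^2\), nonzero because \(p > 1\) and \(w \ne 0\). The Lagrange multiplier of the constrained minimizer \(\abs{w}\) therefore vanishes, so \(\Tilde{\mathcal{A}}'(\abs{w}) = 0\) and \(\abs{w} \ge 0\) solves weakly on \(\Rset^N_+\)
\[
  -\Delta \abs{w} + \abs{w} = V \abs{w}^{p - 1}, \qquad
  V(x) = \int_{\Rset^N_+} K(\abs{x' - y'}, x_N, y_N) \abs{w(y)}^p \dif y,
\]
where \(V > 0\) everywhere since \(K > 0\) and \(w \not\equiv 0\). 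To conclude I would apply the strong maximum principle: as \(V \abs{w}^{p-1} \ge 0\), the function \(\abs{w} \ge 0\) is a nontrivial supersolution of \(-\Delta + 1\), hence \(\abs{w} > 0\) throughout the connected open set \(\Rset^N_+\). Because \(u\) is continuous (indeed \(C^2\), by the regularity invoked in proposition~\ref{propositionExistence}), \(\abs{u} = \abs{w} > 0\) on \(\Rset^N_+\) shows that \(u\) never vanishes there, and connectedness of \(\Rset^N_+\) forces \(u\) to have constant sign.

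The main obstacle is the second step, namely verifying that \(\Tilde{\mathcal{N}}_{\mathrm{nod}}\) is a natural constraint and that the Lagrange multiplier genuinely vanishes, so that the reflected minimizer \(\abs{w}\) solves the equation rather than merely the constrained problem; this is carried out exactly as in the groundstate analysis of \cite{MorVanSchaft13}. A minor technical point is the regularity of \(V\) needed for the strong maximum principle, which follows from the Hardy--Littlewood--Sobolev bounds already used for the kernel \(I_\alpha\).
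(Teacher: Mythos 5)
Your proof is correct and follows essentially the same route as the paper: replace \(w\) by \(\abs{w}\) (which changes neither the action nor the Nehari constraint, since the functional sees its argument only through \(\abs{u}^p\) and \(\abs{\nabla u}\)), upgrade the minimizer to a critical point, and conclude by the strong maximum principle together with the continuity of \(u\) and the connectedness of \(\Rset^N_+\). The only presentational difference is that you carry out the natural-constraint/Lagrange-multiplier step explicitly on the half-space, whereas the paper reflects \(\abs{w}\) oddly back to \(\Rset^N\), regards the reflection as a minimizer on \(\mathcal{N}_{\mathrm{odd}}\) at the level \(c_{\mathrm{odd}}\) and hence a weak solution of \eqref{eqChoquard}, and then invokes \(C^2\) regularity before applying the classical strong maximum principle for supersolutions.
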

\begin{proofclaim}
Let \(w = u\vert_{\Rset^N_+}\). We observe that \(\abs{w} \in H^1_0 (\Rset^N_+)\),
\begin{align*}
 \Tilde{\mathcal{A}} (\abs{w}) & = \Tilde{\mathcal{A}} (w)  = c_{\mathrm{odd}} &
 & \text{and} &
 \dualprod{\Tilde{\mathcal{A}}' (\abs{w})}{\abs{w}}
 =  \dualprod{\Tilde{\mathcal{A}}' (w)}{w}.
\end{align*}
Therefore, if we define \(\Bar{u} \in H^1_{\mathrm{odd}} (\Rset^N)\) for almost every 
\(x = (x', x_N) \in \Rset^N\) by 
\[
 \Bar{u} (x)=
 \begin{cases}
   \abs{w} (x', x_N) & \text{if \(x_N > 0\)},\\
   -\abs{w} (x', x_N) & \text{if \(x_N < 0\)},
 \end{cases}
\]
the function \(\Bar{u}\) is a weak solution to the Choquard equation \eqref{eqChoquard}.
This function \(\Bar{u}\) is thus of class \(C^2\) \cite{MorVanSchaft13}*{proposition 4.1} (see also \cite{CingolaniClappSecchi2012}*{lemma A.10}) and, in the classical sense, it satisfies
\[
 -\Delta \Bar{u} + \Bar{u} \ge 0 \qquad \text{in \(\Rset^N_+\)}.
\]
By the usual strong maximum principle for classical supersolutions, we conclude that \(\abs{u} = \Bar{u} > 0\) in \(\Rset^N_+\).
Since the function \(u\) was also a solution to the Choquard equation \eqref{eqChoquard}, it is also a continuous function, and we have thus either \(u = \abs{u} > 0\) in \(\Rset^N_+\) or \(u = - \abs{u} < 0\) in \(\Rset^N_+\).
\end{proofclaim}

\begin{claim}
\label{claimSymmetry}
The solution \(u\) is axially symmetric with respect to an axis parallel to \(\{0\} \times \Rset \subset \Rset^N\).
\end{claim}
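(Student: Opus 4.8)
The plan is to recast the asserted axial symmetry as radial symmetry in the transverse variables \(x' \in \Rset^{N - 1}\) and to obtain it by polarization, in the same spirit as the proof of radial symmetry for groundstates of the Choquard equation. Write \(w = u\vert_{\Rset^N_+} \in H^1_0 (\Rset^N_+)\), which minimizes \(\Tilde{\mathcal{A}}\) over \(\Tilde{\mathcal{N}}_{\mathrm{nod}}\) by claim~\ref{claimRewrite}, and recall from claim~\ref{claimPositive} that we may assume \(w > 0\) on \(\Rset^N_+\). For an affine half-space \(H \subset \Rset^{N - 1}\) with reflection \(\sigma_H\), I would extend \(\sigma_H\) to \(\Rset^N_+\) letting it act on the \(x'\)-variable only; this leaves \(\Rset^N_+\) invariant, so the polarization \(w^H\) belongs to \(H^1_0 (\Rset^N_+)\) and, as usual, satisfies \(\int_{\Rset^N_+} \abs{\nabla w^H}^2 + \abs{w^H}^2 = \int_{\Rset^N_+} \abs{\nabla w}^2 + \abs{w}^2\).

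The key point is a polarization inequality for the nonlocal term. First I would observe that for each fixed \((x_N, y_N) \in (0, \infty)^2\) the map \(r \mapsto K (r, x_N, y_N)\) is strictly decreasing, since both summands defining \(K\) are strictly decreasing in \(r\) (the exponent \(\frac{N - \alpha}{2}\) being positive). As \(\sigma_H\) is an isometry of the \(x'\)-variable fixing \(x_N\) and \(y_N\), the standard two-point rearrangement inequality for a kernel decreasing in \(\abs{x' - y'}\), together with \((\abs{w}^p)^H = \abs{w^H}^p\), yields
\begin{multline*}
 \int_{\Rset^N_+}\int_{\Rset^N_+} K (\abs{x' - y'}, x_N, y_N) \abs{w^H (x)}^p \abs{w^H (y)}^p \dif x \dif y\\
 \ge \int_{\Rset^N_+}\int_{\Rset^N_+} K (\abs{x' - y'}, x_N, y_N) \abs{w (x)}^p \abs{w (y)}^p \dif x \dif y.
\end{multline*}

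Next I would exploit minimality. Exactly as in the derivation of \eqref{eqReformulatedStrict}, minimizing \(\Tilde{\mathcal{A}}\) on \(\Tilde{\mathcal{N}}_{\mathrm{nod}}\) amounts to minimizing the scale-invariant quotient whose numerator is \(\int_{\Rset^N_+} \abs{\nabla w}^2 + \abs{w}^2\) and whose denominator is the nonlocal term. Since polarization fixes the numerator and does not decrease the denominator, the function \(w^H\) gives a quotient no larger than that of \(w\); as \(w\) is a minimizer, equality must hold, and therefore equality holds in the polarization inequality above for every polarizer \(H\).

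The hard part will be the equality analysis and the passage to genuine symmetry. Using the strict monotonicity of \(K\) in \(\abs{x' - y'}\), equality in the two-point inequality forces, for each \(H\), that either \(w^H = w\) or \(w^H = w \circ \sigma_H\) almost everywhere. Since \(w\) is continuous and positive (claim~\ref{claimPositive} together with the \(C^2\) regularity of \(u\)), I would then invoke the characterization of functions that coincide, up to reflection, with all of their polarizations, exactly as in the radial symmetry argument for Choquard groundstates \cite{MorVanSchaft13}*{Propositions 5.1 and 5.2} (see also \cite{MorVanSchaft15}*{propositions 5.2 and 5.3}), to produce a center \(z' \in \Rset^{N - 1}\) such that \(w\) depends only on \(\abs{x' - z'}\) and \(x_N\). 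Equivalently, \(u\) is axially symmetric with respect to the axis \(\{z'\} \times \Rset\), which is parallel to \(\{0\} \times \Rset\). The delicate point is to guarantee that a \emph{single} center \(z'\) serves all directions simultaneously; this is precisely where positivity, continuity, and the connectedness and density arguments of the cited propositions are indispensable.
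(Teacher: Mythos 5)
Your argument is correct and its skeleton coincides with the paper's: polarize with respect to half-spaces whose boundary is perpendicular to \(\partial \Rset^N_+\), use invariance of the \(H^1_0\) norm under polarization, exploit minimality over the Nehari manifold of \(\Tilde{\mathcal{A}}\) via the scale-invariant quotient (equivalently, the paper's rescaling \(\tau w^H\)) to force equality in the nonlocal inequality, and then pass from the dichotomy \(w^H = w\) or \(w^H = w \circ \sigma_H\) for all admissible \(H\) to axial symmetry by the cited characterization results. The one step you treat genuinely differently is the polarization inequality for the nonlocal term. The paper never argues directly with the kernel \(K\): using the evenness of \(\abs{u}^p\) under reflection across \(\Rset^{N-1} \times \{0\}\), it folds the half-space double integral back into the full-space quantity \(\tfrac{1}{2}\int_{\Rset^N} \bigl(I_\alpha \ast (\abs{u}^p)^H\bigr)(\abs{u}^p)^H\) (this is the identity in \eqref{eqPolarizationNonlocal}), so that the known polarization inequality for the Riesz kernel, \emph{together with its equality cases}, can be quoted verbatim from \cite{MorVanSchaft13}*{lemma 5.3}. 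You instead prove the inequality intrinsically on \(\Rset^N_+\), observing that \(K(\cdot, x_N, y_N)\) is strictly decreasing and that \(\sigma_H\), acting on the \(x'\)-variable alone, is compatible with the two-point rearrangement argument; this is sound, and it is in fact more general, since it needs no reflection structure and works for any kernel strictly decreasing in the transverse distance \(\abs{x' - y'}\). What it costs you is that the equality-case analysis -- the passage from the pointwise two-point alternative to the global dichotomy \(w^H = w\) or \(w^H = w \circ \sigma_H\) almost everywhere -- must be redone for the kernel \(K\) (the strict monotonicity you note is exactly what makes the argument of \cite{MorVanSchaft13}*{lemma 5.3} go through), whereas the paper's folding trick buys that conclusion as a direct citation.
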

\begin{proofclaim}
Let \(H\) be a closed affine half-space perpendicular to \(\partial \Rset^{N}_+\) and let \(\sigma_H : \Rset^N \to \Rset^N\) be the reflection with respect to \(\partial H\). We recall that the polarization or two-point rearrangement with respect to \(H\) of \(w\) is the function \(w^H : \Rset^N \to \Rset\) defined for each \(x \in \Rset^N\) by \citelist{\cite{BrockSolynin2000}\cite{Baernstein1994}}
\[
 w^H (x) = 
 \begin{cases}
   \max \bigl(w (x), w (\sigma_H (x))\bigr) & \text{if \(x \in H\)},\\
   \min \bigl(w (x), w (\sigma_H (x))\bigr) & \text{if \(x \in \Rset^N \setminus H\)}.
 \end{cases}
\]
Since \(\partial H\) is perpendicular to \(\Rset^{N - 1} \times \{0\}\), we have \(\sigma_H (\Rset^N_+) = \Rset^N_+\) so that 
\(w^H \in H^1_0 (\Rset^N)\) and \cite{BrockSolynin2000}*{lemma 5.3} 
\[
 \int_{\Rset^N_+} \abs{\nabla w^H}^2 + \abs{w^H}^2
 = \int_{\Rset^N_+} \abs{\nabla w}^2 + \abs{w}^2.
\]
Moreover, we also have
\begin{multline}
\label{eqPolarizationNonlocal}
\int_{\Rset^N_+} \int_{\Rset^N_+} K (\abs{x' - y'}, x_N, y_N) \abs{w^H (x)}^p \abs{w^H (y)}^p \dif x \dif y\\
= \frac{1}{2} \int_{\Rset^N} \bigl(I_\alpha \ast (\abs{u}^p)^H\bigr) (\abs{u}^p)^H 
 \ge \frac{1}{2} \int_{\Rset^N_+} \bigl(I_\alpha \ast \abs{u}^p\bigr)\, \abs{u}^p\\
 =\int_{\Rset^N_+} \int_{\Rset^N_+} K (\abs{x' - y'}, x_N, y_N)\, \abs{w (x)}^p\, \abs{w (y)}^p \dif x \dif y,
\end{multline}
with equality if and only if either \(\abs{u}^H = \abs{u}\) almost everywhere on \(\Rset^N_+\) or 
\(\abs{u}^H = \abs{u} \circ \sigma_H\) almost everywhere on \(\Rset^N_+\) \cite{MorVanSchaft13}*{lemma 5.3} (see also \citelist{\cite{Baernstein1994}*{corollary 4}\cite{VanSchaftingenWillem2004}*{proposition 8}}), or equivalently,
since by claim~\ref{claimPositive} \(w\) has constant sign on \(\Rset^N\), either \(w^H = w\) almost everywhere on \(\Rset^N_+\) or \(w^H = w \circ \sigma_H\) almost everywhere on \(\Rset^N_+\).

If the inequality \eqref{eqPolarizationNonlocal} was strict, then, since \(p > 1\) there would exist \(\tau \in (0, 1)\) such that \(\tau w^H \in \Tilde{\mathcal{N}}\) and we would have 
\[
 \mathcal{A} (\tau u^H) < \Tilde{\mathcal{A}} (w) = c_{\mathrm{odd}},
\]
in contradiction with claim~\ref{claimRewrite}. 

We have thus proved that for every affine half-space \(H \subset \Rset^N\) whose boundary \(\partial H\) is perpendicular to \(\partial \Rset^{N}_+\), either 
\(w^H = w\) almost everywhere on \(\Rset^N_+\) or \(w^H = w \circ \sigma_H\) almost everywhere on \(\Rset^N_+\). This implies that \(w\) is axially symmetric with respect to an axis perpendicular to \(\partial \Rset^N_+\) \citelist{\cite{MorVanSchaft13}*{lemma 5.3}\cite{VanSchaftingenWillem2008}*{proposition 3.15}}, which is equivalent to the claim.
\end{proofclaim}

The proposition follows directly from claims~\ref{claimPositive} and \ref{claimSymmetry}.
\end{proof}

\section{Minimal action nodal solution}
This section is devoted to the proof of theorem~\ref{theoremNod} on the existence of a least action nodal solution.

\subsection{Minimax principle}
We begin by observing that the counterpart of proposition~\ref{propositionOddNondegenerate} holds.

\begin{proposition}[Nondegeneracy of the level]
\label{propositionNodNondegenerate}
If  \(\frac{N - 2}{N + \alpha} \le \frac{1}{p} \le \frac{N}{N + \alpha}\), then 
\[
  c_{\mathrm{nod}} > 0.
\]
\end{proposition}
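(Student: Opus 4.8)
The plan is to reduce the positivity of \(c_{\mathrm{nod}}\) to the already-known positivity of \(c_0\), exactly mirroring the proof of proposition~\ref{propositionOddNondegenerate}. The key observation will be the set inclusion \(\mathcal{N}_{\mathrm{nod}} \subset \mathcal{N}_0\), after which the infimum over the smaller set dominates the infimum over the larger one.

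First I would take an arbitrary \(u \in \mathcal{N}_{\mathrm{nod}}\). By definition \(u^+ \ne 0 \ne u^-\), so \(u = u^+ + u^- \ne 0\). Since the differential \(\mathcal{A}'(u)\) is a linear functional on \(H^1(\Rset^N)\), decomposing \(u = u^+ + u^-\) in its argument gives
\[
  \dualprod{\mathcal{A}'(u)}{u}
  = \dualprod{\mathcal{A}'(u)}{u^+} + \dualprod{\mathcal{A}'(u)}{u^-}
  = 0,
\]
where both terms on the right vanish by the two defining constraints of \(\mathcal{N}_{\mathrm{nod}}\). Together with \(u \ne 0\), this shows \(u \in \mathcal{N}_0\). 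As \(u \in \mathcal{N}_{\mathrm{nod}}\) was arbitrary, we obtain \(\mathcal{N}_{\mathrm{nod}} \subset \mathcal{N}_0\), and consequently
\[
  c_{\mathrm{nod}} = \inf_{\mathcal{N}_{\mathrm{nod}}} \mathcal{A}
  \ge \inf_{\mathcal{N}_0} \mathcal{A} = c_0.
\]
The conclusion \(c_{\mathrm{nod}} > 0\) then follows immediately from the known strict positivity \(c_0 > 0\) \cite{MorVanSchaft13}.

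There is essentially no genuine obstacle in this argument; the only point to verify is the additivity step, which is immediate from the explicit expression \(\dualprod{\mathcal{A}'(u)}{v} = \int_{\Rset^N} \scalprod{\nabla u}{\nabla v} + u v - \int_{\Rset^N}(I_\alpha \ast \abs{u}^p)\abs{u}^{p-2} u\, v\) and its linearity in \(v\). Should one wish to avoid invoking \(c_0 > 0\) as a black box, an alternative self-contained route would be to note that the constraint \(\dualprod{\mathcal{A}'(u)}{u^+} = 0\) combined with \(I_\alpha > 0\) and \(\abs{u}^p \ge \abs{u^+}^p\) yields \(\norm{u^+}^2 \le \int_{\Rset^N}(I_\alpha \ast \abs{u^+}^p)\abs{u^+}^p\), and then the Hardy--Littlewood--Sobolev inequality \eqref{eqHLS} together with the Sobolev embedding gives \(\norm{u^+}^2 \le C \norm{u^+}^{2p}\); since \(p > 1\) and \(u^+ \ne 0\) this forces a uniform lower bound \(\norm{u^+} \ge \delta > 0\), and using \(u \in \mathcal{N}_0\) to write \(\mathcal{A}(u) = (\tfrac{1}{2} - \tfrac{1}{2p})\int_{\Rset^N} \abs{\nabla u}^2 + \abs{u}^2 \ge (\tfrac{1}{2} - \tfrac{1}{2p})\delta^2 > 0\) recovers the claim directly. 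The inclusion argument is, however, the shortest and most transparent and is what I would present.
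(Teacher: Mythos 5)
Your proposal is correct and follows essentially the same route as the paper: the inclusion \(\mathcal{N}_{\mathrm{nod}} \subset \mathcal{N}_0\) (which the paper states without proof and you justify via the linearity of \(\mathcal{A}'(u)\) applied to \(u = u^+ + u^-\)) gives \(c_{\mathrm{nod}} \ge c_0\), and the known positivity of \(c_0\) concludes. The alternative self-contained argument you sketch is a nice bonus but not needed; your main argument matches the paper's.
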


\begin{proof}
In view of the inequality \(c_0 > 0\) \cite{MorVanSchaft13}, it suffices to 
note that since \(\mathcal{N}_{\mathrm{nod}} \subset \mathcal{N}_0\) we have 
\(c_{\mathrm{nod}} \ge c_0\). 
\end{proof}
We first reformulate the minimization problem as a minimax problem.

\begin{proposition}[Minimax principle]
\label{propositionSimpleVariational}
If \(\frac{N - 2}{N + \alpha} \le  \frac{1}{p} < 
\frac{1}{2}\), then for every \(\varepsilon > 0\), 
\[
  c_{\mathrm{nod}} = \inf_{\gamma \in \Gamma} \sup_{\Bset^2} \mathcal{A} 
\circ \gamma,
\]
where 
\begin{multline*}
  \Gamma = \Bigl\{ \gamma \in C \bigl(\Bset^2; H^1_0 (\Rset^N)\bigr) : \xi 
\bigl(\gamma(\partial \Bset^2)\bigr) \not \ni 0, \, \deg (\xi \circ 
\gamma ) = 1\\ \text{ and } (\mathcal{A} \circ 
\gamma)^\frac{p - 1}{p - 2} \le c_\mathrm{nod}^\frac{p - 1}{p - 2} + 
\varepsilon - c_0^\frac{p - 1}{p - 2} \text{on \(\partial \Bset^2\)}\Bigr\},
\end{multline*}
where the map \(\xi = (\xi_+, \xi_-) \in C \bigl(H^1 (\Rset^N); \Rset^2\bigr)\) 
is defined for each \(u \in H^1 (\Rset^N)\) by
\[
  \xi_{\pm} (u) = 
  \left\{
  \begin{aligned}&\frac{\displaystyle\int_{\Rset^N} (I_\alpha \ast 
\abs{u}^p) \abs{u_\pm}^p }{\displaystyle\int_{\Rset^N} \abs{\nabla u_\pm}^2 + 
\abs{u_\pm}^2} 
- 1 &  & \text{if \(u_\pm \ne 0\)},\\
& - 1 & & \text{if \(u_\pm = 0\)}.
\end{aligned}
\right.
\]
Moreover, for every \(\gamma \in \Gamma\), 
\(\mathcal{N}_{\mathrm{nod}} \cap \gamma (\Bset^2) \ne 0\).
\end{proposition}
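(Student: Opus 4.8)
The plan is to establish the two inequalities \(c_{\mathrm{nod}}\le\inf_{\gamma\in\Gamma}\sup_{\Bset^2}\mathcal{A}\circ\gamma\) and its reverse, after recording the meaning of \(\xi\). First I would check that \(\xi\) is continuous on \(H^1(\Rset^N)\): the only delicate point is continuity where \(u_\pm=0\), and it holds because \(\frac1p<\frac12\). Indeed, by the Hardy--Littlewood--Sobolev inequality \eqref{eqHLS} and the Sobolev inequality the numerator of \(\xi_\pm\) is bounded by \(C\norm{u}^p\norm{u_\pm}^p\) while its denominator is \(\norm{u_\pm}^2\), so the quotient tends to \(0\) as \(\norm{u_\pm}\to0\) since \(p>2\). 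A direct computation gives \(\dualprod{\mathcal{A}'(u)}{u_\pm}=-\bigl(\int_{\Rset^N}\abs{\nabla u_\pm}^2+\abs{u_\pm}^2\bigr)\,\xi_\pm(u)\); hence \(\xi(u)=0\) forces \(u_+\neq0\neq u_-\), since otherwise some component of \(\xi(u)\) equals \(-1\), and is then equivalent to \(u\in\mathcal{N}_{\mathrm{nod}}\), while the sign of \(\xi_\pm(u)\) records the sign of \(\dualprod{\mathcal{A}'(u)}{u_\pm}\).

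For the lower bound and the final assertion I would use the Brouwer degree. For any \(\gamma\in\Gamma\) the map \(\xi\circ\gamma\) does not vanish on \(\partial\Bset^2\) and \(\deg(\xi\circ\gamma)=1\neq0\), so by the solution property of the degree there is \(z\in\Bset^2\) with \(\xi(\gamma(z))=0\), that is, \(\gamma(z)\in\mathcal{N}_{\mathrm{nod}}\cap\gamma(\Bset^2)\). This proves the last assertion and yields \(\sup_{\Bset^2}\mathcal{A}\circ\gamma\ge\mathcal{A}(\gamma(z))\ge c_{\mathrm{nod}}\); taking the infimum over \(\gamma\) gives \(\inf_{\gamma\in\Gamma}\sup_{\Bset^2}\mathcal{A}\circ\gamma\ge c_{\mathrm{nod}}\). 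This half uses only the nonvanishing and the degree, not the energy bound on \(\partial\Bset^2\).

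For the reverse inequality I would construct competitors. Fix \(\delta>0\) and \(u\in\mathcal{N}_{\mathrm{nod}}\) with \(\mathcal{A}(u)\le c_{\mathrm{nod}}+\delta\), and set \(A=\int_{\Rset^N}(I_\alpha\ast\abs{u_+}^p)\abs{u_+}^p\), \(B=\int_{\Rset^N}(I_\alpha\ast\abs{u_-}^p)\abs{u_-}^p\) and \(C=\int_{\Rset^N}(I_\alpha\ast\abs{u_+}^p)\abs{u_-}^p\). Then \(\mathcal{A}(su_++tu_-)=\varphi(s,t)\) with \(\varphi(s,t)=\tfrac12\bigl(s^2\norm{u_+}^2+t^2\norm{u_-}^2\bigr)-\tfrac1{2p}\bigl(s^{2p}A+2s^pt^pC+t^{2p}B\bigr)\). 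A fibering lemma, which I would prove from the strict monotonicity in \(s\) of \(\norm{u_+}^2-s^{2p-2}A-s^{p-2}t^pC\) together with \(\varphi\to-\infty\) at infinity, shows that \(\varphi\) has a unique critical point in \((0,\infty)^2\), namely \((1,1)\), that it is the strict global maximum, and that along the family \(\xi\) vanishes only there, with local degree \(+1\). I would then let \(\gamma\) parametrise over \(\Bset^2\) a large rectangle \([s_0,s_1]\times[t_0,t_1]\subset(0,\infty)^2\) through \((s,t)\mapsto su_++tu_-\). The sign pattern of \(\xi_\pm\), negative for small and positive for large \(s\), respectively \(t\), makes \(\xi\circ\gamma\) wind once around the origin, so \(\deg(\xi\circ\gamma)=1\), while \(\sup_{\Bset^2}\mathcal{A}\circ\gamma=\varphi(1,1)=\mathcal{A}(u)\le c_{\mathrm{nod}}+\delta\).

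The genuinely hard point is the energy condition on \(\partial\Bset^2\), which I expect to be the main obstacle. As the rectangle exhausts \((0,\infty)^2\), the supremum of \(\mathcal{A}\) over \(\partial\Bset^2\) tends to \(\max\bigl(J(u_+),J(u_-)\bigr)\), where \(J(v)=\max_{s>0}\mathcal{A}(sv)\ge c_0\) is the action of the Nehari rescaling of a one-signed function. Writing \(\theta=\frac{p-1}{p-2}>1\), the required bound is exactly \(J(u_\pm)^\theta\le c_{\mathrm{nod}}^\theta+\varepsilon-c_0^\theta\), which I would deduce from the superadditivity estimate \(J(u_+)^\theta+J(u_-)^\theta\le\mathcal{A}(u)^\theta\) together with \(J(u_\mp)\ge c_0\) and \(\mathcal{A}(u)\le c_{\mathrm{nod}}+\delta\), for \(\delta\) small. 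Since \((1,1)\) is critical, the Nehari identities \(\norm{u_+}^2=A+C\) and \(\norm{u_-}^2=B+C\) give the explicit values \(\mathcal{A}(u)=\frac{p-1}{2p}(A+2C+B)\) and \(J(u_+)=\frac{p-1}{2p}(A+C)^{p/(p-1)}A^{-1/(p-1)}\), and symmetrically for \(J(u_-)\), so the superadditivity reduces to the homogeneous inequality
\[
 \frac{(A+C)^{\frac{p}{p-2}}}{A^{\frac{1}{p-2}}}+\frac{(B+C)^{\frac{p}{p-2}}}{B^{\frac{1}{p-2}}}\le\bigl(A+2C+B\bigr)^{\frac{p-1}{p-2}}.
\]
This fails for unconstrained \(C\), and the positivity of the Riesz kernel is decisive: the bilinear form \((f,g)\mapsto\int_{\Rset^N}(I_\alpha\ast f)g\) is positive definite, so Cauchy--Schwarz yields \(C\le\sqrt{AB}\), and in this range the inequality holds, being an equality exactly on the extremal \(C=\sqrt{AB}\); this sharpness is what fixes the exponent \(\theta\) and the subtracted term \(c_0^\theta\). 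The same bound \(C\le\sqrt{AB}\) is what ensures in the fibering lemma that the interior maximum dominates the one-signed limits, so the two delicate points stem from the same positivity. Letting \(\delta\to0\) then gives \(\inf_{\gamma\in\Gamma}\sup_{\Bset^2}\mathcal{A}\circ\gamma\le c_{\mathrm{nod}}\), which together with the degree argument completes the proof.
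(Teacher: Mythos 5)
Most of your proposal is the paper's proof in different clothing. The degree-based lower bound and the final assertion are identical to the paper's. For the upper bound, the paper also sweeps the two-parameter family spanned by \(u^+\) and \(u^-\), and your superadditivity estimate \(J(u_+)^\theta + J(u_-)^\theta \le \mathcal{A}(u)^\theta\) with \(\theta = \frac{p-1}{p-2}\) is exactly the paper's inequality \eqref{ineqSplit}. Moreover, your route to it is substantively the same: the paper applies the semigroup property \(I_\alpha = I_{\alpha/2}\ast I_{\alpha/2}\) and Cauchy--Schwarz to get \(A + C \le \sqrt{A(A+2C+B)}\), which after squaring is literally your constraint \(C \le \sqrt{AB}\); and your homogeneous inequality then follows in one line from \((A+C)^2/A \le A + 2C + B\), which is how the paper concludes. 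Your Miranda-type sign argument on a large rectangle is an acceptable substitute for the paper's homotopy of \(\xi\circ\Tilde\gamma\) to an affine map on a quarter-disc.

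The genuine gap is your fibering lemma, and it matters. Strict monotonicity of \(s \mapsto \norm{u_+}^2 - s^{2p-2}A - s^{p-2}t^pC\) only gives, for each fixed \(t\), a unique maximizer \(s = \sigma(t)\) in that fiber (and symmetrically \(t = \tau(s)\)); the critical points of \(\varphi\) are the intersections of two strictly decreasing curves, and coordinate-wise monotonicity cannot show these cross only once. Concretely, testing the two critical-point equations by signs only excludes solutions with \(s\) and \(t\) on the same side of \(1\); for \(s > 1 > t\) it leaves the entire cone \(\frac{p-2}{p}\ln s < -\ln t < \frac{p}{p-2}\ln s\) unresolved. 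Without uniqueness (equivalently, global maximality of \((1,1)\)), your key identity \(\sup_{\Bset^2}\mathcal{A}\circ\gamma = \varphi(1,1) = \mathcal{A}(u)\) is unjustified: any other interior critical point \((s^*,t^*)\) yields \(s^*u_+ + t^*u_- \in \mathcal{N}_{\mathrm{nod}}\), whose action is bounded \emph{below} by \(c_{\mathrm{nod}}\) but by nothing from above, so the sup over your rectangle could exceed \(c_{\mathrm{nod}}+\delta\). The paper closes exactly this hole with its reparametrization \(\Tilde{\gamma}(t_+,t_-) = t_+^{1/p}u^+ + t_-^{1/p}u^-\): then \(\mathcal{A}\circ\Tilde\gamma\) equals a sum of strictly concave powers \(t_\pm^{2/p}\) minus the nonnegative quadratic form \(\frac{1}{2p}\int_{\Rset^N}\bigabs{I_{\alpha/2}\ast(t_+\abs{u^+}^p + t_-\abs{u^-}^p)}^2\), hence is strictly concave, making \((1,1)\) the unique critical point and strict global maximum for free. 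Note this concavity rests on the same positive-definiteness (the quadratic form's matrix is the Gram matrix of \(I_{\alpha/2}\ast\abs{u^\pm}^p\), i.e.\ \(C \le \sqrt{AB}\)) that you invoke elsewhere; your closing remark that ``the two delicate points stem from the same positivity'' identifies the right mechanism, but your proposal never converts it into a proof of uniqueness, and the monotonicity route as described does not do so.
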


In this statement \(\Bset^2\) denotes the closed unit disc in the plane 
\(\Rset^2\) and \(\deg\) is the classical topological degree of Brouwer, 
or equivalently, the winding number (see for example 
\citelist{\cite{Schechter2004}*{chapter 6}\cite{MawhinWillem1989}*{\S 5.3}}).

The continuity of the map \(\xi\) on the subset of constant-sign functions in 
\(H^1 (\Rset^N)\) follows from the Hardy--Littlewood--Sobolev inequality 
\eqref{eqHLS} and 
the classical Sobolev inequality, and requires the assumption \(p > 2\).

The map \(\xi\) is the nonlocal counterpart of a map appearing in the variational characterization of least action nodal solutions by Cerami, Solimini and Struwe for local 
Schr\"odinger type problems \cite{CeramiSoliminiStruwe1986}, which is done in the framework of critical point theory in ordered spaces whereas our minimax principle works in the more classical 
framework of Banach spaces.

\begin{proof}[Proof of proposition~\ref{propositionSimpleVariational}]
We denote the right-hand side in the equality to be proven as \(\Tilde{c}\) and 
we first prove that that 
\( \Tilde{c} \ge c_\mathrm{nod}\). 
Let \(\gamma \in \Gamma\). Since \(\deg (\xi \circ 
\gamma ) = 1\), by the existence property of the 
degree, there exists \(t^* \in \Bset^2\) 
such that \((\xi \circ \gamma) (t^*) = 0\). 
It follows then that \(\gamma (t_*) \in \mathcal{N}_{\mathrm{nod}} = \xi^{-1} 
(0)\) and thus 
\[
 \sup_{\Bset^2} \mathcal{A} \circ \gamma \ge \gamma (t_*) \ge c_{\mathrm{nod}},
\]
so that \(\Tilde{c} \ge c_{\mathrm{nod}}\).

We now prove that \(\Tilde{c} \le c_{\mathrm{nod}}\). 
For a given \(u \in \mathcal{N}_{\mathrm{nod}}\), we define the map 
\(\Tilde{\gamma} : [0, \infty)^2 \to H^1 (\Rset^N)\) for every \((t_+, t_-)\in 
[0, \infty)^2\)
by 
\[
 \Tilde{\gamma} (t_+, t_-) = t_+^\frac{1}{p} u^+ + 
t_-^\frac{1}{p} u^-.
\]
We compute for each \((t_+, t_-) \in [0, \infty)^2\)
\begin{multline}
\label{eqExplicittptm}
  \mathcal{A}\bigl( \Tilde{\gamma} (t_+, t_-)\bigr)
  =\frac{t_+^{2/p}}{2} \int_{\Rset^N} \abs{\nabla u^+}^2 +  \abs{ u^+}^2
+ \frac{t_-^{2/p}}{2} \int_{\Rset^N} \abs{\nabla u^-}^2+\abs{ u^-}^2\\
- \frac{1}{2 p} \int_{\Rset^N} \bigabs{I_{\alpha/2} \ast (t_+ \abs{u^+}^p + t_- 
\abs{u^-}^p)}^2.
\end{multline}
The function \(\mathcal{A} \circ \Tilde{\gamma}\) is thus strictly concave and 
\((\mathcal{A} \circ \Tilde{\gamma})' (1, 1) = 0\). Hence, \((1, 1)\) is the 
unique 
maximum point of the function \(\mathcal{A} \circ \Tilde{\gamma}\).

We also have in particular
\[
 \mathcal{A} \bigl(\Tilde{\gamma} (t_+, 0)\bigr)
 = \frac{t_+^{2/p}}{2} \int_{\Rset^N} \bigl(I_\alpha \ast \abs{u}^p\bigr) 
\abs{u^+}^p - \frac{t_+^2}{2 p}\int_{\Rset^N} \bigl(I_\alpha \ast 
\abs{u^+}^p\bigr) 
\abs{u^+}^p,
\]
and therefore
\begin{equation}
 \label{eqieu}
  \mathcal{A} \bigl(\Tilde{\gamma} (t_+, 0)\bigr) \le \Bigl(\frac{1}{2} - 
\frac{1}{2 p}\Bigr) 
\frac{\Bigl(\displaystyle\int_{\Rset^n} \bigl(I_\alpha \ast \abs{u}^p\bigr) 
\abs{u^+}^p\Bigr)^\frac{p}{p - 1}}{\Bigl(\displaystyle \int_{\Rset^n} 
\bigl(I_\alpha 
\ast 
\abs{u^+}^p\bigr) \abs{u^+}^p\Bigr)^\frac{1}{p - 1}}.
\end{equation}
By the semigroup property of the Riesz potential \(I_\alpha = I_{\alpha/2}\ast 
I_{\alpha/2}\) (see for example \cite{LiebLoss2001}*{theorem 5.9}) and by the 
Cauchy--Schwarz inequality,
\begin{multline}
 \label{eqiea}
  \int_{\Rset^N} \bigl(I_\alpha \ast \abs{u}^p\bigr)  \abs{u^+}^p = 
\int_{\Rset^N} 
(I_{\alpha/2} \ast \abs{u^+}^p)(I_{\alpha/2} \ast \abs{u}^p) \\
  \le \Bigl(\int_{\Rset^N} \bigabs{I_{\alpha/2} \ast 
\abs{u}^p}^2\Bigr)^\frac{1}{2} \Bigl(\int_{\Rset^N} \bigabs{I_{\alpha/2} \ast 
\abs{u^+}^p}^2\Bigr)^\frac{1}{2}\\
  = \Bigl(\int_{\Rset^N}\bigl(I_\alpha \ast \abs{u}^p\bigr)  \abs{u}^p 
)\Bigr)^\frac{1}{2}\Bigl(\int_{\Rset^N} (I_\alpha \ast 
\abs{u^+}^p)  \abs{u^+}^p )\Bigr)^\frac{1}{2},
\end{multline}
and therefore by \eqref{eqieu} and \eqref{eqiea}
\[
  \mathcal{A} \bigl(\Tilde{\gamma} (t_+, 0)\bigr) \le \Bigl(\frac{1}{2} - 
\frac{1}{2 p}\Bigr) 
\Bigl(\int_{\Rset^n} \bigl(I_\alpha \ast \abs{u}^p\bigr) 
\abs{u^+}^p\Bigr)^\frac{p - 2}{p - 1} 
\Bigl(\int_{\Rset^n} \bigl(I_\alpha \ast \abs{u}^p\bigr) 
\abs{u}^p\Bigr)^\frac{1}{p - 1}
\]
We deduce therefrom that for every \((t_+, t_-) \in [0, \infty)^2\),
\begin{equation}
\label{ineqSplit}
  \mathcal{A} \bigl(t_+^{1/p} u^+ \bigr)^\frac{p - 1}{p - 2} 
  + \mathcal{A} \bigl( t_-^{1/p} u^-\bigr)^\frac{p - 1}{p - 2}
  \le \mathcal{A} (u)^\frac{p - 1}{p - 2}.
\end{equation}
Since \(u_\pm \ne 0\), we have
\[
 \sup_{t_\pm \in [0, \infty)} \mathcal{A} \bigl(t_\pm^{1/p} u^\pm\bigr) \ge c_0,
\]
we conclude that 
\[
 \sup_{t \in \partial ([0, \infty)^2)} (\mathcal{A} \circ 
\Tilde{\gamma})^\frac{p - 1}{p - 2} 
 \le \mathcal{A} (u)^\frac{p - 1}{p - 2} - c_0^\frac{p - 1}{p - 2}.
\]
Moreover, we have by \eqref{eqExplicittptm}
\[
 \lim_{\abs{t} \to \infty} \mathcal{A} \bigl(\Tilde{\gamma}(t)\bigr) = - 
\infty.
\]

It remains to compute the degree of the map \(\xi \circ \Tilde{\gamma}\) on a 
suitable set homeomorphic to \(\Bset^2\). We compute for each \((t_+, t_-) \in 
[0, \infty)^2\),
 since \(u \in \mathcal{N}_\mathrm{nod}\)
\begin{multline*}
 \bigscalprod{(t_+, t_-)}{\xi (\Tilde{\gamma} (t_+, t_-))}\\
 = t_+^{3 - \frac{2}{p}} \frac{\displaystyle \int_{\Rset^N} (I_\alpha \ast 
\abs{u^+}^p) \abs{u^+}^p}{\displaystyle \int_{\Rset^N} (I_\alpha \ast 
\abs{u}^p) \abs{u^+}^p}
+
t_-^{3 - \frac{2}{p}} \frac{\displaystyle \int_{\Rset^N} (I_\alpha \ast 
\abs{u^-}^p) \abs{u^-}^p}{\displaystyle \int_{\Rset^N} (I_\alpha \ast 
\abs{u}^p) \abs{u^-}^p} - t_+ - t_-
\end{multline*}
Since \(p > 2\), we can now take \(R > \sqrt{2}\) large enough so that if \(t 
\in [0, \infty)^2 \cap \partial B_R\), then 
\begin{align*}
 \bigscalprod{t}{\xi (\Tilde{\gamma} (t))} & > 0 &
 &\text{ and } &
 \bigl(\mathcal{A} \circ \Tilde{\gamma} (t)\bigr)^\frac{p - 1}{p - 2}
  \le \mathcal{A} (u)^\frac{p - 1}{p - 2} 
  - c_0^\frac{p - 1}{p - 2}.
\end{align*}
We now define the homotopy \(H : [0, 1] \times [0, \infty)^2\) for each 
\((\tau, t_+, t_-) \in [0, 1] \times [0, \infty)^2\) by 
\[
  H (\tau, t)
  = \tau (\xi \circ \Tilde{\gamma})(t) + (1 - \tau) (t_+ - 1, t_- - 1).
\]
By the choice of \(R\), for every \((\tau, t) \in [0, 1] \times \partial ([0, 
\infty)^2 \cap B_R)\), \(H (\tau, t) \ne 0\), and thus by the homotopy 
invariance property of the degree, \(\deg (\xi \circ \Tilde{\gamma} 
\vert_{(0, \infty)^2 \cap B_R}) = 1\).
If we set \(\gamma = \Tilde{\gamma} \circ \Phi\), where \(\Phi : \Bset^2 
\to [0, \infty) \cap \Bar{B}_R\) is an orientation preserving homeomorphism,
we have \(\gamma \in \Gamma\) and \(\sup_{\Bset^2} \mathcal{A} \circ \gamma = 
\mathcal{A} (u)\).

We have thus proved that if \(u \in \mathcal{N}_\mathrm{nod}\) and if 
\(\mathcal{A} (u)^\frac{p - 1}{p - 2} \le c_\mathrm{nod}^\frac{p - 1}{p - 2} 
+ \varepsilon\), then 
\[
 \mathcal{A} (u) \ge \Tilde{c},
\]
from which we deduce that \(c_{\mathrm{nod}} \ge \Tilde{c}\).
\end{proof}

We would like to point out that the inequality \eqref{ineqSplit} in the proof 
of proposition~\ref{propositionSimpleVariational} gives a \emph{lower bound} on 
the level \(c_\mathrm{nod}\) that refines the degeneracy given for 
\(p > 2\) by theorem~\ref{theoremDegeneracy}.

\begin{corollary}
\label{corollaryLowerBound}
If \(\frac{N - 2}{N + \alpha} \le  \frac{1}{p} \le 
\frac{1}{2}\), then 
\[
  c_{\mathrm{nod}} \ge 2^{\frac{p - 2}{p - 1}} c_0.
\]
\end{corollary}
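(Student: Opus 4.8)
The plan is to read the bound off directly from the inequality \eqref{ineqSplit}, which the authors have already isolated as the carrier of a lower bound on \(c_{\mathrm{nod}}\). I would treat the two ends of the exponent range separately, since \(\frac{p-1}{p-2}\) degenerates at \(p = 2\).

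Assume first the strict inequality \(\frac{1}{p} < \frac{1}{2}\), so that \(p > 2\) and \(\frac{p-1}{p-2} > 0\). Fix \(u \in \mathcal{N}_{\mathrm{nod}}\). As recorded in the proof of proposition~\ref{propositionSimpleVariational}, for each sign the curve \(t \mapsto t^{1/p} u^{\pm}\) meets the Nehari manifold \(\mathcal{N}_0\) precisely at the unique maximum point of \(\mathcal{A}\) along it, whence
\[
 \sup_{t \ge 0} \mathcal{A}\bigl(t^{1/p} u^{\pm}\bigr) \ge c_0.
\]
Let \(t_+, t_-\) realize these two maxima and plug this pair into \eqref{ineqSplit}. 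Since \(x \mapsto x^{(p-1)/(p-2)}\) is increasing on \([0, \infty)\) and \(c_0 > 0\), each of the two summands on the left is bounded below by \(c_0^{(p-1)/(p-2)}\), so that
\[
 2\, c_0^{\frac{p-1}{p-2}} \le \mathcal{A}(u)^{\frac{p-1}{p-2}}.
\]
Raising both sides to the positive power \(\frac{p-2}{p-1}\) yields \(\mathcal{A}(u) \ge 2^{(p-2)/(p-1)} c_0\), and taking the infimum over \(u \in \mathcal{N}_{\mathrm{nod}}\) gives the claim throughout this range.

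For the remaining endpoint \(\frac{1}{p} = \frac{1}{2}\), that is \(p = 2\), the exponent \(\frac{p-1}{p-2}\) is undefined and \eqref{ineqSplit} is unavailable; however, the target constant degenerates to \(2^{(p-2)/(p-1)} = 1\), so the assertion reduces to \(c_{\mathrm{nod}} \ge c_0\). This is exactly proposition~\ref{propositionNodNondegenerate}, obtained from the inclusion \(\mathcal{N}_{\mathrm{nod}} \subset \mathcal{N}_0\), and so the endpoint is covered trivially.

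Once \eqref{ineqSplit} is granted, the argument is essentially immediate, so I expect no substantial obstacle. The only points deserving care are the justification that each constrained supremum is attained at a point of \(\mathcal{N}_0\) (hence is bounded below by \(c_0\)) and the degenerate behaviour of the exponents at \(p = 2\), which is what forces the separate, elementary treatment given above.
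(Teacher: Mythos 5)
Your proof is correct and follows exactly the route the paper intends: the corollary is read off from inequality \eqref{ineqSplit} by choosing \(t_\pm\) to maximize \(\mathcal{A}\bigl(t_\pm^{1/p}u^\pm\bigr)\), using that each supremum is attained on \(\mathcal{N}_0\) and hence is at least \(c_0\). Your separate, elementary treatment of the endpoint \(p = 2\) (where \(\tfrac{p-1}{p-2}\) degenerates and the claim reduces to \(c_{\mathrm{nod}} \ge c_0\), i.e.\ proposition~\ref{propositionNodNondegenerate}) is a sensible completion of a detail the paper leaves implicit.
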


In particular, corollary \eqref{corollaryLowerBound} allows
theorem~\ref{theoremDegeneracy} to hold when \(p = 2\).

\begin{proposition}[Existence of a Palais-Smale sequence]
\label{propositionNodalPalaisSmaleExistence}
If \(\frac{N - 2}{N + \alpha} \le \frac{1}{p} < \frac{1}{2}\), then 
there exists a sequence \((u_n)_{n \in \N}\) in \(H^1 (\Rset^N)\) 
such that 
\begin{align*}
  \mathcal{A} (u_n) & \to c_{\mathrm{nod}}, &
  & \dist (u_n, \mathcal{N}_{\mathrm{nod}}) \to 0 & 
  &\text{ and} &
  \mathcal{A}' (u_n) & \to 0 \quad \text{in \(H^1 (\Rset^N)'\)},
\end{align*}
as \(n \to \infty\).
\end{proposition}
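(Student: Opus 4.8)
The plan is to extract the sequence directly from the minimax characterisation of proposition~\ref{propositionSimpleVariational} by a single application of the quantitative deformation lemma, exploiting that \(\mathcal{N}_{\mathrm{nod}} = \xi^{-1}(0)\) is a \emph{dual set}: every admissible path is forced to meet it. I would fix once and for all \(\varepsilon_0 \in \bigl(0, c_0^{\frac{p-1}{p-2}}\bigr)\) and work with the corresponding class \(\Gamma\), for which proposition~\ref{propositionSimpleVariational} gives \(c_{\mathrm{nod}} = \inf_{\gamma \in \Gamma} \sup_{\Bset^2} \mathcal{A} \circ \gamma\) and \(\gamma(\Bset^2) \cap \mathcal{N}_{\mathrm{nod}} \ne \emptyset\) for every \(\gamma \in \Gamma\). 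Two features will be used repeatedly: on \(\mathcal{N}_{\mathrm{nod}}\) one has \(\mathcal{A} \ge c_{\mathrm{nod}}\) by definition of \(c_{\mathrm{nod}}\); and, since \(\varepsilon_0 < c_0^{\frac{p-1}{p-2}}\), the boundary condition in the definition of \(\Gamma\) keeps \(\mathcal{A} \circ \gamma \le c_{\mathrm{nod}} - \kappa\) on \(\partial \Bset^2\) for some constant \(\kappa > 0\) independent of \(\gamma\).

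I would then argue by contradiction: if no sequence with the three announced properties existed, there would be \(\bar\varepsilon, \rho, \mu > 0\) such that \(\norm{\mathcal{A}'(u)} \ge \mu\) for every \(u \in H^1(\Rset^N)\) with \(\abs{\mathcal{A}(u) - c_{\mathrm{nod}}} \le \bar\varepsilon\) and \(\dist(u, \mathcal{N}_{\mathrm{nod}}) \le \rho\). Applying the quantitative deformation lemma \cite{Willem1996}*{lemma 2.3} at the level \(c_{\mathrm{nod}}\) with \(S = \{u : \dist(u, \mathcal{N}_{\mathrm{nod}}) \le \delta\}\), and choosing \(\delta \le \rho/3\) together with \(\varepsilon > 0\) so small that \(2\varepsilon \le \bar\varepsilon\), \(8\varepsilon/\delta \le \mu\) and \(2\varepsilon < \kappa\), I obtain a map \(\eta \in C([0,1] \times H^1(\Rset^N), H^1(\Rset^N))\) that does not increase \(\mathcal{A}\), moves each point by at most \(\delta\), coincides with the identity outside \(\{\,c_{\mathrm{nod}} - 2\varepsilon \le \mathcal{A} \le c_{\mathrm{nod}} + 2\varepsilon\,\} \cap S_{2\delta}\), and maps \(\{\mathcal{A} \le c_{\mathrm{nod}} + \varepsilon\} \cap S\) into \(\{\mathcal{A} \le c_{\mathrm{nod}} - \varepsilon\}\).

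Finally I would choose \(\gamma \in \Gamma\) with \(\sup_{\Bset^2} \mathcal{A} \circ \gamma \le c_{\mathrm{nod}} + \varepsilon\) and set \(\tilde\gamma = \eta(1, \cdot) \circ \gamma\). Because \(2\varepsilon < \kappa\), the deformation fixes \(\gamma\) on \(\partial \Bset^2\), so the boundary requirements defining \(\Gamma\) persist and, by homotopy invariance of the degree, \(\deg(\xi \circ \tilde\gamma) = 1\); thus \(\tilde\gamma \in \Gamma\) and there is an interior \(t^* \in \Bset^2\) with \(\tilde\gamma(t^*) \in \mathcal{N}_{\mathrm{nod}}\). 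The decisive step is to trace this crossing point back along the flow: from \(\norm{\tilde\gamma(t^*) - \gamma(t^*)} \le \delta\) and \(\tilde\gamma(t^*) \in \mathcal{N}_{\mathrm{nod}}\) one gets \(\dist(\gamma(t^*), \mathcal{N}_{\mathrm{nod}}) \le \delta\), so that \(\gamma(t^*) \in \{\mathcal{A} \le c_{\mathrm{nod}} + \varepsilon\} \cap S\) and hence \(\mathcal{A}(\tilde\gamma(t^*)) \le c_{\mathrm{nod}} - \varepsilon\). This contradicts \(\tilde\gamma(t^*) \in \mathcal{N}_{\mathrm{nod}}\), on which \(\mathcal{A} \ge c_{\mathrm{nod}}\), and the proposition follows.

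I expect the localisation to be the only genuinely delicate point. One cannot push the global supremum below \(c_{\mathrm{nod}}\), since this is a true minimax value; the contradiction must instead be produced at the crossing of \(\mathcal{N}_{\mathrm{nod}}\) that the degree forces upon \(\tilde\gamma\). Making this effective requires, on the one hand, that \(\tilde\gamma\) remain admissible — which hinges on the boundary gap \(\kappa\) provided by the \(c_0^{\frac{p-1}{p-2}}\) term in \(\Gamma\), and therefore on the hypothesis \(p > 2\) — and, on the other hand, a quantitative control of the flow displacement ensuring that the crossing point of \(\tilde\gamma\) still originates inside the region where the gradient estimate is available.
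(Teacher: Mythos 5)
Your proposal is correct and is essentially the paper's own argument: the paper's proof consists of taking the class \(\Gamma\) of proposition~\ref{propositionSimpleVariational} with \(\varepsilon < c_0^{\frac{p-1}{p-2}}\) and invoking the location theorem \cite{Willem1996}*{theorem 2.20} (see also \cite{BrezisNirenberg1991}*{theorem 2}) as a black box, and your quantitative-deformation argument is precisely the standard proof of that theorem, specialized to the dual set \(\mathcal{N}_{\mathrm{nod}} = \xi^{-1}(0)\). The ingredients you isolate --- the dual-set property \(\gamma(\Bset^2) \cap \mathcal{N}_{\mathrm{nod}} \ne \emptyset\), the uniform boundary gap \(\kappa\) coming from the \(c_0^{\frac{p-1}{p-2}}\) term (hence from \(p > 2\)), and the displacement bound \(\delta\) that lets you pull the crossing point back into the region where the gradient lower bound holds --- are exactly what the cited theorem packages, so your write-up is a correct self-contained expansion of the paper's one-line proof.
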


\begin{proof}
We take \(\Gamma\) given by proposition~\ref{propositionSimpleVariational} with 
\(\varepsilon < c_0^\frac{p - 1}{p - 2}\).
The location theorem \cite{Willem1996}*{theorem 2.20} (see also 
\cite{BrezisNirenberg1991}*{theorem 2}\cite{Schechter2009}*{theorem 2.12}) is 
applicable and gives the conclusion.
\end{proof}

\subsection{Convergence of the Palais--Smale sequence}
We prove that Palais--Smale sequences at the level \(c_\mathrm{nod}\) and 
localized near the Nehari nodal set \(\mathcal{N}_{\mathrm{nod}}\) converge 
strongly up to a subsequence and up to translations.

\begin{proposition}
\label{propositionNodalPalaisSmaleCondition}
Let \((u_n)_{n \in \N}\) be a sequence in \(H^1_{\mathrm{odd}} (\Rset^N)\) such 
that, as \(n \to \infty\),
\begin{align*}
  \mathcal{A} (u_n) & \to c_{\mathrm{nod}}, &
  & \dist (u_n, \mathcal{N}_{\mathrm{nod}}) \to 0 &
  &\text{ and} &
  \mathcal{A}' (u_n) & \to 0 \quad \text{in \(H^1 (\Rset^N)'\)}.
\end{align*}
If \(\frac{N - 2}{N + \alpha} < \frac{1}{p} < \frac{1}{2}\) and if 
\[
  c_{\mathrm{nod}} < 2 c_0,
\]
then there exists a sequence of points \((a_n)_{n \in \N}\) in \(\Rset^N\) such 
that the subsequence
\(\bigl(u_{n_k} (\cdot - a_{n_k})\bigr)_{k \in \N}\) converges strongly in 
\(H^1 
(\Rset^N)\) to \(u \in H^1 (\Rset^N)\).
Moreover
\begin{align*}
  \mathcal{A} (u) &= c_{\mathrm{nod}}, &
  u &\in \mathcal{N}_{\mathrm{nod}}&
  &\text{ and }&
  \mathcal{A}'(u) &= 0 \quad \text{in \(H^1 (\Rset^N)'\)}.
\end{align*}
\end{proposition}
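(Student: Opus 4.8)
The plan is to mirror the structure of the proof of proposition~\ref{propositionOddPalaisSmaleCondition}, adapting each step to the nodal setting where the sign structure $u^+, u^-$ replaces the odd-reflection structure. First I would establish boundedness of $(u_n)_{n \in \N}$ in $H^1(\Rset^N)$. Exactly as in \eqref{eqNorm}, from $\mathcal{A}(u_n) \to c_{\mathrm{nod}}$ and $\mathcal{A}'(u_n) \to 0$ one gets
\[
 \Bigl(\tfrac{1}{2} - \tfrac{1}{2p}\Bigr)\int_{\Rset^N} \abs{\nabla u_n}^2 + \abs{u_n}^2 = c_{\mathrm{nod}} + o\bigl(\norm{u_n}\bigr),
\]
so the sequence is bounded. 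The condition $\dist(u_n, \mathcal{N}_{\mathrm{nod}}) \to 0$ supplies points $w_n \in \mathcal{N}_{\mathrm{nod}}$ with $\norm{u_n - w_n} \to 0$; since $w_n^+ \ne 0 \ne w_n^-$ with a uniform quantitative lower bound coming from $c_{\mathrm{nod}} > 0$ (proposition~\ref{propositionNodNondegenerate}), both $u_n^+$ and $u_n^-$ are bounded away from zero in $H^1$.

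Next I would prove a non-vanishing statement: there exists $R > 0$ such that $\liminf_{n\to\infty} \sup_{a \in \Rset^N} \int_{B_R(a)} \abs{u_n}^{\frac{2Np}{N+\alpha}} > 0$. The argument is by contradiction: if the sup vanishes, then by the Lions-type vanishing lemma cited in the excerpt, $\int_{\Rset^N}(I_\alpha \ast \abs{u_n}^p)\abs{u_n}^p \to 0$, forcing $\norm{u_n} \to 0$ through the Nehari-type relation, which contradicts the uniform lower bounds on $u_n^\pm$. This gives translation points $(a_n)_{n \in \N}$ so that, after translating and passing to a subsequence, $u_n \weakto u$ weakly in $H^1(\Rset^N)$ with $u \ne 0$; moreover the weak limit inherits $u^+ \ne 0$ or $u^- \ne 0$, and I would argue using the quantitative separation that in fact the mass does not split, so that both $u^+ \ne 0$ and $u^- \ne 0$. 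Passing to the limit in $\mathcal{A}'(u_n) \to 0$ via Rellich--Kondrashov and weak continuity of the nonlocal term, exactly as in the odd case, yields $\mathcal{A}'(u) = 0$, whence $u \in \mathcal{N}_{\mathrm{nod}}$ once both signs survive.

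The crux, and the step I expect to be the main obstacle, is showing that the weak limit genuinely changes sign, i.e.\ that neither $u^+$ nor $u^-$ is lost in the limit, and that the convergence is strong. Unlike the odd case, there is no symmetry to prevent the positive and negative parts from escaping to infinity along different translation sequences $a_n$; a priori the energy could split into a positive-part bubble and a negative-part bubble drifting apart, each carrying action $\ge c_0$. This is precisely where the hypothesis $c_{\mathrm{nod}} < 2c_0$ is decisive: if splitting occurred, the asymptotic additivity of the action (together with the decoupling of the cross nonlocal interaction term $\int (I_\alpha \ast \abs{u_n^+}^p)\abs{u_n^-}^p \to 0$, established by an estimate analogous to \eqref{eqCrossIntegralVanishing} and which is the content of lemma~\ref{lemmaNewEstimate}) would force $c_{\mathrm{nod}} \ge 2c_0$, a contradiction. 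I would therefore quantify the would-be dichotomy: suppose $u^-$ vanishes in the limit; rescaling $u_n^+$ onto $\mathcal{N}_0$ and $u_n^-$ onto $\mathcal{N}_0$ separately and using the concavity/splitting inequality \eqref{ineqSplit} from proposition~\ref{propositionSimpleVariational}, the two limiting actions each dominate $c_0$, again contradicting $c_{\mathrm{nod}} < 2c_0$. Hence both parts survive, $u \in \mathcal{N}_{\mathrm{nod}}$, and $\mathcal{A}(u) \ge c_{\mathrm{nod}}$. Finally, combining weak lower semicontinuity,
\[
 c_{\mathrm{nod}} = \lim_{n\to\infty}\Bigl(\tfrac{1}{2}-\tfrac{1}{2p}\Bigr)\int_{\Rset^N}\abs{\nabla u_n}^2 + \abs{u_n}^2 \ge \Bigl(\tfrac{1}{2}-\tfrac{1}{2p}\Bigr)\int_{\Rset^N}\abs{\nabla u}^2 + \abs{u}^2 = \mathcal{A}(u) \ge c_{\mathrm{nod}},
\]
forces equality throughout, which upgrades the weak convergence to strong convergence of the norms and hence in $H^1(\Rset^N)$, and gives $\mathcal{A}(u) = c_{\mathrm{nod}}$ with $u$ a weak solution lying in $\mathcal{N}_{\mathrm{nod}}$.
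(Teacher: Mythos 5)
Your skeleton (boundedness, non-vanishing, weak limit, lower semicontinuity) matches the paper's, but the crux step is organized in a way that genuinely fails. You first pick translations \(a_n\) from the concentration of \(\abs{u_n}\) and then try to rule out loss of, say, \(u^-\) in the weak limit by arguing that the cross term \(\int_{\Rset^N}(I_\alpha \ast \abs{u_n^+}^p)\abs{u_n^-}^p\) decouples. That implication is false: knowing only that \(u_n^- \weakto 0\) after translation by \(a_n\) does not give the smallness hypothesis under which lemma~\ref{lemmaNewEstimate} controls the cross term. Concretely, the sequence could consist of a positive bubble at \(a_n\) together with an adjacent positive--negative pair of bubbles at \(b_n\) with \(\abs{b_n - a_n} \to \infty\): your weak limit then has \(u^- = 0\), yet the cross term stays bounded away from zero, and no splitting contradiction is available. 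Your fallback, inequality \eqref{ineqSplit}, cannot rescue this: because of the exponents \(\frac{p-1}{p-2}\) it only yields \(c_{\mathrm{nod}} \ge 2^{\frac{p-2}{p-1}} c_0\) (that is exactly corollary~\ref{corollaryLowerBound}), which is strictly weaker than \(2c_0\) and contradicts nothing. The paper's proof avoids all of this by running the dichotomy \emph{before} translating, on the product concentration function \(\sup_{a \in \Rset^N} \int_{B_R(a)} \abs{u_n^+}^{\frac{2Np}{N+\alpha}} \int_{B_R(a)} \abs{u_n^-}^{\frac{2Np}{N+\alpha}}\): if this tends to \(0\) for every \(R\), then lemma~\ref{lemmaNewEstimate} (whose hypothesis is precisely this product smallness) kills the cross term globally, the action splits additively over \(t_{n,\pm} u_n^\pm \in \mathcal{N}_0\) with \(t_{n,\pm} \to 1\), and one gets \(c_{\mathrm{nod}} \ge 2c_0\); otherwise one translates to balls on which \emph{both} \(u_n^+\) and \(u_n^-\) carry mass, so both signs automatically survive in the weak limit by Rellich--Kondrashov. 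Choosing the translations to follow this product concentration, rather than the concentration of \(\abs{u_n}\), is the missing idea.

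There is a second, smaller gap at the start: you assert that \(\dist(u_n, \mathcal{N}_{\mathrm{nod}}) \to 0\) plus a quantitative bound on \(\mathcal{N}_{\mathrm{nod}}\) makes \(u_n^\pm\) bounded away from zero in \(H^1(\Rset^N)\). The transfer from nearby points \(w_n \in \mathcal{N}_{\mathrm{nod}}\) to \(u_n\) is not automatic, because the map \(u \mapsto \int_{\Rset^N} \abs{\nabla u^\pm}^2 + \abs{u^\pm}^2\) is \emph{not} uniformly continuous on bounded sets of \(H^1(\Rset^N)\) and the \(w_n\) vary with \(n\) --- a trap the paper explicitly flags. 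The paper handles this (its claims 2 and 3) by proving that the nonlocal functional \(u \mapsto \int_{\Rset^N}(I_\alpha \ast \abs{u}^p)\abs{u^\pm}^p\) \emph{is} uniformly continuous on bounded sets (this is where \(p \ge 2\) enters), establishing the uniform lower bound on \(\mathcal{N}_{\mathrm{nod}}\) for that quantity, transferring it to \(u_n\), and only then converting it into an \(H^1\) bound via \(\dualprod{\mathcal{A}'(u_n)}{u_n^\pm} \to 0\). Alternatively one can transfer lower bounds through the \(L^{\frac{2Np}{N+\alpha}}\) norm, using the pointwise inequality \(\abs{u_n^\pm - w_n^\pm} \le \abs{u_n - w_n}\); either way, an argument is required where your proposal has none, and the same uniform continuity is also what the paper uses later to prove \(t_{n,\pm} \to 1\) in the splitting alternative.
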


Palais--Smale conditions have been already proved by concentration--com\-pact\-ness arguments for local semilinear elliptic problems \citelist{\cite{FurtadoMaiaMedeiros2008}\cite{CeramiSoliminiStruwe1986}}.

\begin{proof}[Proof of proposition~\ref{propositionNodalPalaisSmaleCondition}]
We shall proceed through several claims on the sequence \((u_n)_{n \in \N}\).
\setcounter{claim}{0}

\begin{claim}
The sequence \((u_n)_{n \in \N}\) is bounded in the space \(H^1 (\Rset^N)\).
\end{claim}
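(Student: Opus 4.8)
The claim asserts that the Palais--Smale sequence $(u_n)_{n\in\N}$ is bounded in $H^1(\Rset^N)$. The plan is to exploit the standard interplay between the action $\mathcal{A}(u_n)$ and the Nehari pairing $\dualprod{\mathcal{A}'(u_n)}{u_n}$, exactly as in the computation \eqref{eqNorm} used for the odd case in proposition~\ref{propositionOddPalaisSmaleCondition}. The key algebraic identity is that for any $u \in H^1(\Rset^N)$ one has
\[
  \mathcal{A}(u) - \frac{1}{2p}\dualprod{\mathcal{A}'(u)}{u}
  = \Bigl(\frac{1}{2} - \frac{1}{2p}\Bigr)\int_{\Rset^N} \abs{\nabla u}^2 + \abs{u}^2,
\]
since the nonlocal term in $\mathcal{A}(u)$ carries the factor $\frac{1}{2p}$ while the same term appears in $\dualprod{\mathcal{A}'(u)}{u}$ with factor $1$, and $\frac{1}{2p}\cdot 2p = 1$, so the two nonlocal contributions cancel exactly. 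Because $\frac{1}{p} < \frac{1}{2}$, the coefficient $\frac{1}{2} - \frac{1}{2p}$ is strictly positive, which is what makes this identity useful as an a priori bound on the $H^1$ norm.

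First I would set $r_n = \bigl(\int_{\Rset^N} \abs{\nabla u_n}^2 + \abs{u_n}^2\bigr)^{1/2} = \norm{u_n}_{H^1}$. Applying the identity above to $u_n$ and using the two hypotheses $\mathcal{A}(u_n) \to c_{\mathrm{nod}}$ and $\mathcal{A}'(u_n) \to 0$ in $H^1(\Rset^N)'$, I would estimate
\[
  \Bigl(\frac{1}{2} - \frac{1}{2p}\Bigr) r_n^2
  = \mathcal{A}(u_n) - \frac{1}{2p}\dualprod{\mathcal{A}'(u_n)}{u_n}
  \le \mathcal{A}(u_n) + \frac{1}{2p}\, \norm{\mathcal{A}'(u_n)}_{H^1(\Rset^N)'}\, r_n.
\]
Since $\mathcal{A}(u_n)$ is convergent it is bounded, say $\mathcal{A}(u_n) \le M$ for all $n$, and since $\mathcal{A}'(u_n) \to 0$ we have $\norm{\mathcal{A}'(u_n)}_{H^1(\Rset^N)'} \le \delta_n$ with $\delta_n \to 0$. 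This yields a quadratic inequality of the form $\kappa\, r_n^2 \le M + \frac{\delta_n}{2p}\, r_n$ with $\kappa = \frac{1}{2} - \frac{1}{2p} > 0$.

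The conclusion is then purely elementary: the inequality $\kappa\, r_n^2 - \frac{\delta_n}{2p}\, r_n - M \le 0$ forces $r_n$ to lie below the larger root of the corresponding quadratic, which is bounded uniformly in $n$ because $\delta_n$ is bounded (indeed $\delta_n \to 0$) and $\kappa, M$ are fixed constants. Hence $\sup_n r_n < \infty$, i.e. $(u_n)_{n\in\N}$ is bounded in $H^1(\Rset^N)$. There is no real obstacle here; the only point requiring the standing hypothesis is the strict positivity of $\kappa$, which is precisely where the restriction $\frac{1}{p} < \frac{1}{2}$ (equivalently $p > 2$) enters. I would note that neither the nondegeneracy $c_{\mathrm{nod}} > 0$ nor the localization $\dist(u_n, \mathcal{N}_{\mathrm{nod}}) \to 0$ is needed for this first claim; those hypotheses will instead be used in the subsequent claims to rule out vanishing and to identify the weak limit.
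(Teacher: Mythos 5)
Your proof is correct and follows exactly the paper's argument: the same cancellation identity \(\mathcal{A}(u_n) - \frac{1}{2p}\dualprod{\mathcal{A}'(u_n)}{u_n} = \bigl(\frac{1}{2}-\frac{1}{2p}\bigr)\norm{u_n}_{H^1}^2\), the same estimate of the pairing by \(\norm{\mathcal{A}'(u_n)}_{H^1(\Rset^N)'}\norm{u_n}_{H^1}\), and the resulting quadratic inequality, which the paper compresses into its \(o\bigl(\norm{u_n}_{H^1}\bigr)\) notation before saying "from which the claim follows." The only quibble is your closing remark: the positivity of \(\frac{1}{2}-\frac{1}{2p}\) needs only \(p>1\), which is automatic since \(\frac{1}{p}<\frac{N}{N+\alpha}<1\), so the restriction \(p>2\) is not actually used in this claim at all --- it enters later in the proposition, for instance in the uniform-continuity claim where the bound \(\bigabs{\abs{s}^p-\abs{t}^p}\le \frac{p}{2}\abs{s-t}\bigl(\abs{s}^{p-1}+\abs{t}^{p-1}\bigr)\) requires \(p\ge 2\).
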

\begin{proofclaim}
We write, as \(n \to \infty\),
\[
\begin{split}
  \Bigl(\frac{1}{2} - \frac{1}{2 p}\Bigr)\int_{\Rset^N} \abs{\nabla u_n}^2 + 
\abs{u_n}^2 &= \mathcal{A} (u_n) - \frac{1}{2 p} \dualprod{\mathcal{A}' 
(u_n)}{u_n}\\
  &= \mathcal{A} (u_n) + o \biggl( \Bigl(\int_{\Rset^N} \abs{\nabla u_n}^2 + 
\abs{u_n}^2 \Bigr)^\frac{1}{2}\biggr).
\end{split}
\]
from which the claim follows.
\end{proofclaim}

We now show that neither positive nor the negative parts of the sequence \((u_n)_{n \in 
\N}\) tend to \(0\).

\begin{claim}
\label{claimUniformContinuity}%
The functional \(u \in H^1 (\Rset^N) \mapsto \int_{\Rset^N} 
(I_\alpha \ast \abs{u}^p) \abs{u^\pm}^p\) is uniformly continuous on bounded 
subsets of the space \(H^1 (\Rset^N)\).
\end{claim}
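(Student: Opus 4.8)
The plan is to exhibit the functional $F_\pm(u) = \int_{\Rset^N}(I_\alpha \ast \abs{u}^p)\abs{u^\pm}^p$ as the composition of a bounded bilinear form with two nonlinear substitution operators that are Lipschitz on bounded sets, and then to conclude by bilinearity. Set $s = \frac{2N}{N + \alpha}$ and $q = p s = \frac{2 N p}{N + \alpha}$. The admissibility hypotheses $\frac{N - 2}{N + \alpha} \le \frac{1}{p} \le \frac{N}{N + \alpha}$ (which are implied by those in force, since $\frac{1}{p} < \frac{1}{2} \le \frac{N}{N + \alpha}$) translate respectively into $2 \le q \le \frac{2 N}{N - 2}$, so that the Sobolev embedding $H^1 (\Rset^N) \hookrightarrow L^q (\Rset^N)$ holds, and into $1 < s < \frac{N}{\alpha}$, so that the Hardy--Littlewood--Sobolev inequality \eqref{eqHLS} applies with exponent $s$. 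Verifying these inclusions carefully from the exponent assumptions is the first thing I would do.

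Next I would record the boundedness of the nonlocal bilinear form. For $f, g \in L^s (\Rset^N)$ set $B (f, g) = \int_{\Rset^N} (I_\alpha \ast f)\, g$. Since the conjugate exponent of $\frac{2 N}{N - \alpha}$ is exactly $s = \frac{2 N}{N + \alpha}$, Hölder's inequality together with \eqref{eqHLS} yields $\abs{B (f, g)} \le C \norm{f}_{L^s} \norm{g}_{L^s}$. In particular $B$ is a bounded bilinear form, whence
\[
 \abs{B (f, g) - B (f', g')} \le C \bigl( \norm{f}_{L^s} \norm{g - g'}_{L^s} + \norm{g'}_{L^s} \norm{f - f'}_{L^s} \bigr).
\]

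I would then show that the substitution operators $N_0 (u) = \abs{u}^p$ and $N_\pm (u) = \abs{u^\pm}^p$ are Lipschitz from bounded subsets of $H^1 (\Rset^N)$ into $L^s (\Rset^N)$. The elementary inequality $\bigabs{\abs{a}^p - \abs{b}^p} \le p (\abs{a}^{p - 1} + \abs{b}^{p - 1}) \abs{a - b}$, valid since $p > 2 \ge 1$, combined with $\abs{a^\pm - b^\pm} \le \abs{a - b}$, reduces the estimate to Hölder's inequality with exponents $\frac{q}{p - 1}$ and $q$ (note $q > p$ because $s > 1$, so these are genuine conjugate-type exponents). This gives, for $N$ any of $N_0, N_\pm$,
\[
 \norm{N (u) - N (v)}_{L^s} \le p \bigl( \norm{u}_{L^q}^{p - 1} + \norm{v}_{L^q}^{p - 1} \bigr) \norm{u - v}_{L^q},
\]
and the embedding $H^1 \hookrightarrow L^q$ makes each $N$ Lipschitz on every ball of $H^1 (\Rset^N)$.

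Finally, writing $F_\pm (u) = B \bigl( N_0 (u), N_\pm (u) \bigr)$ and combining the three estimates, I obtain for $u, v$ in a ball of radius $M$ in $H^1 (\Rset^N)$ a bound of the form $\abs{F_\pm (u) - F_\pm (v)} \le C (M) \norm{u - v}_{H^1}$, which gives local Lipschitz continuity and in particular the asserted uniform continuity on bounded sets. I expect no substantial obstacle: the only delicate points are the exponent bookkeeping (placing $s$ in the range $(1, \frac{N}{\alpha})$ of \eqref{eqHLS} and $q$ in the Sobolev range) and the correct Hölder pairing in the Lipschitz estimate for the substitution operators, both of which are routine.
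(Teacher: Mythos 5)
Your proof is correct, and its skeleton is the same as the paper's: the Hardy--Littlewood--Sobolev inequality with exponent \(s = \frac{2N}{N+\alpha}\), the mean-value inequality \(\bigabs{\abs{a}^p-\abs{b}^p}\le C_p\bigl(\abs{a}^{p-1}+\abs{b}^{p-1}\bigr)\abs{a-b}\) (available because \(p\ge 2\), or in your weaker form for \(p \ge 1\)), the H\"older pairing \(\frac{p-1}{q}+\frac{1}{q}=\frac{1}{s}\) with \(q=\frac{2Np}{N+\alpha}\), and the Sobolev embedding \(H^1(\Rset^N)\subset L^q(\Rset^N)\); the paper merely symmetrizes the difference of products as \(\frac{1}{2}B(f+f',g-g')+\frac{1}{2}B(f-f',g+g')\) where you telescope it through the bounded bilinear form \(B\). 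The one place where you genuinely diverge is the treatment of the positive and negative parts, and there your version is the more careful one. The paper reduces \(\bigabs{\abs{u^\pm}^p-\abs{v^\pm}^p}\) to \(\bigabs{\abs{u}^p-\abs{v}^p}\) via the pointwise inequality \(\bigabs{\abs{s^\pm}^p-\abs{t^\pm}^p}\le\bigabs{\abs{s}^p-\abs{t}^p}\), which as stated is false: for \(s=1\), \(t=-1\) the left-hand side equals \(1\) while the right-hand side equals \(0\). Your route---apply the mean-value inequality directly to \(u^\pm,v^\pm\) and then use \(\abs{s^\pm-t^\pm}\le\abs{s-t}\) and \(\abs{s^\pm}\le\abs{s}\)---avoids this and is watertight; it also shows that the paper's \emph{final} estimate is unaffected, since that estimate only uses the majorization you actually prove. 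A last cosmetic remark: what you obtain is Lipschitz continuity on balls of \(H^1(\Rset^N)\), which is stronger than, and of course implies, the uniform continuity on bounded sets asserted in the claim.
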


We bring to the attention of the reader that the related 
map \(u \in H^1 (\Rset^N) \mapsto \int_{\Rset^N} \abs{\nabla u^\pm}^2 + 
\abs{u^\pm}^2\) \emph{is not uniformly continuous} on bounded sets.

\begin{proofclaim}
For every \(u, v \in H^1 (\Rset^N)\), we have 
\begin{multline*}
 \int_{\Rset^N} 
(I_\alpha \ast \abs{u}^p) \abs{u^\pm}^p
- \int_{\Rset^N} (I_\alpha \ast \abs{v}^p) \abs{v^\pm}^p\\
= \frac{1}{2} \int_{\Rset^N} \bigl(I_\alpha \ast (\abs{u}^p + \abs{v}^p)\bigr)\bigl(\abs{u^\pm}^p - \abs{v^\pm}^p\bigr)\\
+ \frac{1}{2}\int_{\Rset^N} \bigl(I_\alpha \ast (\abs{u}^p - \abs{v}^p)\bigr)\bigl(\abs{u^\pm}^p + \abs{v^\pm}^p\bigr).
\end{multline*}
By the classical Hardy--Littlewood--Sobolev inequality \eqref{eqHLS}, we obtain 
\begin{multline*}
 \Bigabs{\int_{\Rset^N} 
(I_\alpha \ast \abs{u}^p) \abs{u^\pm}^p
- \int_{\Rset^N} (I_\alpha \ast \abs{v}^p) \abs{v^\pm}^p}\\
\le C \Bigl(\int_{\Rset^N} \bigl(\abs{u}^p + \abs{v}^p\bigr)^\frac{2 N}{N + \alpha}\Bigr)^{\frac{N + \alpha}{2N}}
\Bigl(\int_{\Rset^N} \bigabs{\abs{u^\pm}^p - \abs{v^\pm}^p}^\frac{2 N}{N + \alpha}  \Bigr)^\frac{N + \alpha}{2 N}\\
+ C \Bigl(\int_{\Rset^N} \bigl(\abs{u^\pm}^p + \abs{v^\pm}^p\bigr)^\frac{2 N}{N + \alpha}\Bigr)^{\frac{N + \alpha}{2N}}
\Bigl(\int_{\Rset^N} \bigabs{\abs{u}^p - \abs{v}^p}^\frac{2 N}{N + \alpha}  \Bigr)^\frac{N + \alpha}{2 N}.
\end{multline*}
Since for every \(s, t \in \Rset\), one has \(\bigabs{\abs{s^\pm}^p - \abs{t^\pm}^p}
\le \abs{\abs{s}^p - \abs{t}^p}\) and \(\abs{s^\pm}^p + \abs{t^\pm}^p \le \abs{s}^p + \abs{t}^p\), the latter estimate can be simplified to
\begin{multline*}
 \Bigabs{\int_{\Rset^N} 
(I_\alpha \ast \abs{u}^p) \abs{u^\pm}^p
- \int_{\Rset^N} (I_\alpha \ast \abs{v}^p) \abs{v^\pm}^p}\\
\le 2C \Bigl(\int_{\Rset^N} \bigl(\abs{u}^p + \abs{v}^p\bigr)^\frac{2 N}{N + \alpha}\Bigr)^{\frac{N + \alpha}{2N}}
\Bigl(\int_{\Rset^N} \bigabs{\abs{u}^p - \abs{v}^p}^\frac{2 N}{N + \alpha}  \Bigr)^\frac{N + \alpha}{2 N}.
\end{multline*}
Next, for each \(s, t \in \Rset\), we have, since \(p \ge 2\),
\[
\begin{split}
  \bigabs{\abs{s}^p - \abs{t}^p} & \le p \abs{s - t} \int_0^1 p \abs{\tau s + (1 - \tau)t}^{p - 1} \dif \tau\\
  & \le p \abs{s - t} \int_0^1 \tau \abs{s}^{p - 1} + (1 - \tau) \abs{t}^{p - 1}\dif \tau \\
  & = \frac{p}{2} \abs{s - t} \bigl(\abs{s}^{p - 1} + \abs{t}^{p - 1} \bigr).
\end{split}
\]
Hence, we have,
\begin{multline*}
 \Bigabs{\int_{\Rset^N} 
(I_\alpha \ast \abs{u}^p) \abs{u^\pm}^p
- \int_{\Rset^N} (I_\alpha \ast \abs{v}^p) \abs{v^\pm}^p}\\
\le 2 C \Bigl(\frac{p}{2} \Bigr)^\frac{N + \alpha}{2 N}
\Bigl(\int_{\Rset^N} \bigl(\abs{u}^p + \abs{v}^p\bigr)^\frac{2 N}{N + \alpha}\Bigr)^{\frac{N + \alpha}{2N}}\\
\times 
\Bigl(\int_{\Rset^N} \bigl(\abs{u}^{p - 1} + \abs{v}^{p - 1}\bigr)^\frac{2N}{N + \alpha} \abs{u - v}^\frac{2 N}{N + \alpha}  \Bigr)^\frac{N + \alpha}{2 N}.
\end{multline*}
Therefore, by the H\"older inequality,
\begin{multline*}
 \Bigabs{\int_{\Rset^N} 
(I_\alpha \ast \abs{u}^p) \abs{u^\pm}^p
- \int_{\Rset^N} (I_\alpha \ast \abs{v}^p) \abs{v^\pm}^p}\\
\le C'
\Bigl(\int_{\Rset^N} \abs{u}^\frac{2 N p}{N + \alpha} + \abs{v}^\frac{2 N p}{N + \alpha}\Bigr)^{\frac{N + \alpha}{N}(1 - \frac{1}{2 p})}
\Bigl(\int_{\Rset^N} \abs{u - v}^\frac{2 Np}{N + \alpha} \Bigr)^\frac{N + \alpha}{2 N p}.
\end{multline*}
This shows that the map is uniformly continuous on \(L^\frac{2 N p}{N + \alpha} (\Rset^N)\).
Since by assumption, \(\frac{N - 2}{N + \alpha} \le \frac{1}{p} \le \frac{N}{N + \alpha}\), in view of the classical Sobolev embedding theorem, the embedding \(H^1 (\Rset^N) \subset L^\frac{2 N p}{N + \alpha} (\Rset^N)\) is uniformly continuous, and the claim follows.
\end{proofclaim}

\begin{claim}
\label{claimLowerBound}
We have 
\[
 \liminf_{n \to \infty} \int_{\Rset^N} \abs{\nabla u_n^\pm}^2 + \abs{u_n^\pm}^2
 = \liminf_{n \to \infty} \int_{\Rset^N} \bigl(I_\alpha \ast \abs{u_n}^p\bigr) 
\abs{u_n^\pm}^p > 0.
\]
\end{claim}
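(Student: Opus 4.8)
The plan is to prove the two assertions of the claim separately: first the equality of the two $\liminf$'s, and then their common strict positivity.

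\emph{The equality.} Since $(u_n)_{n \in \N}$ is bounded in $H^1 (\Rset^N)$ by the first claim and $\norm{u_n^\pm}_{H^1} \le \norm{u_n}_{H^1}$, the strong convergence $\mathcal{A}'(u_n) \to 0$ in $H^1 (\Rset^N)'$ gives $\dualprod{\mathcal{A}'(u_n)}{u_n^\pm} \to 0$. A direct computation shows that
\[
  \dualprod{\mathcal{A}'(u_n)}{u_n^\pm}
  = \int_{\Rset^N} \abs{\nabla u_n^\pm}^2 + \abs{u_n^\pm}^2
    - \int_{\Rset^N} \bigl(I_\alpha \ast \abs{u_n}^p\bigr)\abs{u_n^\pm}^p ,
\]
so the two integrals appearing in the statement differ by $o(1)$ and their $\liminf$'s coincide.

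\emph{Strict positivity.} I argue by contradiction, treating the positive part, the negative part being entirely symmetric. Suppose that $\liminf_{n \to \infty} \int_{\Rset^N} (I_\alpha \ast \abs{u_n}^p)\abs{u_n^+}^p = 0$; passing to a subsequence I may assume this integral tends to $0$. Using $\dist(u_n, \mathcal{N}_{\mathrm{nod}}) \to 0$, I choose $w_n \in \mathcal{N}_{\mathrm{nod}}$ with $\norm{u_n - w_n}_{H^1} \to 0$, so that $(w_n)_{n \in \N}$ is bounded in $H^1 (\Rset^N)$. For each $n$, the defining condition $\dualprod{\mathcal{A}'(w_n)}{w_n^+} = 0$ reads $\int_{\Rset^N} \abs{\nabla w_n^+}^2 + \abs{w_n^+}^2 = \int_{\Rset^N} (I_\alpha \ast \abs{w_n}^p)\abs{w_n^+}^p$. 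Bounding the right-hand side through the Hardy--Littlewood--Sobolev inequality \eqref{eqHLS} and the Sobolev embedding gives $\int_{\Rset^N} (I_\alpha \ast \abs{w_n}^p)\abs{w_n^+}^p \le C \norm{w_n}_{H^1}^p \norm{w_n^+}_{H^1}^p$; since $w_n^+ \ne 0$ and $p > 2$, combining this estimate with the identity above yields a \emph{uniform} lower bound $\int_{\Rset^N} (I_\alpha \ast \abs{w_n}^p)\abs{w_n^+}^p \ge \delta > 0$, where $\delta$ depends only on the uniform $H^1$-bound of $(w_n)_{n \in \N}$.

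It remains to transfer this lower bound from $w_n$ to $u_n$, and this is where I expect the only real subtlety to lie. One cannot compare $\norm{u_n^+}_{H^1}$ with $\norm{w_n^+}_{H^1}$ directly, because the map $u \mapsto \int_{\Rset^N} \abs{\nabla u^+}^2 + \abs{u^+}^2$ is \emph{not} uniformly continuous on bounded sets. I therefore work exclusively with the nonlocal functional $u \mapsto \int_{\Rset^N} (I_\alpha \ast \abs{u}^p)\abs{u^+}^p$, which \emph{is} uniformly continuous on bounded sets by claim~\ref{claimUniformContinuity}. Since $(u_n)_{n \in \N}$ and $(w_n)_{n \in \N}$ are bounded and $\norm{u_n - w_n}_{H^1} \to 0$, the values of this functional at $u_n$ and at $w_n$ share the same limit, so $\int_{\Rset^N} (I_\alpha \ast \abs{u_n}^p)\abs{u_n^+}^p \ge \delta/2$ for $n$ large, contradicting the assumption. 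This proves the positivity of the nonlocal $\liminf$, and the equality established above then gives it for both quantities simultaneously.
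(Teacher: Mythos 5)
Your proof is correct and follows essentially the same route as the paper's: both exploit the Nehari constraint together with the Hardy--Littlewood--Sobolev and Sobolev inequalities and the assumption \(p > 2\) to obtain a uniform lower bound on \(\int_{\Rset^N} (I_\alpha \ast \abs{w}^p)\abs{w^\pm}^p\) over \(H^1\)-bounded elements \(w\) of \(\mathcal{N}_{\mathrm{nod}}\), then transfer it to \(u_n\) via the uniform continuity of claim~\ref{claimUniformContinuity} (correctly avoiding the non-uniformly-continuous \(H^1\)-norm of the positive part), and finally obtain the equality of the two liminfs from \(\dualprod{\mathcal{A}'(u_n)}{u_n^\pm} \to 0\). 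The only differences are cosmetic: you frame the positivity as a contradiction and order the two assertions differently, whereas the paper states the lower bound as an infimum over \(\mathcal{N}_{\mathrm{nod}}\) and argues directly.
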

\begin{proofclaim}
First we observe that if \(v \in \mathcal{N}_{\mathrm{nod}}\), then by the 
Hardy--Littewood--Sobolev inequality \eqref{eqHLS}, the Sobolev inequality and the definition 
of the nodal Nehari set \(\mathcal{N}_{\mathrm{nod}}\), we have
\[
\begin{split}
 \int_{\Rset^N} \bigl(I_\alpha \ast \abs{v}^p\bigr) 
\abs{v^\pm}^p
&\le C \Bigl(\int_{\Rset^N} \abs{v}^\frac{2 N p}{N + \alpha} \Bigr)^\frac{N + 
\alpha}{2 N}\Bigl(\int_{\Rset^N} \abs{v^\pm}^\frac{2 N p}{N + \alpha} 
\Bigr)^\frac{N + 
\alpha}{2 N}\\
&\le C' \Bigl(\int_{\Rset^N} \abs{\nabla v}^2 + \abs{v}^2 
\Bigr)^\frac{p}{2}
\Bigl(\int_{\Rset^N} \abs{\nabla v^\pm}^2 + \abs{v^\pm}^2 
\Bigr)^\frac{p}{2}\\
&= C' \Bigl(\int_{\Rset^N} \abs{\nabla v}^2 + \abs{v}^2 
\Bigr)^\frac{p}{2}
\Bigl(\int_{\Rset^N} \bigl(I_\alpha \ast \abs{v}^p\bigr) 
\abs{v^\pm}^p\Bigr)^\frac{p}{2}.
\end{split}
\]
Since \(v^\pm \ne 0\), we deduce that 
\[
 \inf_{v \in \mathcal{N}_\mathrm{nod}} \Bigl(\int_{\Rset^N} \abs{\nabla v}^2 + 
\abs{v}^2 
\Bigr)^\frac{p}{2}
\Bigl(\int_{\Rset^N} \bigl(I_\alpha \ast \abs{v}^p\bigr) 
\abs{v^\pm}^p\Bigr)^\frac{p-2}{2} > 0.
\]

Since the sequence \((u_n)_{n \in \N}\) is bounded in the space \(H^1 
(\Rset^N)\) and since \(\lim_{n \to \infty} \dist (u_n, 
\mathcal{N}_\mathrm{odd}) = 0\), we deduce from the lower bound above and from 
the uniform continuity property of claim~\ref{claimUniformContinuity} that 
\[
 \liminf_{n \to \infty} \int_{\Rset^N} \bigl(I_\alpha \ast \abs{u_n}^p\bigr) 
\abs{u_n^\pm}^p > 0.
\]
Since \(\lim_{n \to \infty} \dualprod{\mathcal{A}' (u_n)}{u_n^\pm} = 0\), the 
conclusion follows.
\end{proofclaim}

\begin{claim}
There exists \(R > 0\) such that 
\[
 \limsup_{n \to \infty} \sup_{a \in \Rset^N} \int_{B_R (a)} \abs{u_n^+}^\frac{2 
N p}{N + \alpha} \int_{B_R (a)} \abs{u_n^-}^\frac{2 N 
p}{N + \alpha} > 0.
\]
\end{claim}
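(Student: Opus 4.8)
The plan is to argue by contradiction. If the conclusion fails, then, the quantity being nonnegative, for every \(R > 0\) we have
\[
  \lim_{n\to\infty}\sup_{a\in\Rset^N}\int_{B_R(a)}\abs{u_n^+}^{\frac{2Np}{N+\alpha}}\int_{B_R(a)}\abs{u_n^-}^{\frac{2Np}{N+\alpha}} = 0.
\]
I would show that this forces the nonlocal cross-interaction to vanish,
\[
  \lim_{n\to\infty}\int_{\Rset^N}\bigl(I_\alpha\ast\abs{u_n^+}^p\bigr)\abs{u_n^-}^p = 0,
\]
and then decouple the action exactly as in the proof of proposition~\ref{propositionOddPalaisSmaleCondition} to reach the forbidden inequality \(c_{\mathrm{nod}}\ge 2c_0\).

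To establish the vanishing of the cross term, I would fix \(\rho>0\) and split \(\int_{\Rset^N}\int_{\Rset^N} I_\alpha(x-y)\abs{u_n^+(y)}^p\abs{u_n^-(x)}^p\dif y\dif x\) according to whether \(\abs{x-y}\ge\rho\) or \(\abs{x-y}<\rho\). On the far region, choosing \(\beta\in(\alpha,N)\) and using \(I_\alpha(x-y)\le C\rho^{-(\beta-\alpha)}I_\beta(x-y)\) exactly as in the proof of proposition~\ref{propositionOddPalaisSmaleCondition}, together with the Hardy--Littlewood--Sobolev inequality \eqref{eqHLS}, the classical Sobolev inequality and the boundedness of \((u_n)_{n\in\N}\) in \(H^1(\Rset^N)\), this part is bounded by \(C'\rho^{-(\beta-\alpha)}\), uniformly in \(n\). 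The near region is the delicate part and is where the genuinely new estimate is needed: I would cover \(\Rset^N\) by balls \(B_\rho(a_k)\) of bounded overlap, note that \(\abs{x-y}<\rho\) forces \(x,y\) into a common ball \(B_{2\rho}(a_k)\), and apply \eqref{eqHLS} on each such ball to bound this part by \(C\sum_k(A_kB_k)^{\frac{N+\alpha}{2N}}\), where \(A_k=\int_{B_{2\rho}(a_k)}\abs{u_n^+}^{\frac{2Np}{N+\alpha}}\) and \(B_k=\int_{B_{2\rho}(a_k)}\abs{u_n^-}^{\frac{2Np}{N+\alpha}}\).

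The key algebraic point is that \(\frac{N+\alpha}{2N}=\frac12+\frac{\alpha}{2N}\) with \(\frac{\alpha}{2N}>0\), so that
\[
  (A_kB_k)^{\frac{N+\alpha}{2N}}
  \le \Bigl(\sup_j A_jB_j\Bigr)^{\frac{\alpha}{2N}}(A_kB_k)^{\frac12}
  \le \Bigl(\sup_j A_jB_j\Bigr)^{\frac{\alpha}{2N}}\,\frac{A_k+B_k}{2}.
\]
Summing over \(k\) and using the bounded overlap of the \(B_{2\rho}(a_k)\) together with the Sobolev embedding controls \(\sum_k(A_k+B_k)\) by \(C\int_{\Rset^N}\abs{u_n}^{\frac{2Np}{N+\alpha}}\le C''\), so the near region is at most \(C'''\bigl(\sup_{a}\int_{B_{2\rho}(a)}\abs{u_n^+}^{\frac{2Np}{N+\alpha}}\int_{B_{2\rho}(a)}\abs{u_n^-}^{\frac{2Np}{N+\alpha}}\bigr)^{\frac{\alpha}{2N}}\), which tends to \(0\) as \(n\to\infty\) by the contradiction hypothesis applied with radius \(2\rho\). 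Letting \(\rho\to\infty\) then kills the far term and yields the announced vanishing of the cross interaction.

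Finally I would exploit that vanishing. For each \(n\) let \(s_n,t_n\in(0,\infty)\) be such that \(s_nu_n^+,t_nu_n^-\in\mathcal{N}_0\); these are well defined and bounded away from \(0\) and \(\infty\) by claim~\ref{claimLowerBound}. Since \(\mathcal{A}'(u_n)\to0\) gives \(\dualprod{\mathcal{A}'(u_n)}{u_n^\pm}\to0\), and since the vanishing of the cross term yields \(\int_{\Rset^N}(I_\alpha\ast\abs{u_n}^p)\abs{u_n^\pm}^p=\int_{\Rset^N}(I_\alpha\ast\abs{u_n^\pm}^p)\abs{u_n^\pm}^p+o(1)\), one gets \(s_n\to1\) and \(t_n\to1\); hence
\[
  \mathcal{A}(u_n)=\Bigl(\tfrac12-\tfrac1{2p}\Bigr)\int_{\Rset^N}\abs{\nabla u_n^+}^2+\abs{u_n^+}^2+\Bigl(\tfrac12-\tfrac1{2p}\Bigr)\int_{\Rset^N}\abs{\nabla u_n^-}^2+\abs{u_n^-}^2+o(1),
\]
and the two first terms equal \(\mathcal{A}(s_nu_n^+)/s_n^2\ge c_0/s_n^2\), respectively \(\mathcal{A}(t_nu_n^-)/t_n^2\ge c_0/t_n^2\), so passing to the limit gives \(c_{\mathrm{nod}}\ge 2c_0\), contradicting \(c_{\mathrm{nod}}<2c_0\). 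The main obstacle is precisely the near-region estimate: because the exponent \(\frac{N+\alpha}{2N}\) exceeds \(\frac12\), the localized Hardy--Littlewood--Sobolev bounds do not immediately assemble, and it is the splitting into a small supremum factor and an \(\ell^1\)-summable factor that turns them into a global quantity governed by the product in the statement.
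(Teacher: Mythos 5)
Your proposal is correct, and at the top level it follows the paper's own scheme: assume by contradiction that the product vanishes for every radius, deduce that the nonlocal cross term \(\int_{\Rset^N}(I_\alpha\ast\abs{u_n^+}^p)\abs{u_n^-}^p\) tends to \(0\), project \(u_n^\pm\) onto \(\mathcal{N}_0\) with factors tending to \(1\), and conclude \(c_{\mathrm{nod}}\ge 2c_0\), contradicting the hypothesis. Where you genuinely diverge is in the proof of the cross-term vanishing, which the paper isolates as the quantitative lemma~\ref{lemmaNewEstimate}. After the same far/near splitting (your far-region treatment is identical; note the paper's extra constraint \(\beta<(p-1)N\) is automatic here since \(p>2\)), the paper handles the near-diagonal part by a \emph{continuous} average over ball centres, \(\chi_{B_{R/2}}(x-y)\le CR^{-N}\int_{\Rset^N}\chi_{B_R(a)}(x)\chi_{B_R(a)}(y)\dif a\), followed by a ball-wise Hardy--Littlewood--Sobolev inequality, a ball-wise Sobolev inequality to peel off local \(H^1\) factors, and a Cauchy--Schwarz inequality in the centre variable; smallness enters through the exponent \(\frac{N+\alpha}{2N}(1-\frac1p)\) and the remaining factors are \(H^1\) norms. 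You instead use a \emph{discrete} covering of bounded overlap and the purely algebraic splitting \((A_kB_k)^{\frac{N+\alpha}{2N}}\le\bigl(\sup_jA_jB_j\bigr)^{\frac{\alpha}{2N}}\tfrac12(A_k+B_k)\), then sum using bounded overlap and the global Sobolev embedding, so smallness enters through the exponent \(\frac{\alpha}{2N}\) and the remaining factor is an \(L^{2Np/(N+\alpha)}\) bound. Both assemblies are valid on the same range of exponents; yours is arguably more elementary (no averaging over centres, no local Sobolev inequalities, no Cauchy--Schwarz in \(a\)), while the paper's produces a clean standalone estimate stated for general \(u,v\in H^1(\Rset^N)\). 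Your endgame is also marginally streamlined: rather than computing \(\mathcal{A}(t_{n,+}u_n^++t_{n,-}u_n^-)\) and identifying its limit with \(c_{\mathrm{nod}}\) (which tacitly uses \(t_{n,\pm}\to1\) and continuity of \(\mathcal{A}\) on bounded sets), you bound \(\mathcal{A}(u_n)\) directly via the identity \(\bigl(\tfrac12-\tfrac1{2p}\bigr)\int_{\Rset^N}\abs{\nabla u_n^\pm}^2+\abs{u_n^\pm}^2=\mathcal{A}(s_nu_n^\pm)/s_n^2\ge c_0/s_n^2\); the two finishes are equivalent in substance.
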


\begin{proofclaim}
We assume by contradiction that for every \(R > 0\), 
\[
 \lim_{n \to \infty} \sup_{a \in \Rset^N} \int_{B_R (a)} \abs{u_n^+}^\frac{2 
N p}{N + \alpha} \int_{B_R (a)} \abs{u_n^-}^\frac{2 N 
p}{N + \alpha} = 0.
\]
Then by lemma~\ref{lemmaNewEstimate} below, since the sequences \((u_n^+)_{n 
\in 
\N}\) and \((u_n^-)_{n \in \N}\) are bounded in the space \(H^1 (\Rset^N)\), we have 
\[
 \lim_{n \to \infty} \int_{\Rset^N} \bigl(I_\alpha \ast \abs{u_n^+}^p\bigr) 
\abs{u_n^-}^p = 0.
\]

We now take \(t_{n, \pm} \in (0, \infty)\) such that \(t_{n, \pm} u_n^\pm \in 
\mathcal{N}_0\).
Since 
\[
\begin{split}
\displaystyle \int_{\Rset^N} 
\abs{\nabla u_n^\pm}^2 + \abs{u_n^\pm}^2
&= \int_{\Rset^N} \bigl(I_\alpha 
\ast \abs{u_n}^p\bigr) \abs{u_n^\pm}^p + \dualprod{\mathcal{A}' (u_n)}{u_n^\pm}\\
&= \int_{\Rset^N} \bigl(I_\alpha 
\ast \abs{u_n}^p\bigr) \abs{u_n^\pm}^p + o (1)
\end{split}
\]
and 
\[
\begin{split}
\int_{\Rset^N} \bigl(I_\alpha 
\ast \abs{u_n^\pm}^p\bigr) \abs{u_n^\pm}^p
 &= \int_{\Rset^N} \bigl(I_\alpha 
\ast \abs{u_n}^p\bigr) \abs{u_n^\pm}^p - \int_{\Rset^N} \bigl(I_\alpha 
\ast \abs{u_n^+}^p\bigr) \abs{u_n^-}^p\\
&= \int_{\Rset^N} \bigl(I_\alpha 
\ast \abs{u_n}^p\bigr) \abs{u_n^\pm}^p + o (1),
\end{split}
\]
it follows, in view of claim~\ref{claimLowerBound}, that \(\lim_{n \to \infty} 
t_{n, \pm} = 1\).
We compute
\begin{multline*}
 \mathcal{A} (t_{n, +} u_n^+ + t_{n, -} u_n^-)\\
 = \mathcal{A} (t_{n, +} u_n^+) + \mathcal{A} (t_{n, -} u_n^-)
 -\frac{t_{n,+}^p t_{n, -}^p}{p} \int_{\Rset^N} \bigl(I_\alpha \ast 
\abs{u_n^+}^p\bigr) \abs{u_n^-}^p.
\end{multline*}
and we deduce that 
\[
\begin{split}
 c_\mathrm{nod} &= \lim_{n \to \infty} \mathcal{A} (t_{n, +} u_n^+ + t_{n, -} 
u_n^-)\\
&\ge \liminf_{n \to \infty} \mathcal{A} (t_{n, +} u_n^+) + \liminf_{n 
\to \infty} \mathcal{A} (t_{n, -} u_n^-) \ge 2 c_0,
\end{split}
\]
in contradiction with the assumption of the proposition.
\end{proofclaim}

We now conclude the proof of the proposition. Up to a translation and a subsequence, we can assume 
that 
\[
 \liminf_{n \to \infty} \int_{B_R (a)} \abs{u_n^\pm}^\frac{2 
N p}{N + \alpha} \ge 0
\]
and that the sequence \((u_n)_{n \in \N}\) converges weakly to some \(u \in H^1 
(\Rset^N)\).
As in the proof of proposition~\ref{propositionOddPalaisSmaleCondition}, by the 
weak convergence and by the classical Rellich--Kondrashov compactness 
theorem, \(\mathcal{A}' (u) = 0\) and \(u^\pm \ne 0\), whence \(u \in 
\mathcal{N}_\mathrm{nod}\).
We also have by lower semicontinuity of the Sobolev norm under weak convergence
\[
\begin{split}
\lim_{n \to \infty} \mathcal{A} (u_n) &= \lim_{n \to \infty} \mathcal{A} (u_n) 
- \frac{1}{2 p} \dualprod{\mathcal{A}' (u_n)}{u_n}\\
& =\lim_{n \to \infty} \Bigl(\frac{1}{2} - \frac{1}{2 p} \Bigr)\int_{\Rset^N} 
\abs{\nabla u_n}^2 + \abs{u_n}^2\\
&\ge \Bigl(\frac{1}{2} - \frac{1}{2 p} \Bigr)\int_{\Rset^N} \abs{\nabla u}^2 + 
\abs{u}^2\\
&=\mathcal{A} (u) - \frac{1}{2 p} \dualprod{\mathcal{A}' (u)}{u} = \mathcal{A}(u),
\end{split}
\]
from which we deduce that \(\mathcal{A} (u) = c_\mathrm{nod}\) and the strong 
convergence of the sequence \((u_n)_{n \in \N}\) in the space \(H^1 (\Rset^N)\).
\end{proof}

\begin{lemma}
\label{lemmaNewEstimate}
If \(\frac{N - 2}{N + \alpha} \le \frac{1}{p} < \frac{N}{N + \alpha}\), then 
for every \(\beta \in \bigl(\alpha, \min (1, p - 1) N\bigr)\), there exists \(C 
> 0\) such 
that for every \(u, v \in H^1 (\Rset^N)\), 
\begin{multline*}
 \int_{\Rset^N} \bigl(I_\alpha \abs{u}^p\bigr) \abs{v}^p
 \le C \Bigl(\int_{\Rset^N} \abs{\nabla 
u}^2 + \abs{u}^2 \int_{\Rset^N}  \abs{\nabla 
v}^2 + \abs{v}^2 \Bigr)^\frac{1}{2}\\
\shoveright{\Bigl(\sup_{a \in \Rset^N} \int_{B_R 
(a)} \abs{u}^\frac{2 N p}{N + \alpha}\int_{B_R (a)} 
\abs{v}^\frac{2 N p}{N + \alpha}\Bigr)^{\frac{N + \alpha}{2 
N}(1 - \frac{1}{p})}}\\
+ \frac{C}{R^{\beta - \alpha}} \Bigl(\int_{\Rset^N} \abs{\nabla 
u}^2 + \abs{u}^2 \int_{\Rset^N}  \abs{\nabla 
v}^2 + \abs{v}^2 \Bigr)^\frac{p}{2}
\end{multline*}
 
\end{lemma}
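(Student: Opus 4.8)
The plan is to split the Riesz kernel $I_\alpha$ into a near‑diagonal part supported in $B_R$ and a far part supported in $\Rset^N \setminus B_R$: the far part will produce the term carrying the factor $R^{-(\beta - \alpha)}$, while the near part will produce the term carrying the local concentration supremum. Throughout I write $q = \frac{2Np}{N + \alpha}$, $\theta = \frac{N + \alpha}{2N}$ (so that $\theta q = p$ and $\theta/p = 1/q$), and $\norm{w}$ for $\bigl(\int_{\Rset^N} \abs{\nabla w}^2 + \abs{w}^2\bigr)^{1/2}$; the hypothesis $\frac{N - 2}{N + \alpha} \le \frac1p < \frac{N}{N + \alpha}$ gives $q \in (2, 2^*]$ with $2^* = \frac{2N}{N - 2}$, hence the Sobolev embedding $H^1 (\Rset^N) \subset L^q (\Rset^N)$. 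We may assume $R \ge 1$, since for $R \le 1$ the inequality \eqref{eqHLS} with exponent $\alpha$ and the Sobolev embedding bound the whole left‑hand side by $C \norm{u}^p \norm{v}^p \le C R^{-(\beta - \alpha)} \norm{u}^p \norm{v}^p$.

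For the far part I would first record the pointwise bound: if $\abs{z} \ge R$ and $\alpha < \beta < N$, then $\frac{1}{\abs{z}^{N - \alpha}} = \frac{1}{\abs{z}^{N - \beta}} \frac{1}{\abs{z}^{\beta - \alpha}} \le \frac{1}{R^{\beta - \alpha}} \frac{1}{\abs{z}^{N - \beta}}$, so that $\chi_{\Rset^N \setminus B_R} I_\alpha \le \frac{C}{R^{\beta - \alpha}} I_\beta$ and therefore $\int_{\Rset^N} \bigl((\chi_{\Rset^N \setminus B_R} I_\alpha) \ast \abs{u}^p\bigr) \abs{v}^p \le \frac{C}{R^{\beta - \alpha}} \int_{\Rset^N} \bigl(I_\beta \ast \abs{u}^p\bigr) \abs{v}^p$. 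Applying \eqref{eqHLS} with the exponent $\beta$ and the Sobolev embedding bounds this by $\frac{C}{R^{\beta - \alpha}} \norm{u}^p \norm{v}^p$, which is the second term. Here the constraint $\beta < \min(1, p - 1) N$ is used: $\beta < (p - 1) N$ guarantees $\frac{2Np}{N + \beta} \ge 2$ so that $H^1 \subset L^{\frac{2Np}{N + \beta}}$, while $\beta < N$ keeps $I_\beta$ a legitimate kernel; the upper endpoint $\frac{2Np}{N + \beta} \le 2^*$ is automatic because $\beta > \alpha$.

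For the near part $\iint_{\abs{x - y} \le R} I_\alpha (x - y) \abs{u(y)}^p \abs{v(x)}^p \dif y \dif x$ I would tile $\Rset^N$ by cubes $(Q_i)_i$ of side $R$. If $x \in Q_i$ and $\abs{x - y} \le R$, then $y$ lies in one of the at most $3^N$ cubes $Q_j$ of the surrounding block; write $j \sim i$. On each coupled pair, \eqref{eqHLS} gives
\[
 \int_{Q_i} \int_{Q_j} I_\alpha (x - y) \abs{v(x)}^p \abs{u(y)}^p \dif y \dif x \le C\, V_i^\theta U_j^\theta,
\]
where $U_i = \int_{Q_i} \abs{u}^q$ and $V_i = \int_{Q_i} \abs{v}^q$. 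The decisive geometric point is that two coupled cubes lie in a common ball of radius $C_0 R$, so that $V_i U_j \le M := \sup_{a} \int_{B_{C_0 R}(a)} \abs{v}^q \int_{B_{C_0 R}(a)} \abs{u}^q$ for every $j \sim i$; this is exactly what yields the \emph{product over a single ball} rather than a product of separate suprema. Splitting $V_i^\theta U_j^\theta = V_i^{1/q} U_j^{1/q} (V_i U_j)^{\theta(1 - 1/p)}$ and estimating the last factor by $M^{\theta(1 - 1/p)}$, Cauchy--Schwarz together with the bounded overlap reduces the sum to $C M^{\theta(1 - 1/p)} \bigl(\sum_i \norm{v}_{L^q(Q_i)}^2\bigr)^{1/2} \bigl(\sum_i \norm{u}_{L^q(Q_i)}^2\bigr)^{1/2}$.

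The crux, and the main technical obstacle, is then the summation estimate $\sum_i \norm{w}_{L^q(Q_i)}^2 \le C \norm{w}^2$ with $C$ independent of the side length $R$: this is where the full $H^1$ norm, rather than merely $L^q$, is indispensable. I would prove it by interpolating $\norm{w}_{L^q(Q_i)} \le \norm{w}_{L^2(Q_i)}^{1 - \lambda} \norm{w}_{L^{2^*}(Q_i)}^{\lambda}$, using the scale‑invariant Sobolev inequality $\norm{w}_{L^{2^*}(Q_i)} \le C \norm{w}_{H^1(Q_i)}$ on each cube of side $R \ge 1$ (a short scaling computation shows the constant is uniform in $R$, the lower‑order term appearing with a favourable factor $R^{-2}$), and summing with Hölder to obtain $\sum_i \norm{w}_{L^q(Q_i)}^2 \le \norm{w}_{L^2}^{2(1 - \lambda)} \bigl(C \norm{w}^2\bigr)^{\lambda} \le C \norm{w}^2$ (with the obvious substitute for $2^*$ when $N \le 2$). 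This bounds the near part by $C M^{\theta(1 - 1/p)} \norm{u} \norm{v}$, which is the first term except that the supremum is over balls $B_{C_0 R}$; since the asserted inequality is invariant under replacing $R$ by $R/C_0$ (at the harmless cost of a constant in front of $R^{-(\beta - \alpha)}$), this delivers precisely the stated estimate.
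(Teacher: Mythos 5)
Your argument is correct, and its skeleton --- splitting the kernel into a far part, estimated via \(\chi_{\Rset^N \setminus B_R} I_\alpha \le C R^{-(\beta - \alpha)} I_\beta\) followed by Hardy--Littlewood--Sobolev and Sobolev (with exactly the same role for the two constraints \(\beta < (p-1)N\) and \(\beta > \alpha\)), plus a near part carrying the concentration factor --- is the same as the paper's. Where you genuinely diverge is the implementation of the near-diagonal estimate. The paper does not tile: it uses the continuous averaging identity \(\chi_{B_{R/2}}(x - y) \le C R^{-N} \int_{\Rset^N} \chi_{B_R (a)}(x)\, \chi_{B_R (a)}(y) \dif a\), applies HLS and the Sobolev inequality on each ball \(B_R(a)\) (exactly as you do on coupled cubes, including the same exponent split \(\theta = \tfrac{1}{q} + \theta(1 - \tfrac1p)\)), pulls out the supremum, and concludes with the Cauchy--Schwarz inequality in the parameter \(a\) together with Fubini, since \(\int_{\Rset^N} \int_{B_R(a)} f \dif a = c R^N \int_{\Rset^N} f\) cancels the prefactor \(R^{-N}\) identically. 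Your discrete tiling with bounded overlap is what forces you to prove the extra summation estimate \(\sum_i \norm{w}_{L^q(Q_i)}^2 \le C \norm{w}^2\); the paper's Fubini trick sidesteps any such lemma. Both arguments rest on the same two insights --- coupled regions lie in a common ball, so one gets the supremum of the \emph{product over a single ball} rather than a product of suprema, and the local Sobolev inequality converts the leftover \(L^q\) factors into local \(H^1\) quantities recombined by Cauchy--Schwarz --- so the proofs are cousins; yours is longer but makes the geometry explicit, and your reductions to \(R \ge 1\) and the rescaling \(R \mapsto R/C_0\) handle points the paper leaves implicit.

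One caveat: your parenthetical ``the obvious substitute for \(2^*\) when \(N \le 2\)'' is the one thin spot. For a finite exponent \(s\) the per-cube constant produced by your scaling computation behaves like \(R^{N/s + 1 - N/2}\), which \emph{grows} in \(R\) when \(N \le 2\), so the interpolation route does not extend verbatim. The fix is elementary: since \(q \ge 2\), tile each side-\(R\) cube by unit-scale cubes and use the embedding \(\ell^2 \hookrightarrow \ell^q\) to get \(\norm{w}_{L^q(Q_i)} \le C \norm{w}_{H^1(Q_i)}\) with a constant uniform in \(R \ge 1\) (this in fact proves your summation estimate directly, with no interpolation, in every dimension). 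The paper's own invocation of ``the classical Sobolev inequality on the ball \(B_R(a)\)'' with an \(R\)-uniform constant glosses over the same issue, so this is a remark to tighten, not a gap in your approach.
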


When \(p = \frac{N + \alpha}{N}\), then \((p - 1)N = \alpha\) and there is no \(\beta\) that would satisfy the assumptions.

\begin{proof}[Proof of lemma~\ref{lemmaNewEstimate}]
We first decompose the integral as 
\[
 \int_{\Rset^N} \bigl(I_\alpha \abs{u}^p\bigr) \abs{v}^p
 =\int_{\Rset^N} \bigl((\chi_{B_{R/2}} I_\alpha) \ast \abs{u}^p\bigr) \abs{v}^p
 +\int_{\Rset^N} \bigl((\chi_{\Rset^N \setminus B_{R/2}} I_\alpha) \ast
\abs{u}^p\bigr) \abs{v}^p.
\]
We then observe that if \(\beta \in (\alpha, N)\), then 
\[
 \int_{\Rset^N} \bigl((\chi_{\Rset^N \setminus B_{R/2}} \ast I_\alpha) 
\abs{u}^p\bigr) 
\abs{v}^p \le \frac{C}{R^{\beta - \alpha}} \int_{\Rset^N} (I_\beta \ast 
\abs{u}^p) \abs{v}^p.
\]
If \(\beta < (p - 1) N\), then by the Hardy--Littewood--Sobolev inequality and 
by the Sobolev inequality, we have 
\[
 \int_{\Rset^N} \bigl((\chi_{\Rset^N \setminus B_{R/2}} I_\alpha) \ast 
\abs{u}^p\bigr) 
\abs{v}^p \le \frac{C'}{R^{\beta - \alpha}} \Bigl(\int_{\Rset^N} \abs{\nabla 
u}^2 + \abs{u}^2\Bigr)^\frac{p}{2}\Bigl(\int_{\Rset^N} \abs{\nabla v}^2 
+ \abs{v}^2\Bigr)^\frac{p}{2}.
\]
Next, we have 
\begin{multline*}
 \int_{\Rset^N} \bigl((\chi_{B_{R/2}} I_\alpha) \ast \abs{u}^p\bigr) \abs{v}^p\\
 \le \frac{C'}{R^N} \int_{\Rset^N} \int_{B_R (a)} \int_{B_R (a)} I_\alpha (x 
- y) \abs{u (x)}^p \abs{v (y)}^p \dif x \dif y \dif a.
\end{multline*}
For every \(a \in \Rset^N\), we have, by the Hardy--Littewood--Sobolev 
inequality \eqref{eqHLS} and the classical Sobolev inequality on the ball \(B_R 
(a)\),
\begin{multline*}
\int_{B_R (a)} \int_{B_R (a)} I_\alpha (x 
- y) \abs{u (x)}^p \abs{v (y)}^p \dif x \dif y\\
\shoveleft{\le C'' \Bigl(\int_{B_R (a)} \abs{u}^\frac{2 N p}{N + \alpha} 
\int_{B_R (a)} \abs{v}^\frac{2 N p}{N + \alpha}\Bigr)^\frac{N + \alpha}{2 N}}\\
\le C''' \Bigl(\int_{B_R (a)} \abs{u}^\frac{2 N p}{N + \alpha}\int_{B_R (a)} 
\abs{v}^\frac{2 N p}{N + \alpha}\Bigr)^{\frac{N + \alpha}{2 N}(1-\frac{1}{p})}\\
\times \Bigl(\int_{B_R (a)} \abs{\nabla u}^2 + \abs{u}^2 \int_{B_R (a)} \abs{\nabla 
v}^2 + \abs{v}^2 \Bigr)^\frac{1}{2}.
\end{multline*}
We now integrate this estimate with respect to \(a \in \Rset^N\) and apply the 
Cauchy--Schwarz inequality to 
obtain
\begin{multline*}
 \int_{\Rset^N} \bigl((\chi_{B_{R/2}} I_\alpha) 
\abs{u}^p\bigr) 
\abs{v}^p\\
\le \frac{C'''}{R^N} \Bigl(\int_{\Rset^N} \Bigl(\int_{B_R (a)}  \abs{\nabla 
u}^2 + \abs{u}^2\Bigr) \dif a\Bigr)^\frac{1}{2} \Bigl(\int_{\Rset^N} \Bigl(\int_{B_R (a)}  \abs{\nabla 
v}^2 + \abs{v}^2\Bigr) \dif a\Bigr)^\frac{1}{2}\\
\times \shoveright{\Bigl(\sup_{a \in \Rset^N} \int_{B_R 
(a)} \abs{u}^\frac{2 N p}{N + \alpha}\int_{B_R (a)} 
\abs{v}^\frac{2 N p}{N + \alpha}\Bigr)^{\frac{N + \alpha}{2 
N}(1-\frac{1}{p})}}\\
= C''''\Bigl(\int_{\Rset^N}  \abs{\nabla 
u}^2 + \abs{u}^2 \int_{\Rset^N} \abs{\nabla 
v}^2 + \abs{v}^2 \Bigr)^\frac{1}{2}\\
\times \Bigl(\sup_{a \in \Rset^N} \int_{B_R 
(a)} \abs{u}^\frac{2 N p}{N + \alpha}\int_{B_R (a)} 
\abs{v}^\frac{2 N p}{N + \alpha}\Bigr)^{\frac{N + \alpha}{2 N}(1-\frac{1}{p})}.
\end{multline*}
\end{proof}

\subsection{Existence of a minimal action nodal solution}
In order to prove theorem~\ref{theoremNod}, we finally establish the strict 
inequality.

\begin{proposition}
\label{propositionLevelsOddNod}
If \(\frac{N - 2}{N + \alpha} \le \frac{1}{p} \le \frac{N}{N + \alpha}\), then 
\[
 \mathcal{N}_\mathrm{odd} \subset \mathcal{N}_{\mathrm{nod}}.
\]
In particular,
\[
 c_\mathrm{nod} \le c_\mathrm{odd}.
\]
\end{proposition}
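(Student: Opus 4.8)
The plan is to establish the set inclusion $\mathcal{N}_{\mathrm{odd}} \subset \mathcal{N}_{\mathrm{nod}}$ directly and then to read off the inequality on the levels from the monotonicity of the infimum. So fix $u \in \mathcal{N}_{\mathrm{odd}}$, that is, a nonzero odd function $u \in H^1_{\mathrm{odd}}(\Rset^N)$ with $\dualprod{\mathcal{A}'(u)}{u} = 0$, and write $\sigma(x', x_N) = (x', -x_N)$ for the reflection across $\Rset^{N-1}\times\{0\}$. The first point to record is that $u^+ \ne 0 \ne u^-$: since $u$ is odd one has $u^+ = -u^-\circ\sigma$, so if either part vanished then $u$ would vanish identically, contrary to $u \ne 0$.

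The heart of the argument is to show that the two pairings $\dualprod{\mathcal{A}'(u)}{u^+}$ and $\dualprod{\mathcal{A}'(u)}{u^-}$ coincide. First I would rewrite each of them, using that $\scalprod{\nabla u}{\nabla u^\pm} = \abs{\nabla u^\pm}^2$ and $u\,u^\pm = \abs{u^\pm}^2$ almost everywhere, together with the pointwise identity $\abs{u}^{p-2}u\,u^\pm = \abs{u^\pm}^p$, in the compact form
\[
\dualprod{\mathcal{A}'(u)}{u^\pm}
= \int_{\Rset^N} \abs{\nabla u^\pm}^2 + \abs{u^\pm}^2 - \int_{\Rset^N}\bigl(I_\alpha \ast \abs{u}^p\bigr)\abs{u^\pm}^p.
\]
Then I would exploit the symmetry. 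Since $u$ is odd, $\abs{u}^p$ is even, and because $I_\alpha$ is radial the potential $I_\alpha \ast \abs{u}^p$ is likewise invariant under $\sigma$, even though $u$ itself is odd. Combined with the identity $u^- = -u^+\circ\sigma$, which gives $\abs{u^-} = \abs{u^+}\circ\sigma$, the change of variables $x \mapsto \sigma(x)$ (whose differential is orthogonal, hence preserves the Dirichlet integral) carries each of the three integrals for the sign $-$ onto the corresponding integral for the sign $+$. This yields $\dualprod{\mathcal{A}'(u)}{u^+} = \dualprod{\mathcal{A}'(u)}{u^-}$.

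To close the inclusion I would add the two pairings: as $u = u^+ + u^-$ and $\dualprod{\mathcal{A}'(u)}{u} = 0$, the two equal quantities sum to zero and therefore each vanishes. Together with $u^+ \ne 0 \ne u^-$ this is precisely the defining condition of $\mathcal{N}_{\mathrm{nod}}$, so $u \in \mathcal{N}_{\mathrm{nod}}$ and hence $\mathcal{N}_{\mathrm{odd}} \subset \mathcal{N}_{\mathrm{nod}}$. The level inequality is then immediate, since the infimum of $\mathcal{A}$ over the larger set $\mathcal{N}_{\mathrm{nod}}$ cannot exceed its infimum over the subset $\mathcal{N}_{\mathrm{odd}}$, giving $c_{\mathrm{nod}} \le c_{\mathrm{odd}}$.

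I do not anticipate a genuine obstacle here; the only step demanding care is the bookkeeping of signs in the reflection, in particular the observation that $I_\alpha \ast \abs{u}^p$ is $\sigma$-invariant although $u$ is odd. This is exactly what makes the nonlocal term symmetric under the exchange of $u^+$ and $u^-$ and thus drives the whole argument.
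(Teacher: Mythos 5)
Your proposal is correct and follows essentially the same route as the paper's proof: both arguments rest on the two facts that \(\dualprod{\mathcal{A}'(u)}{u^+} + \dualprod{\mathcal{A}'(u)}{u^-} = \dualprod{\mathcal{A}'(u)}{u} = 0\) and that \(\dualprod{\mathcal{A}'(u)}{u^+} = \dualprod{\mathcal{A}'(u)}{u^-}\) by the odd symmetry of \(u\), whence both pairings vanish. You merely spell out what the paper leaves implicit (the explicit form of the pairings, the \(\sigma\)-invariance of \(I_\alpha \ast \abs{u}^p\), and the nonvanishing of \(u^\pm\)), all of which is accurate.
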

\begin{proof}
If \(u \in \mathcal{N}_\mathrm{odd}\),
then since \(u \in \mathcal{N}_0\),
\[
 \dualprod{\mathcal{A}' (u)}{u^+} + \dualprod{\mathcal{A}' (u)}{u^-} = 
\dualprod{\mathcal{A}' (u)}{u} = 0.
\]
On the other hand, since \(u \in H^1_\mathrm{odd} (\Rset^N)\), by the 
invariance of \(u\) under odd reflections,
\[
 \dualprod{\mathcal{A}' (u)}{u^+} = \dualprod{\mathcal{A}' (u)}{u^-},
\]
and the conclusion follows.
\end{proof}

We can now prove theorem~\ref{theoremNod} about the existence of minimal action 
nodal solutions.

\begin{proof}[Proof of theorem~\ref{theoremNod}]
Proposition~\ref{propositionNodalPalaisSmaleExistence} gives the existence of a 
localized Palais--Smale sequence \((u_n)_{n \in \N}\).
By propositions~\ref{propositionStrictInequality} and \ref{propositionLevelsOddNod},
the strict inequality \(c_\mathrm{nod} < 2 c_0\) holds. Hence we can apply 
proposition~\ref{propositionNodalPalaisSmaleCondition} to reach the conclusion.
The solution \(u\) is twice continuously differentiable by the Choquard equation's regularity theory \cite{MorVanSchaft13}*{proposition 4.1} (see also \cite{CingolaniClappSecchi2012}*{lemma A.10}).
\end{proof}

\subsection{Degeneracy in the locally sublinear case}

We conclude this paper by proving that \(c_{\mathrm{nod}} = c_0\) if \(p < 2\).

\begin{proof}[Proof of theorem~\ref{theoremDegeneracy}]
We observe that if \(u \in \mathcal{N}_0\), then \(\abs{u} \in \mathcal{N}_0\).
Together with a density argument, this shows that 
\[
 c_0 = \inf \bigl\{\mathcal{A} (u)\st u \in \mathcal{N}_0 \cap C^1_c (\Rset^N) 
\text{ and }  u \ge 0 \text{ on } \Rset^N\bigr\}.
\]
Let now \(u \in \mathcal{N}_0 \cap C^1_c (\Rset^N)\) such that \(u \ge 0\) on \(\Rset^N\). We choose a point \(a \not \in \supp u\) and a function \(\varphi \in C^1_c (\Rset^N) \setminus \{0\}\) such that \(\varphi \ge 0\) and we define for each \(\delta > 0\) the function \(u_\delta : \Rset^N \to \Rset\) for every \(x \in \Rset^N\) by
\[
  u_\delta(x) = u (x) - \delta^\frac{2}{2 - p} \varphi \Bigl(\frac{x - a}{\delta} \Bigr).
\]
We observe that, if \(\delta > 0\) is small enough, \(u_\delta^+ = u^+\).
By a direct computation, we have \(t_+ u_\delta^+ + t_- u_\delta^- \in \mathcal{N}_\mathrm{nod}\) if and only if 
\begin{equation}
\label{eqDegeneracySystem}
\left\{
\begin{aligned}
 (t_+^{2 - p} - t_+^{p} )\int_{\Rset^N} \bigl(I_\alpha \ast \abs{u}^p\bigr) \abs{u}^p
 &= t_-^p \delta^{N + \frac{2p}{2 - p}} J_\delta,\\
 t_-^{2 - p} \int_{\Rset^N} \abs{\nabla \varphi}^2 + \delta^2 \abs{\varphi}^2
  &=t_+^p J_\delta + t_-^{p} \delta^{\alpha + \frac{2 p}{2 - p}} \int_{\Rset^N} \bigl(I_\alpha \ast \abs{\varphi}^p\bigr) \abs{\varphi}^p,
\end{aligned}
\right.
\end{equation}
where
\[
J_\delta = \int_{\Rset^N} \bigl(I_\alpha \ast \abs{u}^p\bigr) (\delta z + a ) \bigl(\varphi (z)\bigr)^p\dif z.
\]
We observe that when \(\delta = 0\), the system reduces to 
\[
\left\{
\begin{aligned}
 (t_+^{2 - p} - t_+^{p} )\int_{\Rset^N} \bigl(I_\alpha \ast \abs{u}^p\bigr) \abs{u}^p
 &= 0,\\
 t_-^{2 - p} \int_{\Rset^N} \abs{\nabla \varphi}^2
  &=t_+^p \bigl(I_\alpha \ast \abs{u}^p\bigr) (a) \int_{\Rset^N} \abs{\varphi}^p,
\end{aligned}
\right.
\]
which has a unique solution. By the implicit function theorem, for \(\delta > 0\) small enough there exists \((t_{+, \delta}, t_{-, \delta}) \in (0, \infty)^2\) that solves the system \eqref{eqDegeneracySystem} and such that 
\[
 \lim_{\delta \to 0}(t_{+, \delta}, t_{-, \delta})
 =\biggl(1,  \Bigl( \bigl(I_\alpha \ast \abs{u}^p\bigr) (a) \int_{\Rset^N} 
\abs{\varphi}^p \Big/ \int_{\Rset^N} \abs{\nabla \varphi}^2\Bigr)^\frac{1}{2 - 
p}\biggr).
\]
Since \((N - 2)(2 - p) > -4\), we have \(t_+ u_\delta^+ + 
t_- u_\delta^- \to u\) strongly in \(H^1 (\Rset^N)\) as \(\delta \to 0\), and it follows 
that 
\[
 \inf_{\mathcal{N}_\mathrm{nod}} \mathcal{A} \le \mathcal{A} (u),
\]
and thus \(c_{\mathrm{nod}} \le c_0\).

We assume now that the function \(u \in \mathcal{N}_\mathrm{nod}\) minimizes 
the action functional \(\mathcal{A}\) on the nodal Nehari set 
\(\mathcal{N}_\mathrm{nod}\).
Since \(c_0 = c_{\mathrm{nod}}\), we deduce that \(u\) also minimizes \(\mathcal{A}\)
over the Nehari manifold \(\mathcal{N}_0\).
By the properties of such groundstates \cite{MorVanSchaft13}*{proposition 5.1}, either \(u^+ = 0\) or \(u^- = 0\), in contradiction with the assumption \(u \in \mathcal{N}_\mathrm{nod}\) and the definition of the Nehari nodal set \(\mathcal{N}_\mathrm{nod}\).
\end{proof}

\begin{bibdiv}

\begin{biblist}

\bib{Baernstein1994}{article}{
   author={Baernstein, Albert, II},
   title={A unified approach to symmetrization},
   conference={
      title={Partial differential equations of elliptic type},
      address={Cortona},
      date={1992},
   },
   book={
      series={Sympos. Math., XXXV},
      publisher={Cambridge Univ. Press}, 
      address={Cambridge},
   },
   date={1994},
   pages={47--91},
}

\bib{Bogachev2007}{book}{
   author={Bogachev, V. I.},
   title={Measure theory},
   publisher={Springer},
   place={Berlin},
   date={2007},
   isbn={978-3-540-34513-8},
   isbn={3-540-34513-2},
}

\bib{BrezisNirenberg1991}{article}{
   author={Brezis, Ha{\"{\i}}m},
   author={Nirenberg, Louis},
   title={Remarks on finding critical points},
   journal={Comm. Pure Appl. Math.},
   volume={44},
   date={1991},
   number={8-9},
   pages={939--963},
   issn={0010-3640},
}
\bib{BrockSolynin2000}{article}{
   author={Brock, Friedemann},
   author={Solynin, Alexander Yu.},
   title={An approach to symmetrization via polarization},
   journal={Trans. Amer. Math. Soc.},
   volume={352},
   date={2000},
   number={4},
   pages={1759--1796},
   issn={0002-9947},
}

\bib{CastorCossioNeuberger1998}{article}{
   author={Castro, Alfonso},
   author={Cossio, Jorge},
   author={Neuberger, John M.},
   title={A minmax principle, index of the critical point, and existence of
   sign-changing solutions to elliptic boundary value problems},
   journal={Electron. J. Differential Equations},
   volume={1998},
   date={1998},
   number={2},
   pages={18},
   issn={1072-6691},
}

\bib{CastroCossioNeuberger1997}{article}{
   author={Castro, Alfonso},
   author={Cossio, Jorge},
   author={Neuberger, John M.},
   title={A sign-changing solution for a superlinear Dirichlet problem},
   journal={Rocky Mountain J. Math.},
   volume={27},
   date={1997},
   number={4},
   pages={1041--1053},
   issn={0035-7596},
}

 \bib{CeramiSoliminiStruwe1986}{article}{
   author={Cerami, G.},
   author={Solimini, S.},
   author={Struwe, M.},
   title={Some existence results for superlinear elliptic boundary value
   problems involving critical exponents},
   journal={J. Funct. Anal.},
   volume={69},
   date={1986},
   number={3},
   pages={289--306},
   issn={0022-1236},
}

\bib{CingolaniClappSecchi2012}{article}{
   author={Cingolani, Silvia},
   author={Clapp, M{\'o}nica},
   author={Secchi, Simone},
   title={Multiple solutions to a magnetic nonlinear Choquard equation},
   journal={Z. Angew. Math. Phys.},
   volume={63},
   date={2012},
   number={2},
   pages={233--248},
   issn={0044-2275},
}

\bib{CingolaniClappSecchi2013}{article}{
   author={Cingolani, Silvia},
   author={Clapp, M{\'o}nica},
   author={Secchi, Simone},
   title={Intertwining semiclassical solutions to a Schr\"odinger-Newton
   system},
   journal={Discrete Contin. Dyn. Syst. Ser. S},
   volume={6},
   date={2013},
   number={4},
   pages={891--908},
   issn={1937-1632},
}

\bib{ClapSalazar2013}{article}{
   author={Clapp, M{\'o}nica},
   author={Salazar, Dora},
   title={Positive and sign changing solutions to a nonlinear Choquard
   equation},
   journal={J. Math. Anal. Appl.},
   volume={407},
   date={2013},
   number={1},
   pages={1--15},
   issn={0022-247X},
}

\bib{FurtadoMaiaMedeiros2008}{article}{
   author={Furtado, Marcelo F.},
   author={Maia, Liliane A.},
   author={Medeiros, Everaldo S.},
   title={Positive and nodal solutions for a nonlinear Schr\"odinger
   equation with indefinite potential},
   journal={Adv. Nonlinear Stud.},
   volume={8},
   date={2008},
   number={2},
   pages={353--373},
   issn={1536-1365},
}

\bib{GhimentiMorozVanSchaftingen}{article}{
   author = {Ghimenti, Marco},
   author = {Moroz, Vitaly},
   author={Van Schaftingen, Jean}, 
   title = {Least action nodal solutions for the quadratic Choquard equation}, 
   journal={submitted for publication},
   eprint={arXiv:1511.04779},
}

\bib{KRWJones1995gravitational}{article}{
  title={Gravitational self-energy as the litmus of reality},
  author={Jones, K. R. W.},
  journal={Modern Physics Letters A},
  volume={10},
  number={8},
  pages={657--668},
  year={1995},
  address={Singapore},
  publisher={World Scientific},
}

\bib{KRWJones1995newtonian}{article}{
  title={Newtonian quantum gravity},
  author={Jones, K. R. W.},
  journal={Australian Journal of Physics},
  volume={48},
  number={6},
  pages={1055--1081},
  year={1995},
}

\bib{Lieb1977}{article}{
   author={Lieb, Elliott H.},
   title={Existence and uniqueness of the minimizing solution of Choquard's
   nonlinear equation},
   journal={Studies in Appl. Math.},
   volume={57},
   date={1976/77},
   number={2},
   pages={93--105},
}

\bib{LiebLoss2001}{book}{
   author={Lieb, Elliott H.},
   author={Loss, Michael},
   title={Analysis},
   series={Graduate Studies in Mathematics},
   volume={14},
   edition={2},
   publisher={American Mathematical Society},
   place={Providence, RI},
   date={2001},
   pages={xxii+346},
   isbn={0-8218-2783-9},
}

\bib{Lions1980}{article}{
   author={Lions, Pierre-Louis},
   title={The Choquard equation and related questions},
   journal={Nonlinear Anal.},
   volume={4},
   date={1980},
   number={6},
   pages={1063--1072},
   issn={0362-546X},
}

\bib{Lions1984CC2}{article}{
   author={Lions, Pierre-Louis},
   title={The concentration-compactness principle in the calculus of
   variations. The locally compact case. II},
   journal={Ann. Inst. H. Poincar\'e Anal. Non Lin\'eaire},
   volume={1},
   date={1984},
   number={4},
   pages={223--283},
   issn={0294-1449},
}

\bib{MawhinWillem1989}{book}{
   author={Mawhin, Jean},
   author={Willem, Michel},
   title={Critical point theory and Hamiltonian systems},
   series={Applied Mathematical Sciences},
   volume={74},
   publisher={Springer}, 
   address={New York},
   date={1989},
   pages={xiv+277},
   isbn={0-387-96908-X},
}

\bib{Moroz-Penrose-Tod-1998}{article}{
   author={Moroz, Irene M.},
   author={Penrose, Roger},
   author={Tod, Paul},
   title={Spherically-symmetric solutions of the Schr\"odinger-Newton
   equations},
   journal={Classical Quantum Gravity},
   volume={15},
   date={1998},
   number={9},
   pages={2733--2742},
   issn={0264-9381},
}

\bib{MorVanSchaft13}{article}{
   author={Moroz, Vitaly},
   author={Van Schaftingen, Jean},
   title={Groundstates of nonlinear Choquard equations: existence, qualitative 
properties and decay asymptotics},
   journal={J. Funct. Anal.},
   volume={265},
   date={2013},
   pages={153--184},
}

\bib{MorVanSchaft15}{article}{
   author={Moroz, Vitaly},
   author={Van Schaftingen, Jean},
   title={Existence of groundstates for a class of nonlinear Choquard
   equations},
   journal={Trans. Amer. Math. Soc.},
   volume={367},
   date={2015},
   number={9},
   pages={6557--6579},
   issn={0002-9947},
}

\bib{Palais1979}{article}{
   author={Palais, Richard S.},
   title={The principle of symmetric criticality},
   journal={Comm. Math. Phys.},
   volume={69},
   date={1979},
   number={1},
   pages={19--30},
   issn={0010-3616},
}

\bib{Pekar1954}{book}{
   author={Pekar, S.},
   title={Untersuchung {\"u}ber die Elektronentheorie der Kristalle},
   publisher={Akademie Verlag},
   place={Berlin},
   date={1954},
   pages={184},
}

\bib{Schechter2004}{book}{
   author={Schechter, Martin},
   title={An introduction to nonlinear analysis},
   series={Cambridge Studies in Advanced Mathematics},
   volume={95},
   publisher={Cambridge Univ. Press},
   address={Cambridge},
   date={2004},
   pages={xviii+357},
   isbn={0-521-84397-9},
}

\bib{Schechter2009}{book}{
   author={Schechter, Martin},
   title={Minimax systems and critical point theory},
   publisher={Birkh\"auser}, 
   address={Boston, Mass.},
   date={2009},
   pages={xiv+241},
   isbn={978-0-8176-4805-3},
}

\bib{VanSchaftingen2014}{article}{
   author={Van Schaftingen, Jean},
   title={Interpolation inequalities between Sobolev and Morrey--Campanato
   spaces: A common gateway to concentration-compactness and
   Gagliardo--Nirenberg interpolation inequalities},
   journal={Port. Math.},
   volume={71},
   date={2014},
   number={3-4},
   pages={159--175},
   issn={0032-5155},
}
\bib{VanSchaftingenWillem2004}{article}{
   author={Van Schaftingen, Jean},
   author={Willem, Michel},
   title={Set transformations, symmetrizations and isoperimetric
   inequalities},
   conference={
      title={Nonlinear analysis and applications to physical sciences},
      address={Pistoia}, 
      date={May 2002},
   },
   book={
      editor={Benci, V.},
      editor={Masiello, A.},
      publisher={Springer Italia}, 
      address={Milan},
   },
   date={2004},
   pages={135--152},
}

\bib{VanSchaftingenWillem2008}{article}{
   author={Van Schaftingen, Jean},
   author={Willem, Michel},
   title={Symmetry of solutions of semilinear elliptic problems},
   journal={J. Eur. Math. Soc. (JEMS)},
   volume={10},
   date={2008},
   number={2},
   pages={439--456},
   issn={1435-9855},
}

\bib{Willem1996}{book}{
   author={Willem, Michel},
   title={Minimax theorems},
   series={Progress in Nonlinear Differential Equations and their
   Applications, 24},
   publisher={Birkh\"auser}, 
   address={Boston, Mass.},
   date={1996},
   pages={x+162},
   isbn={0-8176-3913-6},
}

\bib{Willem2013}{book}{
  author = {Willem, Michel},
  title = {Functional analysis},
  subtitle = {Fundamentals and applications},
  series={Cornerstones},
  publisher = {Birkh\"auser},
  place = {Basel},
  volume = {XIV},
  pages = {213},
  date={2013},
}

\end{biblist}

\end{bibdiv}

\end{document}